\documentclass[12pt,reqno]{amsart}
\usepackage{graphicx}
\usepackage{subcaption}
\usepackage[utf8]{inputenc}
\usepackage{soul,cancel}
 \usepackage[normalem]{ulem}
\usepackage{color}
\newcommand{\Ell}{\mathscr{L}}
\usepackage[titletoc,title]{appendix}
\usepackage[numbers]{natbib}
\usepackage[T1]{fontenc}
\usepackage[all]{xy}
\usepackage{comment}
\usepackage{amsmath,amssymb,stmaryrd}
\usepackage[british]{babel}
\usepackage{a4wide}
\usepackage[sans]{dsfont}
\usepackage{amsfonts}
\usepackage[mathscr]{euscript}
\usepackage{mathrsfs}
\usepackage{latexsym}
\usepackage{setspace}
\usepackage{amsthm,amsmath}
\usepackage[a4paper,margin=3cm,right=5cm]{geometry}
\usepackage[textsize=scriptsize]{todonotes}
\newtheorem{theorem}{Theorem}[section]
\newtheorem{corollary}{Corollary}[section]

\newtheorem{proposition}{Proposition}[section]
\theoremstyle{definition}
\newtheorem{definition}{Definition}[section]
\theoremstyle{remark}
\newtheorem{remark}{Remark}[section]
\theoremstyle{remark}
\newtheorem{example}{Example}
\theoremstyle{remark}
\newtheorem{assumption}{Assumption}[section]

\numberwithin{equation}{section}

\usepackage{array}
\usepackage{xcolor}
\usepackage{url}
\usepackage{hyperref}
\usepackage{graphicx}
\usepackage{geometry}
\usepackage{enumerate}
\usepackage{empheq}
\usepackage{bbm}
\usepackage{tikz}
\usetikzlibrary{arrows.meta,positioning}

\newcommand{\nc}{\newcommand}

\newcommand{\E}{\mathbb E}
\newcommand{\bN}{\mathbb N}

\renewcommand{\P}{\mathbb P}

\newcommand{\R}{\mathbb R}

\newcommand{\Exp}{\textrm{Exp}}
\newcommand{\Kummer}{_1F_1}
\newcommand{\ind}{\mathds{1}}

\newcommand{\cF}{\mathcal F}

\newcommand{\cS}{\mathcal S}
\newcommand{\minus}{\textrm{-}}
\newcommand{\lambdao}{{\lambda^o}}
\newcommand{\eps}{\varepsilon}
\newcommand{\be}{
\begin{equation}}
  \newcommand{\ee}{
\end{equation}}
\newcommand{\hn}[1]{\begingroup\color{gray}#1\endgroup}

\newenvironment{ber}{\color{violet}}{}
\nc{\bb}{
\begin{ber}}
  \nc{\eb}{
\end{ber}}

\newenvironment{gio}{\color{magenta}}{}
\nc{\bg}{
\begin{gio}}
  \nc{\eg}{
\end{gio}}

\newenvironment{uc}{\color{blue}}{}
\nc{\bu}{
\begin{uc}}
  \nc{\eu}{
\end{uc}}
\title[Markov processes with resetting and cyber security]{A class of Markov processes with resetting and applications to cybersecurity}
\author{Giorgia Callegaro, Umut \c{C}et\.{i}n, Bernardo D'Auria}
\date{\today}

\begin{document}

\maketitle
\begin{abstract}
We introduce a class of piecewise deterministic Markov processes with resetting, motivated by self-exciting models of cyber attacks. Under assumptions reminiscent of ruin theory, we prove the existence and uniqueness of an invariant distribution and derive its density explicitly, thereby establishing ergodicity of the process. We then formulate an associated long-run average control problem in which resetting acts as the intervention mechanism. For exponentially distributed jump sizes, the model becomes analytically tractable, allowing an explicit characterization of the invariant distribution and of the optimal intervention policy.
\end{abstract}

\section{Introduction}\label{sec:introduction}
Cyber attacks have emerged as one of the most consequential operational risks 
facing modern organisations. The scale of the threat is substantial and growing: 
the IBM Cost of a Data Breach Report, based on an analysis of 600 breached 
organisations across 17 industries, found that the global average cost of a 
data breach reached \$4.44 million in 2025, while in the United States 
the figure rose to a record \$10.22 million \cite{IBMBreach2025}. 
At a national level, the UK National Cyber Security Centre reported 
a 50\% year-on-year increase in highly significant incidents for 
the third consecutive year \cite{NCSC2025}, 
and the World Economic Forum's Global Cybersecurity Outlook~2026 
identified AI-related vulnerabilities as the fastest-growing cyber risk, 
with 87\% of surveyed organisations reporting such threats over the course 
of 2025 \cite{WEF2026}. 
These developments have prompted significant policy responses: 
the UK Government published its Cyber Action Plan in January 2026, 
backed by \pounds210 million of central investment \cite{GCAP2026}, 
and introduced the Cyber Security and Resilience Bill to Parliament 
to strengthen regulatory requirements on critical infrastructure providers \cite{CSRBill2025}. 

Against this backdrop,  we develop a novel probabilistic 
framework for the optimal allocation of cybersecurity resources --- 
encompassing both the choice of detection technology and the 
operational policy governing when to intervene --- for an entity 
subject to cyber attacks. This problem is distinct from 
the questions addressed in the existing literature, which has 
concentrated on optimising a continuous investment rate to reduce 
breach probability or on pricing and reserving for cyber-insurance 
contracts, as we discuss in more detail below. 
Our approach is built on a new class of piecewise-deterministic 
Markov processes (PDMP) with endogenous resetting that does not fall into the classical PDMP paradigm of Davis 
\cite{Davis1984, Davis1993} due to the behavior at the boundary and the resetting mechanism, 
and is also fundamentally different from the stochastic resetting 
framework of the mathematical physics literature (see, e.g.,  Evans and Majumdar 
\cite{EvansMajumdar2011, EvansMajumdarSchehr2020}), 
where resetting occurs at exogenous times. 
In our construction, resetting arises endogenously from the dynamics of the attack process reaching a threshold  that is itself the object of optimisation.

Our model incorporates several stylised facts that are 
well-documented in the empirical cybersecurity literature: 
attacks arrive unpredictably with a time-varying frequency --- 
for instance, the Verizon 2025 Data Breach Investigations Report, 
analysing over 22,000 security incidents worldwide, found a 37\% 
year-on-year increase in ransomware-related breaches 
\cite{VerizonDBIR2025}; 
the occurrence of one attack raises the likelihood of subsequent 
attacks in the near future, a phenomenon known as temporal clustering 
or contagion; and the economic losses associated with individual 
attacks are random and potentially very large.

The attack clustering has been documented 
empirically by Baldwin et al.~\cite{Baldwin2017} using data on 
threats to key internet services, and has motivated the use of 
Hawkes processes and related self-exciting point process models 
for describing attack arrivals; see Bessy-Roland et 
al.~\cite{BessyRoland2021} for a multivariate Hawkes framework 
calibrated to US breach data, and the comprehensive survey by 
Awiszus et al.~\cite{Awiszus2023}.

A central question for any entity subject to cyber attacks is how to choose a cybersecurity strategy 
that optimally balances the cost of protective technology against the long-run financial losses from breaches. 
The seminal work of Gordon and Loeb \cite{GordonLoeb2002} introduced an economic framework for this problem 
in a static, single-period setting, showing that optimal security investment should generally not exceed 
a fraction $1/e$ of the expected loss. 
Subsequent works have enriched this framework in various directions: 
Skeoch \cite{Skeoch2021} extends the Gordon--Loeb model to incorporate cyber insurance, 
while Callegaro et al.\ \cite{CFHO2025} develop a continuous-time stochastic extension using a Hawkes process 
for attack arrivals and solve the resulting optimal investment problem via dynamic programming.
Hawkes processes and related point process models have also been used 
in cyber insurance pricing; see Fahrenwaldt, Weber and Weske 
\cite{FahrenwaldtWeberWeske2018} for a network-based approach, 
Bessy-Roland et al.~\cite{BessyRoland2021} for a multivariate Hawkes 
framework applied to cyber insurance, Hillairet and Lopez 
\cite{HillairetLopez2021} and Hillairet et al.~\cite{HLOS2022} 
for counting process and contagion models of cyber risk in insurance portfolios, 
Hillairet, R\'eveillac and 
Rosenbaum \cite{HRR2023} for expansion formulas for Hawkes-driven 
cyber insurance derivatives, and the surveys by 
Awiszus et al.~\cite{Awiszus2023} and He, Jin and Li~\cite{HeJinLi2024}.

In our model, a firm's {\em cybersecurity posture} is characterised by an 
intervention threshold $A > 0$. This parameter reflects both the 
detection capability of the deployed technology and the operational 
policy governing when to respond: a lower threshold corresponds to 
a more sensitive or aggressively configured system that detects and 
responds to attacks at lower intensity levels, while a higher 
threshold represents a less sensitive system or a more permissive 
policy that allows attacks to accumulate before triggering a response. 
The cost $\eta(A)$ of maintaining a given posture is a decreasing convex 
function of $A$, capturing the fact that finer detection and earlier 
intervention require greater investment. 
The firm's objective is to choose the threshold $A$ that minimises the sum of the long-run average cost 
of cumulative losses and the cost of the intervention posture. 
Once the optimal threshold is determined, the resulting stationary risk profile 
provides the natural input for any subsequent insurance pricing or risk transfer analysis.
 
The mathematical framework that we develop to address this question is a new class of 
piecewise-deterministic Markov processes (PDMPs) with \emph{endogenous resetting}. 
The state of the system is described by the triplet $X = (N, \Lambda, C)$, 
where $N$ counts the number of attacks since the last intervention, $\Lambda$ is the stochastic intensity of the attack process, 
and $C$ is the cumulative financial loss. 
Between attacks, the intensity evolves deterministically according to a mean-reverting flow 
with speed $\alpha \geq 0$ and level $\beta > 0$. 
Each attack causes a random jump in the intensity whose size depends on both 
the number of previous attacks and the associated economic damage, 
thereby producing a self-exciting dynamics. 
The key feature of our construction is the resetting mechanism: 
when the intensity $\Lambda$ exceeds the threshold $A$, 
an intervention occurs and the intensity is immediately reset to a baseline level $\lambdao \leq \beta$. 
Crucially, this resetting is \emph{endogenous} --- it is triggered by the process itself reaching a boundary, 
not by an exogenous Poissonian clock as in the stochastic resetting framework 
of Evans and Majumdar \cite{EvansMajumdar2011, EvansMajumdarSchehr2020}. 
As a consequence, the reset times are stopping times for the natural filtration of $X$, 
and the process regenerates at each reset, 
a property that we exploit systematically in the analysis.

The construction of $X$ and the rigorous definition of its dynamics are carried out in Section~\ref{sec:construction} 
via the infinitesimal generator. 
Using the Hille--Yosida theorem, we show in Theorem~\ref{thm:hilleyosida} that the formal  generator is closable and its closure generates a strongly continuous contraction semigroup, thereby establishing the existence  of a PDMP with given infinitesimal characteristics. 
The classical theory of PDMPs was introduced by Davis \cite{Davis1984} 
and further developed in Davis \cite{Davis1993} and subsequent literature (see, e.g.,   Jacobsen \cite{Jacobsen2006} for a recent account of the general theory); 
however, the specific structure considered here --- a self-exciting counting process 
whose intensity is subject to endogenous resetting at a boundary --- 
does not fall within the scope of the existing theory and requires a dedicated analysis (see Remark \ref{r:pdmp} for more details). 
 
 
A natural question is whether the process resets in finite time, 
i.e.,\ whether the intensity ever reaches the level $A$. 
The answer is fundamental to the firm's optimisation problem 
since the ergodic behaviour of $\Lambda$ --- and hence the long-run 
average cost that the firm seeks to minimise --- depends critically 
on whether and how frequently these resets occur.
Section~\ref{sec:resetting} provides necessary and sufficient conditions for this. 
In the case $\alpha = 0$ (no mean reversion), Theorem~\ref{t:alph0finreset} gives a complete characterisation 
via Kolmogorov's three-series theorem applied to the jump sizes of the intensity. 
For $\alpha > 0$, Theorem~\ref{t:harris} establishes that the expected reset time $\mathbb{E}^{\alpha,x}[T_A]$ is finite 
under a condition involving the moment generating functions of the jump sizes 
and a bounded function $r$ satisfying certain re\-cur\-si\-ve inequalities. 
The condition is reminiscent of the classical Cram\'er--Lundberg condition in ruin theory; 
one  key difference is that the jumps in the intensity are state-dependent due to the self-exciting structure.

 One of the principal contributions of this paper is to establish the 
existence of a stationary measure for the process $Y = (N, \Lambda)$ 
and to compute it explicitly. General results in the PDMP literature 
provide abstract criteria for the existence of invariant measures --- 
typically Foster-Lyapunov conditions for an embedded post-jump chain 
(see Dufour and Costa \cite{DufourCosta1999, DufourCosta2008}) --- 
but constructing a suitable Lyapunov function is itself a non-trivial 
model-specific task 
(for a notable case where such a construction is feasible, 
see Chafa\"i, Malrieu and Paroux \cite{ChafaiMalrieuParoux2010} 
in the context of the TCP window-size process), 
and even when existence is established, 
the computation of the stationary density requires a separate analysis. 
In the present paper, the endogenous resetting mechanism provides 
a natural regeneration structure that we exploit, 
via the theory of Harris recurrence \cite{KaspiMandelbaum1994}, 
to establish both existence and uniqueness of the invariant measure 
and to compute its density explicitly. 
This is achieved in full generality under mild assumptions 
on the model parameters, without recourse to Lyapunov functions.

Section~\ref{sec:stationary} is devoted to the stationary measures of the two-dimensional process $Y = (N, \Lambda)$. 
Since the cumulative cost $C$ is increasing, and typically explosive in the limit, the full process $X$ does not possess an invariant measure, 
but the pair $Y$ does under the assumption that the reset time is almost surely finite. 
We appeal to the theory of Harris recurrence for continuous-time Markov processes, following Kaspi and Mandelbaum \cite{KaspiMandelbaum1994}, 
to establish that $Y$ is Harris recurrent with a unique (up to scaling) invariant measure $\Pi$.
 
We compute the invariant measure explicitly by deriving a system of 
ordinary differential equations. The invariant measure $\Pi(0, \cdot)$ for the intensity 
at reset is obtained in explicit form, 
and the measures $\Pi(n, \cdot)$ for $n \geq 1$ are then determined 
recursively via integral expressions involving $\Pi(n-1, \cdot)$. 
The structure of the solution depends on the relationship between 
the mean reversion parameters $\alpha$, $\beta$ and the reset level $\lambda^o$, 
and we treat the relevant cases separately. 
A notable feature of the case $\alpha = 0$ is that the invariant measure 
admits a particularly clean representation in terms of convolutions 
of the jump size distributions, which we also verify via an alternative 
proof based on the embedded Markov chain of $Y$.
 
Section~\ref{sec:exponential} develops a tractable special case in which the jumps in the intensity 
are exponentially distributed, $\lambda^o = \beta$ and $\alpha>0$. 
In this setting, we derive a closed-form expression for the density of the stationary distribution (Theorem~\eqref{thm:stat.dist.alpha>0.beta=lambda0.exp}), which allows the long-run average intensity to be computed in closed form as a function of intervention threshold.  
 
 
The model considered in this paper is related to several strands of the literature. 
The self-exciting structure of the intensity is reminiscent of Hawkes processes, 
which were introduced by Hawkes \cite{Hawkes1971} and have been applied extensively 
in seismology, finance, and more recently in cyber risk modelling. 
However, our process is typically not a Hawkes process: 
the jumps in the intensity depend on both the current state $N$ and the marks $Z_i$, 
and the resetting mechanism has no counterpart in the standard Hawkes framework. 
The resetting is also fundamentally different from the stochastic resetting studied 
in the physics literature following Evans and Majumdar \cite{EvansMajumdar2011}: 
in that framework, resetting occurs at exogenous Poissonian times and the focus is 
on the resulting nonequilibrium stationary state and the optimisation of first passage times. 
In our model, resetting is endogenous and state-dependent, 
occurring precisely when the attack intensity exceeds the intervention threshold 
dictated by the firm's chosen cybersecurity posture.

On the applied side, the closest work to ours is Callegaro et al.\ \cite{CFHO2025}, 
which also uses self-exciting dynamics for cyber attack arrivals and solves an optimisation problem. 
However, the nature of the control is different: 
in \cite{CFHO2025}, following the Gordon--Loeb tradition, 
the control variable is a continuous investment rate that reduces the breach probability, 
and the problem is solved via dynamic programming for a finite-horizon objective. 
In the present paper, the control is a \emph{structural parameter} of the system --- 
the detection threshold --- and the objective is an ergodic (long-run average) criterion. 
The probabilistic analysis required to evaluate this criterion 
(construction of the PDMP, stationary measures, ergodic limits) 
constitutes the main mathematical contribution of the paper. 

The outline of the paper is as follows. 
Section~\ref{sec:SMCR} introduces the model and constructs the PDMP via its infinitesimal generator. 
Section~\ref{sec:resetting} gives conditions for the finiteness of the reset time and its expectation. 
Section~\ref{sec:stationary} computes the stationary measures of the model under the reset-threshold policies. 
Section~\ref{sec:exponential} develops formulates and solves a control problem about the tractable case of exponentially distributed damages.

\section{Self-exciting PDMP with resetting: construction via infinitesimal generator}\label{sec:SMCR}
Our aim in this section is the construction of a piecewise-deterministic Markov process (PDMP) in continuous time that will provide the foundation for the study of cyber-risk mitigation that we shall consider in this paper.

At the heart of our construction lies a counting process ${(N_t^0)}_{t \ge 0}$ with stochastic intensity ${(\Lambda_t^0)}_{t \ge 0}$, which are both defined on a probability space $(\Omega, \mathcal F, \mathbb P)$. Moreover, there will be  a sequence of i.i.d. random variables $(Z_i)_{i\ge 1}$ taking values in $[0,\infty)$ with the common distribution function $G$. We shall assume that the $Z_i$'s have finite second moment and set $\mu_Z:=\int y \,dG(y)$ and $\sigma^2_Z:=\int (y-\mu_Z)^2 \,dG(y)$. We denote the tail distribution by $\bar G(y):=1-G(y)$ and define $z^*:=\inf\{y:G(y)>0\}$.

The attacks arrive at random times $(T_n)_{n\ge 1}$ that are the jump times of a counting process $N^0$. Its associated  intensity process is a solution of the following stochastic differential equation:
\be
\label{eq:lambda.noreset}
d\Lambda_t^0 = \alpha(\beta-\Lambda^0_t)dt + \ell(N_t^0, Z_{N_t^0})\Delta N_t^0, \quad \Lambda_0^0 >0
\ee
where $\ell$ is a strictly positive function such that for each $n$, $z \mapsto \ell(n, z)$ is non-decreasing and continuous and such that $\mathbb P(\ell(n, Z_n) >0) =1$. We set 
\be\label{def.ell_n}
\ell_n := \ell(n, z^*) \ge 0.
\ee

In above, $\beta>0$ is the mean reversion level and $\alpha\geq 0$ is the speed of mean reversion. Clearly, when $\alpha=0$, there is no mean reversion and, consequently, $\Lambda^0$ is constant between jumps of $N^0$.   A simple integration by parts shows the solution $\Lambda^0$  of \eqref{eq:lambda.noreset} is given by
\begin{equation}
  \label{eq:noreset2}
  \Lambda_t^0 = \beta + (\Lambda_0^0 - \beta)  e^{-\alpha t} + \sum_{j=1}^{N_t^0} e^{-\alpha (t - T_j)} \ell(j,Z_j)
\end{equation}
\begin{remark}
a) If $\ell (j,z)=\ell(j)$ and $\alpha=0$, the intensity is an increasing function of~$N^0$. Indeed, in this case $\Lambda_t^0 =h(N_t^0)$, where $h(0)=\Lambda_0^0$ and $h(j)=h(j-1)+\ell(j)$ for $j\geq 1$.

b) If $\ell(j,z)=\ell(z)$, the size of the jumps in the intensity does not depend on~$N^0$ and the resulting counting process is a Hawkes process.
\end{remark}

The marks $(Z_i)_{i\ge 1}$ model the damage to the system associated with each cyber attack. Thus, the cumulative cost of the attacks is given by
\be \label{def:C^0}
C^0_t= \sum_{i=1}^{N_t^0} Z_i.
\ee
A {\em reset} occurs when the intensity of the attacks goes beyond a predetermined level $A>0$. At that point in time, an intervention takes place and  the intensity immediately decreases to $\lambdao\in (0,\beta]$.
Calling $N$ the counting process that is reset to $0$ at each reset time, the reset intensity process, $\Lambda$,  follows the same dynamics of the non-reset process $\Lambda^0$, given in \eqref{eq:lambda.noreset}, between two such instants and it is solution of the following SDE
\begin{equation}
  \label{eq:reset2}
  d\Lambda_t= \alpha(\beta-\Lambda_t)dt + \ell(N_t,Z_{N_t})\Delta N_t.
\end{equation}
The process that traces the total amount of looses, $C$, is not reset at the reset epochs, as it continues to accumulate them. Nevertheless, it is affected by the resets in the sense that at each reset time the intensity of the attacks is reduced implying a reduced rate at which new damages accumulate. 

\begin{remark}
If $\alpha=0$ or $\lambdao=\beta$, $\Lambda$ remains constant after reset until the successive jump.
\end{remark}


Before constructing and studying the Markov process, $X:=(N, \Lambda, C)$,  with resetting, let us provide one motivating example.

\subsection{Motivating example}\label{sec:motivating.example}
Here we consider a firm subject to self-exciting cyber attacks, where  $\alpha=0$ and the intensity $\Lambda_t^{0}=h(N^0_t)$ with $h$ positive and strictly increasing.   This specification, in particular independence of $\Lambda$ from $Z_i$s will allow us to present the main motivations of the paper in a tractable setting with minimal technicalities.


Suppose that this firm is considering investing in a cyber-security technology that intervenes when the intensity of attacks surpasses a fixed threshold $A$. In other words, the security tool cannot (or will not for operational reasons)  detect harmful activity unless the attacks reach a certain frequency.   Since the firm is concerned with long-term risk management, let us suppose that the intensity of attacks is reduced immediately to $\lambdao=h(0)$ at the time of intervention. The long-run average cost of this technology to the firm is given by $\eta(A)$ for some decreasing and convex function $\eta$.

With cyber-security risk mitigation in mind, the objective of the firm is to choose the optimal {\em cybersecurity posture}, i.e., the intervention level $A$, so that the cumulative reported losses on average is minimized together with the cost of the cyber-security technology. More precisely, the firm's objective is to determine the optimal value function, defined as:
\begin{equation}
\label{eq:problem}
V := \inf_{A \in \R_+} J(A)
\end{equation}
where $J(A) := \eta(A) + \lim_{t \rightarrow +\infty}  (C_t/t)$, were $C$ is the total cost under the resetting discipline at level $A>0$.

Denoting by $N$, the process that counts the successful attacks since the last reset time, we have that it is a Markov chain with state space $\{0, 1, \ldots, n^*_A-1\}$, where $n^*_A:= \min\{n\in \mathbb{Z}: h(n)\geq A\}$. 
It is easy to see that $N$ admits a stationary distribution $\pi_A$ and the above control problem can be cast in terms of $\pi_A$. Indeed, Corollary \ref{c:Climit} below shows that
\begin{equation}\label{e:limCex}
\lim_{t \rightarrow +\infty}  \frac{C_t}{t}=\mu_Z \sum_{n=0}^{n^*_A-1}h(n)\pi_A(n).
\end{equation}
The non-null transition rates of $N$ are given by
\[\begin{split}
q(n,n+1) &= h(n), \qquad n\in \{0,1,\ldots, n^*_A-2\} \ ,\\
q(n,0) &= h(n^*_A-1) .
\end{split}\]
By using the local balance equations, it is easy to see that for $n \le n^*_A-1$, the stationary distribution $\pi_A$ satisfies the following relation
\begin{equation}\label{eq:pi_n}
\pi_A(n)=\pi_A(0) \frac{h(0)}{h(n)}.
\end{equation}
and by the normalizing condition we deduce that 
\begin{equation}\label{eq:pi_0}
\pi_A(0)=\left(\sum_{n=0}^{n^*_A-1}\frac{h(0)}{h(n)}\right)^{-1} .
\end{equation}
By substituting the above expression in \eqref{e:limCex}, we have
\[
\lim_{t \rightarrow +\infty}  \frac{C_t}{t}=\mu_Z \pi_A(0)h(0)n^*_A=\frac{\mu_Z n^*_A}{\sum_{n=0}^{n^*_A-1}h^{-1}(n)}
=\frac{\mu_Z}{\E[h^{-1}(I(n^*_A))]},\]
where we denoted by $I(n)$ a random variable uniformly distributed on the discrete set $\{0, 1,\ldots, n-1\}$.  Also note that $\E[I(n)]$ is decreasing in $n$ for $n\geq 1$, because the $I(n)$ are decreasing in stochastic order as a function of $n$ and $h$ is increasing.
Thus, the optimization problem of the firm becomes
\[
\inf_{A \in \R_+} \bigg(\eta(A) + \frac{\mu_Z n^*_A}{\sum_{n=0}^{n^*_A-1} h^{-1}(n)}\bigg),
\]
where the optimiser aims to find a balance between the increasing long-term costs due to security breaches and the decreasing technology expenditures to reduce associated damages - as functions of $A$.

\subsection{The generator of the Markov process \texorpdfstring{$X$}{X}}\label{sec:construction}

To define $X$ uniquely, we proceed by computing its generator $Q^\alpha$. To this end, let $f$ be a bounded and sufficiently smooth test function and  $x=(n,\lambda,z)$. Noting that the probability of the counting process having more than one jump is $o(h)$, we have
\begin{align}
\E[f(X_h)|X_0=x]&= o(h)+ f(n,\lambda +\alpha (\beta-\lambda)h,z) (1-\lambda h) \nonumber \\
&+\lambda h \E[f(n+1,\lambda +\ell(n+1,Z_1), z+Z_1); \lambda +\ell(n+1,Z_1)<A] \nonumber  \\
&+\lambda h \E[f(0,\lambdao, z+Z_1); \lambda +\ell(n+1,Z_1)\ge  A] \nonumber  \\
&=o(h)+ \big(f(x) + \alpha (\beta-\lambda)h f_\lambda(n,\lambda ,z)\big) (1-\lambda h) \nonumber \\
&+\lambda h \E[f(n+1,\lambda +\ell(n+1,Z_1), z+Z_1); \lambda +\ell(n+1,Z_1)< A] \nonumber  \\
&+\lambda h \E[f(0,\lambdao, z+Z_1); \lambda +\ell(n+1,Z_1)\ge A],
\label{eq:E[F(X)]}
\end{align}
where $f_\lambda$ is the partial derivative of $f$ with respect to $\lambda$. Note that the last line in Eq. \eqref{eq:E[F(X)]} describes the state of the process at time of reset when a jump occurs, in which case the process counting the attacks is reset to zero, and the next to last considers the case when the jump is not big enough to pass the threshold $A$. Thus, we can define the generator $Q^\alpha$ by considering
\[
Q^\alpha f(x):=\lim_{h \to 0}\frac{\E[f(X_h)|X_0=x]-f(x)}{h}.
\]

The above calculations lead to
\be \label{e:Qalpha}
\begin{split}
Q^\alpha f(x)&= -\lambda f(x) + \alpha(\beta-\lambda)f_\lambda (x)\\
&+\lambda \int\ind_{\{\ell(n+1,y)<A-\lambda\}} \, f(n+1,\lambda +\ell(n+1,y), z+y) \,dG(y)\\
&+\lambda \int\ind_{\{\ell(n+1,y)\ge A-\lambda\}} \, f(0,\lambdao, z+y) \,dG(y).
\end{split}
\ee
Recall that $G$ is the distribution of the $Z_i$'s. We shall write $Q$ when $\alpha=0$ to ease the notation.

Consider $S:=(\bN \cup\{0\}, [\lambdao,A), [0,\infty))$ and $\bar{S}:=(\bN \cup\{0\}, [\lambdao,A], [0,\infty))$ endowed with their natural topologies. Let $\widehat{C}(\bar{S})$  be the space of continuous functions on $\bar{S}$ that are vanishing at infinity and $\widehat{C}({S}):=\{f|_S: f\in \widehat{C}(\bar{S})\}$. Note that $\widehat{C}(\bar{S})$ is a Banach space when endowed with the sup-norm. We shall also denote the subspaces of functions that are continuously differentiable with respect to the second variable by $\widehat{C}_1(\bar{S})$  and $\widehat{C}_1({S})$, where the derivative at the boundaries  $\{\lambdao\}$ and $\{A\}$ are defined to be the right or left derivatives. The next theorem shows the existence of a strongly continuous contraction semigroup associated with the above generator, which will allow us to rigorously define the process $X$ as a Markov process.

\begin{remark}\label{r:pdmp}
   A fundamental difference between the PDMP constructed in this paper 
and those in the existing theory pioneered by Davis \cite{Davis1984} 
lies in the boundary behaviour. In the classical framework of 
\cite{Davis1984} and the literature that builds upon it, 
transitions are permitted within the interior of the state space 
as well as between interior points and boundary points, 
but not from one boundary point to another. 
Our construction, however, requires that $\Lambda$ be instantaneously 
sent to the boundary point $\lambda^o$ upon reaching the boundary 
point $A$. To the best of our knowledge, this type of 
boundary-to-boundary transitions has not been treated in the existing 
PDMP literature.
\end{remark}

\begin{theorem}\label{thm:hilleyosida}
$Q^\alpha$ with domain $\mathcal{D}(Q^\alpha):=\widehat{C}_1({S})$  is a closable linear operator on $\widehat{C}({S})$ and its closure is the infinitesimal generator of a strongly continuous contraction semigroup $(P_t)_{t\geq 0}$ on $\widehat{C}({S})$.
\end{theorem}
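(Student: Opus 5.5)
We plan to verify the hypotheses of the Hille--Yosida theorem in the Lumer--Phillips form. We will show that $Q^\alpha$ on $\mathcal D(Q^\alpha)=\widehat C_1(S)$ is densely defined, dissipative, and that $\mu-Q^\alpha$ has dense range for some (hence all) $\mu>0$. Closability then follows automatically, since densely defined dissipative operators are closable. Hille--Yosida applied to the closure then gives the contraction semigroup.

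\textbf{Density.} Density of $\widehat C_1(S)$ in $\widehat C(S)$ is a Stone--Weierstrass type argument. Since $\bN\cup\{0\}$ is discrete, we can approximate an element of $\widehat C(S)$ uniformly by functions that are smooth in $\lambda$. To do so, mollify in $\lambda$ after extending by reflection at the endpoints $\lambdao$ and $A$, and mollify in $z$ as well if needed.

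\textbf{Dissipativity.} It suffices to check the positive maximum principle. Let $f\in\widehat C_1(S)$ attain a nonnegative maximum at $x_0=(n,\lambda,z)$. The jump part satisfies
\[
-\lambda f(x_0)+\lambda\int\bigl(\cdots\bigr)\,dG\le -\lambda f(x_0)+\lambda f(x_0)=0,
\]
because the two indicator integrals together integrate $f$ against a probability measure, at points where $f\le f(x_0)$.

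The drift term needs care at the boundary. Since $\lambdao\le\beta$, the flow points inward at $\lambda=\lambdao$, so there $\alpha(\beta-\lambdao)\ge0$. A maximum at the left endpoint forces $f_\lambda\le 0$ (right derivative), hence $\alpha(\beta-\lambdao)f_\lambda\le0$. At interior points $f_\lambda=0$.

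At $\lambda=A$ the question is subtle. Points with $\lambda=A$ are excluded from $S$, but functions in $\widehat C(S)$ are restrictions of continuous functions on $\bar S$, so $f$ extends continuously and the supremum may be approached as $\lambda\uparrow A$. If $\beta<A$, the drift there is $\alpha(\beta-A)<0$, and a maximum approached from the left gives $f_\lambda\ge0$ in the limit, so the drift term is again $\le 0$. If $\beta\ge A$, the deterministic flow itself reaches $A$ and must trigger a reset. In that case I would note that the domain must encode the boundary condition $f(n,A,z)=f(0,\lambdao,z)$, which I expect is implicit (or should be imposed) in $\widehat C_1(S)$. A correct version of the dissipativity argument uses this matching condition, and I would state it explicitly if needed.

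\textbf{Range condition.} This is the main obstacle. Given $g$ in a dense class, I would solve $\mu f-Q^\alpha f=g$ by a fixed-point or resolvent construction along the flow. Let $\phi_t(\lambda)=\beta+(\lambda-\beta)e^{-\alpha t}$ and write $\tau(\lambda)$ for the time the flow hits $A$ ($+\infty$ if it never does). Then set
\[
f(x)=\int_0^{\tau}e^{-\int_0^t(\mu+\phi_s)ds}\Bigl(g+\phi_t\,Kf\Bigr)(n,\phi_t(\lambda),z)\,dt+e^{-\int_0^\tau(\mu+\phi_s)ds}f(0,\lambdao,z),
\]
where $K$ denotes the jump kernel in \eqref{e:Qalpha}. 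The map $f\mapsto$ right-hand side is a contraction in sup norm with constant at most $\sup\phi/(\mu+\phi)<1$, since $\lambda\le A$ is bounded. It therefore has a unique fixed point in $\widehat C(S)$.

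Two things then need checking. First, the fixed point lies in $\widehat C_1(S)$ when $g$ is smooth: differentiate along the flow, using continuity of $\ell$ in $z$ and continuity of $G$-integrals to get continuity in $\lambda$ despite the indicator $\ind_{\{\ell(n+1,y)\ge A-\lambda\}}$. Second, the fixed point solves the resolvent equation. The discontinuity introduced by this indicator when $G$ has atoms is the delicate point. If regularity fails there, I would instead show that the range of $\mu-\bar Q^\alpha$ is all of $\widehat C(S)$ by approximating $g$ and passing to limits using closedness of the closure. This is legitimate because the resolvent bound $\|f\|\le\|g\|/\mu$ holds for the fixed-point map.

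Finally, vanishing at infinity in $(n,z)$ is preserved by the construction, since $\lambda$ is bounded and the jump shifts $z$ forward and $n$ by one.
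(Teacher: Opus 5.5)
Your route is the paper's: Hille--Yosida via density, dissipativity through a maximum principle that uses $\beta<A$ at the right endpoint, and the range condition via a sup-norm contraction on the integrated resolvent equation. Your Duhamel formula along the flow becomes exactly the paper's $Tf(n,\lambda,z)=e^{-F(\lambda)}\int_\beta^\lambda e^{F(y)}\frac{g(y)+yRf(n,y,z)}{\alpha(y-\beta)}\,dy$ after the substitution $y=\phi_t(\lambda)$; your $K$ is the paper's $R$. Since $A>\beta$ is a standing assumption (Definition~\ref{d:semcr}), the flow never reaches $A$. Hence $\tau\equiv\infty$, your boundary term vanishes, and the $\beta\ge A$ discussion is moot. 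Your bound $\int_0^\infty e^{-\int_0^t(\mu+\phi_s)ds}\phi_t\,dt\le A/(\mu+A)$ is better than the paper's large-$r$ estimate. It gives a contraction for every $\mu>0$, covers $\alpha=0$ simultaneously, and yields the correct value $(g+\beta Kf)(n,\beta,z)/(\mu+\beta)$ at the stationary point; the paper's normalisation $Tf(n,\beta,z):=0$ is a slip. However, the steps you postpone are genuine gaps, and the paper's proof passes over them too.

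First, your final sentence is false. A reset sends $n$ to $0$, not to $n+1$, so the term $\lambda\int\ind_{\{\ell(n+1,y)\ge A-\lambda\}}f(0,\lambdao,z+y)\,dG(y)$ in $Q^\alpha f$ (and in your fixed-point map) need not vanish as $n\to\infty$. In the Hawkes case $\ell(n,z)=\ell(z)$ this term does not depend on $n$ at all. Take $f=\ind_{\{n=0\}}\psi(z)$ with $0\le\psi\in C_c([0,\infty))$ and $\int\psi\,dG>0$; for $n\ge1$ the term is bounded away from $0$ uniformly in $n$ when $\lambda$ is near $A$ and $z=0$. So $Q^\alpha$ does not map $\widehat C_1(S)$ into $\widehat C(S)$: from arbitrarily large $n$ the process resets at a rate bounded below. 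No estimate repairs this; the function space or the hypotheses must change.

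Second, your fallback for atoms cannot work. If $G_\ell(n+1,\cdot)$ has an atom at $a\in(0,A-\lambdao]$, then for $f\in\widehat C_1(S)$ the function $Q^\alpha f(n,\cdot,z)$ itself jumps at $\lambda=A-a$ by $\lambda\int_{\{\ell(n+1,y)=a\}}\bigl(f(0,\lambdao,z+y)-f(n+1,A,z+y)\bigr)\,dG(y)$, which is generically nonzero. The defect sits in $Kf$, not in $g$, so approximating $g$ or invoking closedness of $\bar Q^\alpha$ does nothing; continuity of $G_\ell(n,\cdot)$ must be assumed.

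Third, even then, differentiating along the flow gives no information at $\lambda=\beta$, where $\phi_t(\beta)\equiv\beta$. For $\lambda\neq\beta$ the fixed point satisfies $f_\lambda=\bigl((\mu+\lambda)f-g-\lambda Kf\bigr)/(\alpha(\beta-\lambda))$. This extends continuously to $\beta$ only if $\lambda\mapsto\lambda Kf(n,\lambda,z)$ is differentiable there, which in general requires smoothness of $G_\ell(n+1,\cdot)$ near $A-\beta$. A sup-norm contraction does not deliver this. Without it you have not shown that the fixed point lies in $\widehat C_1(S)$, which is exactly what the range condition requires.
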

\begin{proof} The detailed proof is provided in Appendix \ref{app:thm:hilleyosida}. 
\end{proof}

\subsection{PDMPs with resetting}
\begin{definition}\label{d:semcr}
Given constants $\alpha\geq 0, \beta> 0$,  $A\in (\beta,\infty)$, and $\lambdao \in (0,\beta]$ and a distribution function $G$,  a self-exciting piecewise-deterministic Markov process (PDMP) with resetting is a Markov process in continuous time with state space $S:=(\{\bN \cup\{0\}\}\times [\lambdao,A)\times [0,\infty))$ that is endowed with  a $\sigma$-algebra $\cS$, and   generator $Q^\alpha$ given  by \eqref{e:Qalpha}.
\end{definition}
In the sequel, $X$ will denote a self-exciting PDMP with resetting for given $(\alpha,\beta, \lambdao, A, G)$. Its transition semigroup and infinitesimal generator will be given by $(P_t)_{t\geq 0}$ and $Q^\alpha$.

For each $x\in S$, $\P^{\,\alpha,x}$ will denote its law induced on the space of $S$-valued right continuous paths when $X_0=x$. The expectation operator associated to this law will be  $\E^{\alpha,x}$. We shall drop $\alpha$ from the superscript when $\alpha=0$ to ease the notation. If $\mu$ is a measure on $(S,\cS)$, $\P^{\,\alpha,\mu}$ will denote the law of $X$ with initial law  $\mu$. $(\cF^0_t)_{t \geq 0}$ will denote the natural filtration of $X$ and $(\cF_t)_{t \geq 0}$ its minimal right continuous augmentation.

The following result establishes the Doob-Meyer decomposition for the individual components of $X$. To this end, it is useful to introduce another distribution function
\be \label{e:Gell}
G_{\ell}(n,a) := G(\ell^{-1}(n,a)),
\ee
where
\begin{equation}\label{def:ell.inv}
\ell^{-1}(n,a):=\inf\{z\geq  0: \ell(n,z)>a\},
\end{equation}
with the convention that $\inf \emptyset=\infty.$
Note that $G_{\ell}(n,\cdot)$ is the distribution function of $\ell(n, Z_n)$ for $n\geq 1$.   Indeed, since $\ell^{-1}$ is non-decreasing and $\ell(n,\ell^{-1}(n,a))=a$  by the continuity of $\ell$, one observes that    $Z_n \leq \ell^{-1}(n,a)$ implies $\ell(n,Z_n)\leq a$. Conversely, if $Z_n >\ell^{-1}(n,a)$, then $\ell(n,Z_n)\geq a$. Moreover, if $\ell(n,Z_n)=a$, then $Z_n\leq \ell^{-1}(n,a)$ by the definition of inverse since $\ell$ cannot decrease. But this contradicts $Z_n >\ell^{-1}(n,a)$. Thus, $\ell(n,Z_n)> a$  when $Z_n >\ell^{-1}(n,a)$, too.
\begin{theorem}\label{thm:mart}
Let $X=(N,\Lambda,C)$  be a self-exciting PDMP with resetting. Then, the following processes are $({(\cF_t)}_{t \ge 0}, \P^{\,\alpha,x})$-martingales for each $x\in S$:
\begin{align}
M^{N}_t&= N_t -\int_0^t\Lambda_s ds + \int_0^t (N_s+1)\Lambda_s {\bar{G}_{\ell}(N_s+1, A-\Lambda_s)}ds; \label{e:MN}\\
M^{\Lambda }_t&= \Lambda_t- \int_0^t \big(\alpha(\beta-\Lambda_s)+  L^o(N_s+1, \Lambda_s)\big)ds,\label{e:ML}\\
M^{C}_t&= C_t-\mu_Z \int_0^t\Lambda_s ds,\label{e:MC}
\end{align}
where $\bar{G}_\ell=1-G_\ell$,  and
\begin{equation} \label{e:Lo}
L^o(n,\lambda):= \lambda (\lambdao - \lambda) \bar{G}_{\ell}(n, (A-\lambda)\minus) +  \lambda \int\ind_{\{y<A-\lambda\}} \, y \,dG_{\ell}(n, y).
\end{equation}
\end{theorem}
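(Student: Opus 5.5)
The plan is to use the martingale problem associated with the semigroup of Theorem \ref{thm:hilleyosida}, applied to (truncations of) the coordinate functions. By that theorem, the closure of $Q^\alpha$ generates a strongly continuous contraction semigroup $(P_t)$. So for every $f\in\mathcal D(Q^\alpha)=\widehat C_1(S)$ the standard Dynkin argument (e.g.\ Ethier--Kurtz, Prop.~4.1.7) applies: using the Markov property of $X$ under $\P^{\,\alpha,x}$,
\[
M^f_t:=f(X_t)-f(X_0)-\int_0^t Q^\alpha f(X_s)\,ds
\]
is an $(\cF_t)$-martingale. The only issue is that the functions of interest, $f(n,\lambda,z)=n$, $\lambda$, $z$, are not in $\widehat C(S)$, since $n$ and $z$ are unbounded. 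Note that $\lambda$ itself is bounded on $[\lambdao,A)$.

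\textbf{Step 1: generator of the coordinate functions.} I would first compute $Q^\alpha$ formally on the three coordinates via \eqref{e:Qalpha}, using that $\ell(n+1,Z)$ has distribution $G_\ell(n+1,\cdot)$.
\begin{itemize}
\item For $f=n$: $Q^\alpha f=-\lambda n+\lambda(n+1)\P(\ell(n+1,Z)<A-\lambda)=\lambda-\lambda(n+1)\bar G_\ell(n+1,(A-\lambda)-)$.
\item For $f=\lambda$: $-\lambda^2+\alpha(\beta-\lambda)+\lambda\int\ind_{\{y<A-\lambda\}}(\lambda+y)\,dG_\ell(n+1,y)+\lambda\lambdao\bar G_\ell(n+1,(A-\lambda)-)$. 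This simplifies to $\alpha(\beta-\lambda)+L^o(n+1,\lambda)$.
\item For $f=z$: $-\lambda z+\lambda\int(z+y)\,dG=\lambda\mu_Z$.
\end{itemize}
These integrands match \eqref{e:MN}--\eqref{e:MC}. The one discrepancy is that \eqref{e:MN} is written with $\bar G_\ell(N_s+1,A-\Lambda_s)$ rather than its left limit. The two differ only when $A-\Lambda_s$ is an atom of $G_\ell(N_s+1,\cdot)$. I would check this convention and, if necessary, argue that $\{s: A-\Lambda_s \text{ is an atom}\}$ is Lebesgue-null when $\alpha>0$; otherwise the statement should be read with the left limit.

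\textbf{Step 2: localization.} Take smooth cutoffs $\varphi_k$ with $\varphi_k=1$ on $[0,k]$ and $\varphi_k=0$ on $[k+1,\infty)$. Set $f_k(n,\lambda,z)=g(n,\lambda,z)\varphi_k(n)\varphi_k(z)$, where $g$ is one of the coordinates, so that $f_k\in\widehat C_1(S)$. Let $\tau_k$ be the first time $N$ or $C$ exceeds $k-\max\{\ell\text{-jump}\}$ (more simply, the first time $N_t\ge k$ or $C_t\ge k$, handling the last jump separately). Before $\tau_k$, $f_k(X_{s})=g(X_s)$, and $Q^\alpha f_k(X_s)$ coincides with $Q^\alpha g(X_s)$ up to a correction involving jumps that land above level $k$. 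This correction is bounded by $\lambda\,\E[(n+1+z+Z)\ind_{\{z+Z>k\}}]$ or similar, and vanishes as $k\to\infty$. This identifies $M^g_{t\wedge\tau_k}$ as martingales up to vanishing error, so $M^N,M^\Lambda,M^C$ are local martingales.

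\textbf{Step 3: removing the localization (the main obstacle).} Since $\Lambda<A$, the jump count of $X$ is stochastically dominated by a Poisson process of rate $A$, via thinning. Hence $N_t\le N^{\text{tot}}_t$ with $\E[\sup_{s\le t}N^{\text{tot}}_s]\le At$, and $\sup_{s\le t}C_s=C_t\le\sum_{i\le N^{\text{tot}}_t}Z_i$ has mean at most $At\mu_Z$. The compensator integrands are bounded by $A(1+N_s)$, $A(\alpha+A+\text{mean jump})$ and $A\mu_Z$ respectively. Therefore $\sup_{s\le t}|M^g_s|$ is integrable, the local martingales are of class (D) on $[0,t]$, and hence true martingales. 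The delicate points are making the domination rigorous from the semigroup construction alone (rather than from a pathwise construction), together with the right-continuity and augmentation of the filtration. I would handle the former by comparing with the martingale problem for $f=\varphi_k(n)$, which gives $\E^{\alpha,x}[N_{t\wedge\tau_k}]\le At$ directly, and then letting $k\to\infty$ by monotone convergence, with $\tau_k\to\infty$ a.s.
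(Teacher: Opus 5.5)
Your proposal takes the same route as the paper: there too, $M^N,M^\Lambda,M^C$ are defined as the Dynkin processes of $f_1=n$, $f_2=\lambda$, $f_3=z$ and declared martingales ``by the definition of generator'', and $Q^\alpha f_i$ is computed exactly as in your Step~1, giving the left limit $\bar G_\ell(N_s+1,(A-\Lambda_s)\minus)$. The formula for $M^N$ should therefore simply be read with that left limit. Your null-set fallback does not hold in general: when $\lambdao=\beta$ the intensity sits at $\beta$ for a positive time after every reset, and when $\alpha=0$ it is piecewise constant.

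Your Steps~2--3 add the localization and integrability argument that the paper omits entirely, since the coordinate functions $n$ and $z$ are not in $\widehat C_1(S)$. As written, Step~2 needs a small fix. If you stop at level $k$ and cut off at the same level, the correction term is not small when $C_s$ is close to $k$. Instead, stop at level $k$ but cut off at level $2k$. Alternatively, skip the stopping times and pass to the limit in the Dynkin identity for $f_k$ by dominated convergence, using the moment bounds of Step~3.
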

\begin{proof}
Let $M^N_t:=N_t - \int_0^t Q^\alpha f_1(X_s)ds$, $M^\Lambda_T:=\Lambda_T - \int_0^t Q^\alpha f_2(X_s)ds$, and $M^C_t:=C_t - \int_0^t Q^\alpha f_3(X_s)ds$, where $f_1(x)=n, f_2(x)=\lambda,$ and $f_3(x)=z$ for $x=(n,\lambda,z)$. By the definition of generator,  these processes are martingales.

Next observe that,
since $G_{\ell}(n,u) = G(\ell^{-1}(n,u))$,
\[
Q^\alpha f_1(x)= -\lambda n + \lambda (n+1) G_{\ell}(n+1, (A-\lambda) \minus)=\lambda - \lambda (n+1) \bar{G}_{\ell}(n+1, (A-\lambda) \minus).
\]
Similarly,
\begin{align*}
Q^\alpha f_2(x)&= -\lambda^2+ \alpha(\beta-\lambda) +\lambda  \int\ind_{\{\ell(n+1,y)<A-\lambda\}} \, (\lambda + \ell(n+1,y))\,dG(y)\\
&+\lambda \lambdao \bar{G}_{\ell}(n+1, (A-\lambda)\minus)\\
&=\alpha(\beta-\lambda) +\lambda  \int\ind_{\{\ell(n+1,y)<A-\lambda\}} \, \ell(n+1,y) \,dG(y)\\
&+\lambda (\lambdao - \lambda) \bar{G}_{\ell}(n+1, (A-\lambda)\minus)\\
&=\alpha(\beta-\lambda) +\lambda (\lambdao - \lambda) \bar{G}_{\ell}(n+1, (A-\lambda)\minus) +\lambda  \int\ind_{\{y<A-\lambda\}} \, y \,dG_{\ell}(n+1, y) \\
&=\alpha(\beta-\lambda) + L^o(n+1,\lambda)
\end{align*}
as well as
\[
Q^\alpha f_3(x)= -\lambda z + \lambda z + \lambda \int y \,dG(y).
\]
\end{proof}
\begin{proposition}\label{C.ergodic} Let $X=(N,\Lambda,C)$  be a self-exciting PDMP with resetting and $M^C$ be the martingale in \eqref{e:MC}. Then, for each $x\in S$,
\[
\E^{\alpha,x}[(M_t^C)^2]=\int_0^t \E^{\alpha,x}[\Lambda_s] (\mu_Z^2+\sigma^2_Z) ds\leq A(\mu_Z^2+\sigma^2_Z)t.
\]
In particular, $\frac{M^C_t}{t}\to 0$
in $L^2(\P^{\,\alpha,x})$ and $\P^{\,\alpha,x}$-a.s.
\end{proposition}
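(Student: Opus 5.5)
The plan is to compute the predictable quadratic variation of $M^C$ directly from the generator, and then pass from $L^2$ to almost sure convergence by a standard martingale argument.

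\emph{Step 1: the second moment.} Apply the generator $Q^\alpha$ to $f_4(x)=z^2$. The reset term and the non-reset term both shift $z$ to $z+y$, and their indicators add up to one, so
\[
Q^\alpha f_4(x)= -\lambda z^2+\lambda\int (z+y)^2\,dG(y)=2\lambda z\mu_Z+\lambda(\mu_Z^2+\sigma_Z^2).
\]
Hence $C_t^2-\int_0^t \big(2\mu_Z\Lambda_s C_s+(\mu_Z^2+\sigma_Z^2)\Lambda_s\big)ds$ is a martingale, exactly as in the proof of Theorem~\ref{thm:mart}. Since $f_4$ is unbounded, this needs a localisation. I would use the stopping times at which $N^0$ (the total number of jumps) exceeds $k$, together with truncations of $z$. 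Because $\Lambda\le A$, the jump counts are dominated by a Poisson process of rate $A$, so $C$ has all the moments needed and the localisation can be removed by dominated and monotone convergence.

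Next, Itô's product rule for $(M^C)^2=\big(C_t-\mu_Z\int_0^t\Lambda_s\,ds\big)^2$ gives a martingale plus $\int_0^t\big(d\langle C\rangle\text{-part}\big)$. Equivalently, $[M^C]_t=\sum_{s\le t}(\Delta C_s)^2$, whose compensator is $(\mu_Z^2+\sigma_Z^2)\int_0^t\Lambda_s\,ds$ by the same generator computation applied to the jumps. Taking expectations, and using that $M^C$ is a square-integrable martingale starting at $0$, yields
\[
\E^{\alpha,x}[(M_t^C)^2]=(\mu_Z^2+\sigma_Z^2)\int_0^t\E^{\alpha,x}[\Lambda_s]\,ds.
\]
The bound by $A(\mu_Z^2+\sigma_Z^2)t$ then follows from $\Lambda_s<A$. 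It follows at once that $\E[(M_t^C/t)^2]\le A(\mu_Z^2+\sigma_Z^2)/t\to0$, which is the $L^2$ convergence.

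\emph{Step 2: almost sure convergence.} This is the only nontrivial step. I would apply Doob's $L^2$ maximal inequality on the dyadic blocks $[2^k,2^{k+1}]$:
\[
\P\Big(\sup_{t\le 2^{k+1}}|M^C_t|>\eps 2^k\Big)\le \frac{4A(\mu_Z^2+\sigma_Z^2)2^{k+1}}{\eps^2 4^k}.
\]
This bound is summable in $k$. By Borel–Cantelli, almost surely $\sup_{2^k\le t\le 2^{k+1}}|M^C_t|/t\le \eps$ for all large $k$. Letting $\eps\downarrow0$ along a countable sequence gives $M^C_t/t\to0$ $\P^{\,\alpha,x}$-a.s. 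An alternative route is the strong law for locally square-integrable martingales, since $\langle M^C\rangle_t\le A(\mu_Z^2+\sigma_Z^2)t$, but the dyadic argument is self-contained.

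The main technical care lies in justifying the martingale property for the unbounded function $z^2$, that is, the integrability. The domination of the jump counts by a Poisson$(At)$ variable, combined with the finite second moment of $Z$, handles this.
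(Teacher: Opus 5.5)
Your proof is correct. Step~1 is essentially the paper's argument. Both of you apply $Q^\alpha$ to $z^2$ and identify $\langle M^C\rangle_t=(\mu_Z^2+\sigma_Z^2)\int_0^t\Lambda_s\,ds$. The paper does this by expanding $(M^C_t)^2$ with integration by parts; you do it via the compensator of $[M^C]_t=\sum_{s\le t}(\Delta C_s)^2$. Your localisation also supplies an integrability step the paper skips, since $z^2$ is not in the domain $\widehat{C}_1(S)$.

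The genuine difference is the almost sure convergence. The paper takes the route you mention only in passing. It uses $\langle M^C\rangle_\infty=\infty$ (from $\Lambda\ge\lambda^o>0$) and the discrete-time martingale strong law it cites from Durrett, applied along sequences $t_n\uparrow\infty$, to get $M^C_{t_n}/\langle M^C\rangle_{t_n}\to0$. It then divides by $t$ using $\langle M^C\rangle_t\le Kt$.

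Your dyadic Doob--Borel--Cantelli argument has three advantages:
\begin{itemize}
\item it uses only the second-moment bound you already proved;
\item it does not need $\langle M^C\rangle_\infty=\infty$;
\item because it controls $\sup_{t\le 2^{k+1}}|M^C_t|$, it deals with the continuous time parameter directly.
\end{itemize}
The paper's version leaves two points implicit that yours avoids. First, a.s.\ convergence along every deterministic sequence does not by itself give a.s.\ convergence as $t\to\infty$, since the null set depends on the sequence. Second, the predictable bracket of $(M^C_{t_n})_n$ is $\sum_{k\le n}\E[\langle M^C\rangle_{t_k}-\langle M^C\rangle_{t_{k-1}}\mid\cF_{t_{k-1}}]$, not $\langle M^C\rangle_{t_n}$. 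Both points can be repaired here, e.g.\ by using monotonicity of $C$ and of $\int_0^t\Lambda_s\,ds$ between integer times, but your route is cleaner. In exchange, the strong-law route is pathwise, normalising by the random clock $\langle M^C\rangle_t$ rather than relying on an expectation bound. That generality is not needed here, because $\Lambda<A$.

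One small point you share with the paper: under $x=(n,\lambda,z)$ one has $M^C_0=C_0=z$. So the displayed identity is really $\E^{\alpha,x}[(M^C_t)^2]=z^2+(\mu_Z^2+\sigma_Z^2)\int_0^t\E^{\alpha,x}[\Lambda_s]\,ds$. Your phrase ``starting at $0$'' presumes $z=0$. The limit statements are unaffected.
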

\begin{proof}
Using $f(x)=z^2$ whenever $x=(n,\lambda,z)$, one can show that
\[
Q^\alpha f(x)= \lambda \mu_Z(2 z+ \mu_Z) + \lambda \sigma^2_Z.
\]
Thus, $R_t:= C_t^2-\int_0^t \Lambda_s \{\mu_Z(2C_s +\mu_Z) +\sigma^2_Z\}ds$ is a martingale.  Since $\Lambda$ is bounded due to resetting, this in turn implies $M^C$ is a square integrable martingale.
Next observe that
\begin{align*}
(M_t^C)^2 &= C_t^2 + \mu_Z^2 \Big(\int_0^t\Lambda_sds\Big)^2-2C_t \mu_Z\int_0^t\Lambda_sds\\
&=C_t^2 + \mu_Z^2 \Big(\int_0^t\Lambda_sds\Big)^2-2\mu_Z\int_0^tC_s\Lambda_sds-2\mu_Z\int_0^t \Big(\int_0^s \Lambda_rdr \Big)dM^C_s\\
&-2\mu_Z^2 \int_0^t \Big(\int_0^s \Lambda_rdr\Big)\Lambda_s ds\\
&=R_t- 2\mu_Z\int_0^t \Big(\int_0^s \Lambda_rdr \Big)dM^C_s
+\int_0^t \Lambda_s (\mu^2_Z +\sigma^2_Z)ds.
\end{align*}
In particular, the angle bracket process for $M^C$ is given by
\[
\langle M^C\rangle_t=\int_0^t \Lambda_s (\mu^2_Z +\sigma^2_Z)ds.
\]
Thus, due to the boundedness of $\Lambda$ and the square integrability of $M^C$, we have
\[
\E^{\alpha,x}[(M_t^C)^2]= \int_0^t \E^{\alpha,x}[\Lambda_s] (\mu_Z^2+\sigma^2_Z)ds \le
A (\mu_Z^2+\sigma^2_Z) t
\]
and this proves the first claim. The convergence in $L^2$ to $0$ is immediate, which also implies convergence in probability.

To show almost sure convergence, note that $\langle M^C\rangle_\infty=\infty$ since $\Lambda \ge\lambdao>0$. Moreover, for any sequence $t_n$ increasing to $\infty$, $(M_{t_n}^C)_{n\geq 1}$ is a martingale with angle bracket process  $(\langle M^C\rangle_{t_n})_{n\geq 1}$. Thus,
\[
\frac{M_{t_n}^C}{\langle M^C\rangle_{t_n}}\to 0,
\]
as $n\to \infty$ (see Durrett \cite[Theorem 4.5.3]{durrett}). Therefore,
\[
\frac{M_{t}^C}{\langle M^C\rangle_{t}}\to 0, \quad t\to \infty.
\]
Since $\langle M^C\rangle_{t}\leq K t$ for some constant $K$ and for each $t$, the claim follows.
\end{proof}
Now, the following result is immediate.
\begin{corollary}\label{c:Climit}
Let $X=(N,\Lambda,C)$  be a self-exciting PDMP with resetting. Then, for each $x\in S$,
\[
\lim_{t\to \infty} \frac{C_t}{t}=\mu_Z\lim_{t\to \infty}\frac{\int_0^t\Lambda_sds}{t}, \; \P^{\,\alpha,x}\mbox{-a.s.}
\]
\end{corollary}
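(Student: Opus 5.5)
The corollary follows from the decomposition \eqref{e:MC} of Theorem~\ref{thm:mart} together with Proposition~\ref{C.ergodic}. For every $t>0$ I would write
\[
\frac{C_t}{t}=\frac{M^C_t}{t}+\mu_Z\,\frac{\int_0^t\Lambda_s\,ds}{t}.
\]
By Proposition~\ref{C.ergodic}, $M^C_t/t\to 0$ $\P^{\,\alpha,x}$-a.s. for each $x\in S$. Fix an $\omega$ in the full-measure set where this holds. Then $C_t/t$ and $\mu_Z t^{-1}\int_0^t\Lambda_s\,ds$ differ by a term that vanishes as $t\to\infty$. So one of the two limits exists if and only if the other does, and when they exist the two sides coincide.

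I would read the statement in this sense. It says that the two limits agree almost surely whenever either exists, and it holds equally for $\limsup$ and $\liminf$. Existence of the limit of $t^{-1}\int_0^t\Lambda_s\,ds$ itself is not part of this claim. That will come from the ergodic results for $Y=(N,\Lambda)$ later in the paper.

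The only point that needs care is the almost sure convergence along continuous time rather than along sequences. Proposition~\ref{C.ergodic} already handles this, so nothing new is required. If one prefers to be explicit, one can also bound $\langle M^C\rangle_t\le A(\mu_Z^2+\sigma_Z^2)t$, since $\Lambda\le A$ by the resetting.

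To state the result for $\limsup$/$\liminf$, I would use subadditivity of $\limsup$ together with the identity $\limsup_t (M^C_t/t)=0$. No step here is a real obstacle; the argument is a direct combination of the two earlier results.
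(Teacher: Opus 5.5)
Your proposal is correct and follows the paper's own argument, which treats the corollary as immediate from writing $C_t=M^C_t+\mu_Z\int_0^t\Lambda_s\,ds$ via \eqref{e:MC} and invoking the almost sure convergence $M^C_t/t\to 0$ from Proposition~\ref{C.ergodic}. The paper also reads the statement as you do: the two limits agree whenever either exists, and existence of the time-average of $\Lambda$ is supplied later by the ergodic theory for $Y$.
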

Corollary  \ref{c:Climit} reveals that the long-run average cost of cyber attacks is governed by the time-average of the intensity process $\Lambda$. In the presence of resetting, this time-average is in turn determined by the stationary distribution of $\Lambda$, making its analysis central to the optimisation problem \eqref{eq:problem}. The remainder of the paper is largely devoted to the ergodic behaviour of the process  $(N,\Lambda)$. As a necessary first step, we must establish that the reset time is finite, which is the subject of the next section.
\section{Sufficient conditions for resetting in finite time}\label{sec:resetting}

Whether resetting event occurs in finite time is crucial to understand the limiting behaviour of the intensity process. We shall show in Section \ref{sec:stationary} that the pair $(N,\Lambda)$ is ergodic under the assumption that the reset occurs in finite time. This section is devoted to finding necessary and sufficient conditions for the finiteness of the reset time, $T_A:=\inf\{t>0: \Lambda_t=\lambdao, \Lambda_{t-}\in (\lambdao,A)\}$. Note that the first reset time coincides with the first hitting time of $[A,\infty)$ by the process $\Lambda^0$, this will be useful in the proof of Theorem 4.2.

One can expect that a reset will not happen if the level $A$ is large and the jumps in the intensity process are relatively small.

\begin{example}\label{e:noreset}
Consider the $dG_{\ell}(n, u)=2^n \ind_{\{0\leq u \leq 2^{-n}\}}$;  that is, the $n$th-jump is uniformly distributed on the interval $[0,2^{-n}]$.
Thus, 
$$\sum_{n = 1}^{N_t^0} \ell(n, Z_{n}) \leq \sum_{n\ge1} 2^{-n} \leq 1.
$$
Assuming that $\lambdao=\beta$ and $A > \Lambda_0+1$,  we have that
$$
\Lambda_t \le \beta + \sum_{n=1}^{N_t^0} \ell(n, Z_{n}) < A
$$
since the process $\Lambda$ is always non-increasing between two jumps whenever $\Lambda_t \ge \beta$.
Hence, the intensity never resets back to $\lambdao$.
\end{example}

If $\alpha=0$, a necessary and sufficient condition for the finiteness of the reset time $T_A$ readily follows from Kolmogorov's three series theorem.
\begin{theorem}\label{t:alph0finreset}
Suppose $\alpha=0$. Then, $\P^x(T_A<\infty)=1$ for all $A$ if and only if one of the following conditions is violated for some $A<\infty$:
\begin{enumerate} \setlength{\itemsep}{0.75em}
\item $\sum_{n=1}^\infty \P^x(\ell(n,Z_n)>A)<\infty$.
\item $\sum_{n=1}^\infty \E^x[\ell(n,Z_n);\ell(n,Z_n)>A)]<\infty$.
\item $\sum_{n=1}^\infty \mathbb{V}^x[\ell(n,Z_n);\ell(n,Z_n)>A)]<\infty$,
where $\mathbb{V}^x$ stands for variance under $\P^x$.
\end{enumerate}
\end{theorem}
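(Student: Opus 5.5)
The plan is to reduce the question to the almost sure divergence of a series of independent nonnegative random variables, and then apply Kolmogorov's three-series theorem. Throughout, I read conditions 2 and 3 in the usual three-series form, with the truncated variables $Y_n^A:=\ell(n,Z_n)\ind_{\{\ell(n,Z_n)\le A\}}$. This is the standard formulation; with the event $\{\ell(n,Z_n)>A\}$ written literally, conditions 2 and 3 would not characterise convergence. The laws of $\ell(n,Z_n)$ are $G_\ell(n,\cdot)$ from \eqref{e:Gell}, and they do not depend on $x$.

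\emph{Step 1 (path structure when $\alpha=0$).} Before the first reset, $\Lambda$ coincides with $\Lambda^0$, as noted at the start of this section. By \eqref{eq:noreset2} with $\alpha=0$, $\Lambda^0$ is constant between attacks, and after the $n$-th attack it equals $\lambda+S_n$, where $\lambda$ is the initial intensity and $S_n:=\sum_{j=1}^n\ell(j,Z_j)$ (with the index shifted by the initial value of $N$, which is harmless). On $\{T_A=\infty\}$ the intensity stays in $[\lambdao,A)$. Hence the interarrival times are dominated by exponential variables with rate $\lambdao>0$, and infinitely many attacks occur in finite time a.s. Conversely, the intensity is bounded by $A$, so there is no explosion before $T_A$. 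Therefore, up to a null set,
\[
\{T_A<\infty\}=\{\sup_n S_n\ge A-\lambda\}=\{S_\infty\ge A-\lambda\},\qquad S_\infty:=\lim_n S_n\in[0,\infty],
\]
where the limit exists because the summands are nonnegative. To make this rigorous I would realise the process as $N^0$ with intensity $\Lambda^0$ driven by a Poisson random measure, stopped at the first passage above $A$. This is consistent with the generator \eqref{e:Qalpha}.

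\emph{Step 2 (zero--one dichotomy).} The summands $\ell(j,Z_j)$ are independent because the $Z_j$ are i.i.d. Convergence of $\sum_j \ell(j,Z_j)$ is a tail event, so by Kolmogorov's zero--one law $S_\infty<\infty$ a.s. or $S_\infty=\infty$ a.s. Moreover, the three-series theorem says that convergence holds a.s. if and only if conditions 1--3 hold for some (equivalently, every) truncation level $A$.

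\emph{Step 3 (the two directions).} Suppose one of conditions 1--3 is violated for some $A$. Then the series does not converge a.s., so by Step 2 we have $S_\infty=\infty$ a.s. Step 1 then gives $\P^x(T_A<\infty)=1$ for every level and every $x$. Conversely, suppose all three conditions hold for every $A$, so that $S_\infty<\infty$ a.s. Choose $M$ with $\P(S_\infty<M)>0$ and take $A>\lambda+M$. Then $\P^x(T_A=\infty)\ge\P(S_\infty<A-\lambda)>0$, so it is not true that $T_A<\infty$ a.s. for all $A$. This proves the equivalence.

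The main obstacle is Step 1. One has to justify the identification of $\{T_A<\infty\}$ with the divergence event: the process constructed abstractly via Theorem~\ref{thm:hilleyosida} must be shown to have the pathwise description \eqref{eq:noreset2} up to $T_A$, and one must rule out the case where only finitely many jumps occur before $T_A$. I would handle this with the martingale $M^N$ from Theorem~\ref{thm:mart}. Its compensator grows at least at rate $\lambdao$ before reset, so $N_t\to\infty$ on $\{T_A=\infty\}$. Everything else is a direct application of the zero--one law and the three-series theorem.
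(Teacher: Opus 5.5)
Your proposal is correct and follows the paper's route: the paper observes that $T_A<\infty$ a.s.\ for all $A$ exactly when $\sum_n \ell(n,Z_n)=\infty$ a.s., then invokes Kolmogorov's three-series theorem, and you additionally make explicit the zero--one law step and the pathwise identification that the paper's one-line proof leaves implicit. Your reading of conditions 2 and 3 with the truncation $\{\ell(n,Z_n)\le A\}$ is the right one, since taken literally with $\{\ell(n,Z_n)>A\}$ the statement fails (e.g.\ for $\ell(n,z)=1/n$); the only slip is that $\{T_A<\infty\}$ equals $\{\exists n:\ S_n\ge A-\lambda\}$ (up to null sets) rather than $\{\sup_n S_n\ge A-\lambda\}$, which is harmless because you only use $\{S_\infty<A-\lambda\}\subset\{T_A=\infty\}$ and $\{S_\infty=\infty\}\subset\{T_A<\infty\}$.
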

Observe that the distribution of $Z_i$'s is independent of $x$; thus, it suffices to check the above conditions for  some $x$.
\begin{proof}
Note that $\P^x(T_A<\infty)=1$ for all $A$ if and only if $\sum_{n=1}^\infty\ell(n,Z_n)=\infty,\, \P^x$-a.s. Then the claim follows from Kolmogorov's three series theorem.
\end{proof}

It is natural to ask whether the reset time has finite mean. The following result establishes that, under a familiar condition on the jump distributions, the expected reset time is indeed finite for any $A$.
\begin{theorem} \label{t:harris} Suppose $\alpha >0$ and let  define
\begin{equation}\label{e:defphi}
\varphi_n(t)=\int\exp(t\,\ell(n,z))\,dG(z).
\end{equation}
Suppose that there exists a bounded function $r:\bN\cup\{0\} \to \R$ that does not change sign and  is bounded away from $0$, and satisfies
\be \label{eq:adjusmenteq}
\begin{split}
\varphi_{n+1}(r(n+1))\geq 1+ \alpha r(n), \quad \forall n\geq 0, \quad \mbox{if } r(0)>0;\\
\varphi_{n+1}(r(n+1))\leq 1+ \alpha r(n), \quad \forall n\geq 0, \quad \mbox{if } r(0)<0.
\end{split}
\ee
Then,
if $\sup_n \varphi_n(r(n))<\infty$ and $\sup_n r(n)<\infty$, $\E^{\alpha,x}[T_A]<\infty$.
\end{theorem}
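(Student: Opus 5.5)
The plan is a Foster--Lyapunov/Dynkin argument on the process before its first reset. Up to $T_A$, the process $(N,\Lambda)$ coincides with the non-reset pair $(N^0,\Lambda^0)$, and $T_A$ is the first time $\Lambda^0$ enters $[A,\infty)$, as noted at the start of this section. I will therefore work with the generator of $(N^0,\Lambda^0)$, i.e.\ $Q^\alpha$ without the indicator restrictions and without the $C$-component:
\[
\mathcal{L}f(n,\lambda)=-\lambda f(n,\lambda)+\alpha(\beta-\lambda)f_\lambda(n,\lambda)+\lambda\int f(n+1,\lambda+\ell(n+1,y))\,dG(y).
\]
Mimicking the adjustment coefficient in Cram\'er--Lundberg theory, I take the test function $V(n,\lambda):=\exp(r(n)\lambda)$. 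Because $r$ is bounded and $\Lambda^0_{s}\in[\lambdao,A)$ for $s<T_A$, $V$ is bounded on the relevant region. The post-jump value $V(N^0_{T_A},\Lambda^0_{T_A})$ is controlled in expectation by $\sup_n\varphi_n(r(n))<\infty$, since the overshoot is at most one jump.

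The key step is the drift computation. Substituting $V$ gives
\[
\mathcal{L}V(n,\lambda)=V(n,\lambda)\Big[\alpha\beta r(n)+\lambda\big(e^{(r(n+1)-r(n))\lambda}\varphi_{n+1}(r(n+1))-1-\alpha r(n)\big)\Big].
\]
In the case $r(0)>0$ I want to show $\mathcal{L}V\geq c$ for some $c>0$ on $\{\lambda\in[\lambdao,A)\}$. The first term is at least $\alpha\beta\inf_n r(n)\,e^{\lambdao\inf r}>0$, because $r$ is bounded away from $0$. The bracket multiplying $\lambda$ should be nonnegative by \eqref{eq:adjusmenteq}. The case $r(0)<0$ is symmetric: the same computation should give $\mathcal{L}V\leq -c$.

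Given the drift bound, I apply Dynkin's formula, justified by Theorem~\ref{thm:hilleyosida} (or directly via the compensator of the jump measure, as in Theorem~\ref{thm:mart}), at the bounded stopping time $T_A\wedge t$. This yields
\[
c\,\E^{\alpha,x}[T_A\wedge t]\leq \E^{\alpha,x}[V(N^0_{T_A\wedge t},\Lambda^0_{T_A\wedge t})]-V(x)\leq K,
\]
where $K$ depends only on $A$, $\sup_n r(n)$ and $\sup_n\varphi_n(r(n))$. Letting $t\to\infty$ and using monotone convergence gives $\E^{\alpha,x}[T_A]\leq K/c<\infty$.

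I expect the main obstacle to be the factor $e^{(r(n+1)-r(n))\lambda}$ in the drift, since \eqref{eq:adjusmenteq} compares $\varphi_{n+1}(r(n+1))$ with $1+\alpha r(n)$ only without this factor. My first attempt is to modify the Lyapunov function to $V(n,\lambda)=\exp(r(n)\lambda+b(n))$, with correction terms $b(n)$ chosen recursively so that the mismatch is absorbed. This should be possible because $\lambda$ ranges in the compact interval $[\lambdao,A]$ and $r$ is bounded. Failing that, I would exploit the slack coming from the strictly positive term $\alpha\beta r(n)$, which dominates the error when the increments $r(n+1)-r(n)$ are small relative to $1/A$. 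In the worst case I would restrict attention to the relevant structure, for instance constant or monotone $r$, where the factor has the right sign.
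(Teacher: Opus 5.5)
Your route is the paper's: an exponential test function, Dynkin at $T_A\wedge t$, the drift bound $\mathcal{L}V\ge\alpha\beta r(n)V\ge\eps$, and control of $V$ at $T_A$ through $\sup_n\varphi_n(r(n))$. The step that fails, and which the paper's proof shares, is your claim that $\E^{\alpha,x}[V(N^0_{T_A},\Lambda^0_{T_A})]$ is bounded by a constant depending only on $A$, $\sup_n r(n)$ and $\sup_n\varphi_n(r(n))$ ``since the overshoot is at most one jump''. The jump at $T_A$ is selected precisely because it is large enough to cross $A$. So $N^0_{T_A}$ is not independent of $Z_{N^0_{T_A}}$, and $\E[e^{r(N^0_{T_A})\ell(N^0_{T_A},Z_{N^0_{T_A}})}]\neq\E[\varphi_{N^0_{T_A}}(r(N^0_{T_A}))]$ in general; the paper makes exactly this identification. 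Without it, the only general bound comes from summing over all jumps before $T_A$ and using the compensator:
\[
\E^{\alpha,x}\big[V(N^0_{T_A\wedge t},\Lambda^0_{T_A\wedge t})\big]\le e^{\|r\|_\infty A}\Big(1+A\sup_n\varphi_n(r(n))\,\E^{\alpha,x}[T_A\wedge t]\Big).
\]
This grows with $\E^{\alpha,x}[T_A\wedge t]$ and does not close your inequality. For $r>0$ no argument can close it, because the statement is false. Take $\alpha=1$, $\lambdao=\beta$, $A=\beta+2$, $r\equiv1$, $q_n:=2^{-n-2}$, $L_n:=\log(2/q_n)$, $Z_1$ uniform on $[0,1]$, and $\ell(n,\cdot)$ continuous and non-decreasing, equal to $2^{-n}$ on $[0,1-2q_n]$ and to $L_n$ on $[1-q_n,\infty)$. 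Then $\varphi_n(1)\ge 1-2q_n+q_ne^{L_n}\ge 2=1+\alpha r(n-1)$ and $\varphi_n(1)\le e^{1/2}+2q_ne^{L_n}\le e^{1/2}+4$, so all hypotheses hold. Yet with probability $\prod_{n\ge1}(1-2q_n)>0$ every jump equals $2^{-n}$, so $\Lambda^0\le\beta+1<A$ forever and $\E^{\alpha,x}[T_A]=\infty$ for $x=(0,\beta,0)$. A correct positive-$r$ version needs an extra overshoot hypothesis, e.g.\ $\sup_{n,a}\E[e^{r(n)(\ell(n,Z_1)-a)}\mid\ell(n,Z_1)>a]<\infty$, which holds for the exponential jumps of Section~\ref{sec:exponential}. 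Conditioning on the pre-jump state at each jump time then yields the uniform bound on $\E^{\alpha,x}[V(N^0_{T_A\wedge t},\Lambda^0_{T_A\wedge t})]$ that you need.

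You are right that $e^{(r(n+1)-r(n))\lambda}$ is a second obstacle, and the paper's proof also passes over it. The bound $\mathcal{L}V\ge\alpha\beta r(n)V$ follows from \eqref{eq:adjusmenteq} only if $r(n+1)\ge r(n)$ when $r>0$; symmetrically, $\mathcal{L}V\le\alpha\beta r(n)V$ follows only if $r(n+1)\le r(n)$ when $r<0$. Your remedies are not carried out, and the first fails in general. It requires $b(n+1)-b(n)\ge\sup_{\lambda\in[\lambdao,A)}(r(n)-r(n+1))\lambda$, so every oscillation of $r$ of amplitude $d$ raises $b$ by at least $d(A-\lambdao)$. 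Hence $V$ is unbounded in $n$ when $r$ has infinite variation, and your bound on $\E V$ is lost. Restricting to monotone $r$ changes the theorem. What your argument does establish cleanly is the case $r<0$ with $r$ non-increasing (e.g.\ constant). There $0<V\le 1$, no overshoot control is needed, and Dynkin gives $\eps\,\E^{\alpha,x}[T_A\wedge t]\le V(x)\le1$ with $\eps=\alpha\beta\inf_n|r(n)|\,e^{-\|r\|_\infty A}$. For arbitrary negative $r$ the conclusion is still true, but it is best proved without a Lyapunov function. Since $1-e^{r(n)\ell}\le\ind_{\{\ell\ge\delta\}}+\|r\|_\infty\delta$, \eqref{eq:adjusmenteq} gives $\P(\ell(n,Z_n)\ge\delta)\ge p>0$ uniformly in $n$ for suitable $\delta,p$. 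Before $T_A$ the intensity stays in $[\lambdao,A)$, so a run of enough such jumps within a unit time window has probability bounded below, and $T_A$ has a geometric tail.
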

\begin{proof}
We shall prove the claim when $r$ is a positive function as the other case follows from analogous arguments.

Consider the system without resetting, i.e., $Y^0:=( N^0, \Lambda^0)$, where $\Lambda^0$ is given by \eqref{eq:noreset2}.  Define an operator
\be \label{e:Ralpha}
\begin{split}
R^\alpha f(x)&= -\lambda f(x) + \alpha(\beta-\lambda)f_\lambda (x)\\
&+\lambda \int f(n+1,\lambda +\ell(n+1,y)) \,dG(y),
\end{split}
\ee
and consider the function
\[
f(n,\lambda):=\exp(r(n)(\lambda-\lambdao)).
\]
With $T_A=\inf\{t\geq 0: \Lambda_t^0\geq A\}$, an application of It\^o's formula yields
\[
\begin{split}
f(N^0_{T_A\wedge t}, \Lambda_{T_A\wedge t}^0)&= f(N^0_0, \Lambda_0^0) + \int_0^{T_A\wedge t} r(N^0_s)\alpha(\beta-\Lambda_s^0) f(N^0_s,\Lambda_s^0)ds \\
&+ \sum_{0<s\leq t\wedge T_A}\big(f(N^0_{s}, \Lambda_s^0)-f(N^0_{s-},\Lambda_{s-}^0)\big).
\end{split}
\]

Thus,
\[
f(N^0_{T_A\wedge t}, \Lambda_{T_A\wedge t}^0)= f(N^0_0, \Lambda_0^0) + M_{t\wedge T_A}+\int_0^{T_A\wedge t}R^{\alpha}f(N^0_s,\Lambda_s^0)ds,
\]
where $M$ is a local martingale.
Next,  observe that
\begin{align*}
&\kern-1em\E[\exp(r(N^0_{T_A})\Lambda_{T_A}^0)\ind_{\{T_A<\infty\}}]\\
&\leq \E\big[\exp\left(r(N_{T_A-}^0+1)(A+ \ell(N_{T_A-}^0+1, Z_{N^0_{T_A-}+1})\right)\ind_{\{T_A<\infty\}}\big]\\
&\leq K\E\big[\exp\left(r(N_{T_A-}^0+1)\ell(N_{T_A-}^0+1, Z_{N^0_{T_A-}+1})\right)\ind_{\{T_A<\infty\}}\big]\\
&=K \E[{\varphi}_{N^0_{T_A-}+1}\left(r(N^0_{T_A-}+1)\right)],
\end{align*}
where $K$ is some finite constant.

Since $\sup_n \varphi_n(r(n))<\infty$ and $r$ is bounded, it follows that the family of random variables $f(N^0_{T_A\wedge \tau}, \Lambda_{T_A\wedge \tau}^0)_{\tau}$, where $\tau$ is ranging over all finite stopping times, is uniformly integrable. This in turn implies $M^{T_A}$ is a submartingale bounded from above by an  integrable random variable, and thus $\E[M_{T_A}]\geq 0$ by Fatou's lemma. Consequently,
\[
\begin{split}
  \E[f(N^0_{T_A}, \Lambda_{T_A}^0)]&\geq  f(N^0_0, \Lambda_0^0) +\E\Big[\int_0^{T_A}R^{\alpha}f(N^0_s,\Lambda_s^0)ds\Big]\\
  &\geq f(N^0_0, \Lambda_0^0) +\E\Big[\int_0^{T_A} \alpha \beta r(N^0_s)f(N^0_s,\Lambda_s^0)ds\Big]\\
  &\geq f(N^0_0, \Lambda_0^0) + \eps \E[T_A]
\end{split}
\]
since $r$ is bounded away from $0$. Finally,
\begin{align*}
\E[f(N^0_{T_A}, \Lambda_{T_A}^0)]&=\E[f(N^0_{T_A}, \Lambda_{T_A}^0)\ind_{\{T_A<\infty\}}]+\E[f(N^0_{T_A}, \Lambda_{T_A}^0)\ind_{\{T_A=\infty\}}]\\
& \leq K\E[{\varphi}_{N^0_{T_A-}+1}\left(r(N^0_{T_A-}+1)\right)]+ \exp(\|r\|_\infty(A-\lambdao))<\infty.
\end{align*}
This completes the proof.
\end{proof}
\begin{remark}\label{r:harris}
Suppose that $\ell$ does not depend on $n$ and $\alpha>0$. Then, one can expect $r$ satisfying \eqref{eq:adjusmenteq} to be single valued. Indeed, if $\E[\ell(Z_1)]>\alpha$, there exists a unique $r^*<0$ solving \eqref{eq:adjusmenteq} with equality for each $n$.

On the other hand, if $\E[\ell(Z_1)]\leq \alpha$ and the function $\varphi$ is finite in some open  neighbourhood of $0$ in which $\varphi'$ exceeds $\alpha$, there is always a strictly positive $r$ so that \eqref{eq:adjusmenteq}  is satisfied and $\varphi(r)<\infty$.
\end{remark}
\begin{remark} Note that $\varphi_n(r(n))\leq 1$ for all $n$ when $r$ is a negative function. Thus, the condition on $\varphi$ is imposed only if the functions satisfying the conditions of the theorem are positive.

On the other hand, one can always find negative functions satisfying \eqref{eq:adjusmenteq} if $(\ell(n,Z_n))_{n\geq 1}$ are non-degenerate; that is, $\varphi_n$ is not identically $1$. Indeed, by setting $r(0)=-1/(2\alpha)$, one can recursively compute $r(n)$ for $n\geq 1$ satisfying \eqref{eq:adjusmenteq} with equality. However, there is no guarantee that the  thus obtained function~$r$ is bounded away from $0$ (see Example \ref{e:noreset}).
\end{remark}
\begin{remark}
Suppose that $\ell$  is multiplicative; that is, $\ell(n,z)=\ell(n)z$ for all~$n$. Assume further that $\ell(n)>\delta>0$ for each~$n$. Let us denote the moment generating function of $Z_i$ by $M_Z$. Then, $\varphi_n(r)= M_Z(r\ell(n))$. Suppose that there exists an $r^*\neq 0$ such that $M_Z(\delta r^*)=1+ \alpha r^*$. If $r^*>0$, we have $\varphi_n(r^*)\geq 1+ \alpha r^*$ since $M_Z$ is increasing. By the same token, if $r^*<0$, $\varphi_n(r^*)\leq 1+ \alpha r^*$. Thus, in both cases setting $r(n)=r^*$ for all $n$, we obtain a function satisfying the conditions of Theorem \ref{t:harris}.
\end{remark}

The next result treats the case $\alpha=0$. Its proof runs along similar lines and is, therefore, left to the reader.
\begin{theorem}\label{t:harris0} Suppose $\alpha =0$, let $T_A=\inf\{t\geq 0: \Lambda_t^0\ge A\}$. Consider a function $r:\bN\cup\{0\} \to \R$ that does not change sign and satisfies
\be \label{eq:adjusmenteq0}
\begin{split}
  \varphi_{n+1}(r(n+1))> 1, \quad \forall n\geq 0, \quad \mbox{if } r(0)>0;\\
  \varphi_{n+1}(r(n+1))< 1, \quad \forall n\geq 0, \quad \mbox{if } r(0)<0.
\end{split}
\ee
where $\varphi$ is as in \eqref{e:defphi}. Then, $\E^x[T_A]<\infty$ if both the following conditions hold:
\begin{enumerate} \setlength{\itemsep}{0.75em}
  \item $\sup_n \varphi_n(r(n))<\infty$ and $\sup_n r(n)<\infty$, and
  \item $\inf_{n\geq 1}|\varphi_n(r(n))-1|>0$.
  \end{enumerate}
\end{theorem}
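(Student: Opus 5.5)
We follow the proof of Theorem~\ref{t:harris}, working with the non-reset system $Y^0=(N^0,\Lambda^0)$. For $\alpha=0$ the intensity is constant between jumps, so $\Lambda^0_t=\Lambda^0_0+\sum_{j\le N^0_t}\ell(j,Z_j)$. We treat the case $r>0$; the case $r<0$ is symmetric, with the inequalities reversed and $\varphi_n(r(n))\le 1$ automatically.

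We take the same Lyapunov-type function $f(n,\lambda)=\exp(r(n)(\lambda-\lambdao))$ and the operator $R^0$, which is \eqref{e:Ralpha} with $\alpha=0$. Since there is no drift term, a direct computation gives
\[
R^0f(n,\lambda)=\lambda f(n,\lambda)\Big(\varphi_{n+1}(r(n+1))\,e^{(r(n+1)-r(n))(\lambda-\lambdao)}-1\Big).
\]
This is where the argument differs from Theorem~\ref{t:harris}. There, the factor $e^{(r(n+1)-r(n))(\cdot)}$ was absorbed via the drift and \eqref{eq:adjusmenteq}. Here we would first simplify by choosing $r$ constant along the relevant range, or by noting that the cross term can be handled as follows. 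It\^o's formula is applied to the process $\exp\big(r(N^0_t)\Lambda^0_t\big)$ in jump form. At a jump from $(n,\lambda)$ the function is multiplied by $\exp\big(r(n+1)(\lambda+\ell)-r(n)\lambda\big)$.

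To avoid this issue cleanly, the plan is to use instead the multiplicative functional
\[
U_t:=\exp\Big(\sum_{j\le N^0_t} r(j)\ell(j,Z_j)\Big).
\]
Its compensator rate at state $n$ is $\lambda U(\varphi_{n+1}(r(n+1))-1)$. By condition 2, this rate is at least $\lambda U\delta\ge \lambdao\delta$, where $\delta:=\inf_n|\varphi_n(r(n))-1|>0$ and $U\ge1$ because $r>0$. Moreover, before $T_A$ we have $\sum_{j\le N^0}\ell(j,Z_j)<A-\Lambda^0_0$, so $U_{t\wedge T_A}\le e^{\|r\|_\infty A}$ up to the last jump.

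The key steps are then as follows. First, write $U_{t\wedge T_A}=1+M_{t\wedge T_A}+\int_0^{t\wedge T_A}\Lambda^0_sU_s(\varphi_{N^0_s+1}(r(N^0_s+1))-1)\,ds$, with $M$ a local martingale. Second, bound the overshoot at $T_A$ exactly as in Theorem~\ref{t:harris}: $\E[U_{T_A}\ind_{\{T_A<\infty\}}]\le e^{\|r\|_\infty A}\sup_n\varphi_n(r(n))<\infty$. This gives uniform integrability of $U_{\tau\wedge T_A}$ over finite stopping times, so that $\E[M_{T_A\wedge t}]=0$ after localisation. Third, deduce $\lambdao\delta\,\E[T_A\wedge t]\le \E[U_{T_A\wedge t}]-1\le C$ uniformly in $t$, and let $t\to\infty$ by monotone convergence.

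The main obstacle is the uniform integrability and localisation step. Here one must check that $\Lambda^0$ is bounded before $T_A$, which holds since $\Lambda^0<A$ there. One must also check that only the single overshooting jump can be large, which is controlled by $\sup_n\varphi_n(r(n))$. For $r<0$, $U\le1$ is bounded, so this step is trivial, and the drift is $\le-\lambdao\delta U$. In that case one additionally needs $U\ge e^{-\|r\|_\infty A}$ before $T_A$, and then the same conclusion follows from $0\le \E[U_{T_A\wedge t}]\le 1-\lambdao\delta e^{-\|r\|_\infty A}\E[T_A\wedge t]$.
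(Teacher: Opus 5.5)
Switching from $f(n,\lambda)=e^{r(n)(\lambda-\lambdao)}$, the function used in the proof of Theorem~\ref{t:harris} that this proof is meant to imitate, to the multiplicative functional $U$ is a good move. It removes the factor $e^{(r(n+1)-r(n))(\lambda-\lambdao)}$, which the computation of $R^\alpha f$ there simply ignores. With $U$, your $r<0$ argument is correct, with one proviso. You use $\|r\|_\infty<\infty$, but condition~1 only gives $\sup_n r(n)<\infty$, which says nothing when $r<0$. Boundedness of $|r|$ is genuinely needed: $\ell(n,\cdot)\equiv 2^{-n}$ with $r(n)=-2^n$ satisfies every hypothesis (since $\varphi_n(r(n))=e^{-1}$), yet $\Lambda^0\le\Lambda^0_0+1$ forever.

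The genuine gap is your second step for $r>0$, the overshoot bound $\E[U_{T_A}\ind_{\{T_A<\infty\}}]\le e^{\|r\|_\infty A}\sup_n\varphi_n(r(n))$. It copies the last equality in the overshoot estimate of Theorem~\ref{t:harris}, which is invalid there too. The jump that produces the crossing is size-biased: on $\{N^0_{T_A-}=n\}$ the mark $Z_{n+1}$ is conditioned on $\ell(n+1,Z_{n+1})\ge A-\Lambda^0_{T_A-}$, so its exponential moment is not $\varphi_{n+1}(r(n+1))$. Conditioning correctly on $\cF_{T_n}$ only gives
\[
\E[U_{T_A}\ind_{\{T_A<\infty\}}]\le e^{\|r\|_\infty A}\sup_n\varphi_n(r(n))\sum_{n\ge 0}\P(T_n<T_A)=e^{\|r\|_\infty A}\sup_n\varphi_n(r(n))\,\E[N^0_{T_A}].
\]
Here $\E[N^0_{T_A}]=\E\int_0^{T_A}\Lambda^0_s\,ds$ lies between $\lambdao\E[T_A]$ and $A\,\E[T_A]$, so the argument is circular.

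It also cannot be repaired, because the $r>0$ half of the statement is false. Take $Z_n$ uniform on $[0,1]$, $r\equiv 1$, $p_n=ne^{-n}$ and $\ell(n,z)=2^{-n}+n(z-1+p_n)^+/p_n$. Then $\ell(n,Z_n)=2^{-n}$ with probability $1-p_n$, and is uniform on $[2^{-n},2^{-n}+n]$ otherwise. This gives $\varphi_n(1)=e^{2^{-n}}(2-(n+1)e^{-n})\in[2-2e^{-1},2e^{1/2}]$, so every hypothesis holds. Yet $\sum_n p_n<\infty$, so with probability $\prod_n(1-p_n)>0$ all jumps equal $2^{-n}$. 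On that event $\Lambda^0\le\Lambda^0_0+1$ forever, so $\E^x[T_A]=\infty$ for any $A>\Lambda^0_0+1$. The same example, with $0<\alpha\le 1-2e^{-1}$ and $A>\max(\Lambda^0_0,\beta)+1$, also contradicts Theorem~\ref{t:harris}.

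Your argument does close under an extra hypothesis controlling the overshoot, for instance $\E[e^{r(n)\ell(n,Z_n)};\ell(n,Z_n)\ge a]\le C\,\P(\ell(n,Z_n)\ge a)$ for all $n$ and $0<a\le A$, which holds for uniformly bounded jumps. Then $\E[U_{T_A}\ind_{\{T_A<\infty\}}]\le C e^{\|r\|_\infty A}\,\P(T_A<\infty)$, and your third step goes through.
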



\section{Stationary measures for a self-exciting PDMP with resetting} \label{sec:stationary}
Let $X$ be a self-exciting PDMP with resetting. Since $C$ is increasing, $X$ will not possess an invariant distribution. Nevertheless, if one considers  $Y=(N, \Lambda)$, it is possible to show the existence of a stationary measure that is unique up to a scaling factor under suitable conditions.


We shall denote the invariant measure of $Y$
by $\Pi$. To find an equation that $\Pi$ satisfies, it will be helpful to consider the following representation for its generator, still denoted by $Q^\alpha$ to ease notation.
\begin{proposition}  Let $X=(N,\Lambda,C)$  be a self-exciting PDMP with resetting and $Y=(N,\Lambda)$. The generator of  $Y$ is given by
  \be \label{e:QalphaY}
  \begin{split}
    Q^\alpha f(n, \lambda)
    =& -\lambda f(n, \lambda) + \alpha(\beta-\lambda)f_\lambda (n, \lambda)\\
    &+\lambda \int\ind_{\{u < A-\lambda\}} f(n+1,u+\lambda) \,dG_{\ell}(n+1, u)\\
    &+\lambda f(0,\lambdao) \bar{G}_{\ell}(n+1, (A-\lambda) \minus),
  \end{split}
  \ee
  where  $G_{\ell}(n, \cdot)$ is the distribution function defined in \eqref{e:Gell}.
\end{proposition}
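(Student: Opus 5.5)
The plan is to obtain the generator of $Y=(N,\Lambda)$ by restricting the generator $Q^\alpha$ of $X$ in \eqref{e:Qalpha} to test functions that do not depend on the third coordinate, and then rewriting the integrals against $G$ as integrals against $G_\ell(n+1,\cdot)$. Concretely, let $f$ be a function of $(n,\lambda)$ in the appropriate domain, i.e.\ bounded, vanishing at infinity in $n$ and continuously differentiable in $\lambda$. Set $\tilde f(n,\lambda,z):=f(n,\lambda)$. I would first check that $\tilde f$ can be approximated in the graph norm of the closure of $Q^\alpha$ from Theorem~\ref{thm:hilleyosida}. The point is that $\tilde f$ does not vanish as $z\to\infty$, so I would multiply by a cutoff $\chi_k(z)$ that increases to $1$. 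Then $Q^\alpha(\tilde f\chi_k)\to Q^\alpha\tilde f$ boundedly and pointwise, since the only $z$-dependence enters through the shifts $z+y$. Alternatively, I would argue directly, as in the heuristic computation \eqref{eq:E[F(X)]}: the law of $(N_h,\Lambda_h)$ given $X_0$ does not depend on $C_0$, so $Y$ is itself Markov, with semigroup $P_t\tilde f$.

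The next step is to plug $\tilde f$ into \eqref{e:Qalpha}. The $z$-argument disappears and we obtain
\[
-\lambda f(n,\lambda)+\alpha(\beta-\lambda)f_\lambda(n,\lambda)+\lambda\int \ind_{\{\ell(n+1,y)<A-\lambda\}} f(n+1,\lambda+\ell(n+1,y))\,dG(y)+\lambda f(0,\lambdao)\int\ind_{\{\ell(n+1,y)\ge A-\lambda\}}\,dG(y).
\]
To finish, I would perform the change of variables $u=\ell(n+1,y)$. By the discussion following \eqref{def:ell.inv}, $G_\ell(n+1,\cdot)$ is the distribution function of $\ell(n+1,Z_{n+1})$, so the image of $G$ under $y\mapsto \ell(n+1,y)$ is $dG_\ell(n+1,u)$. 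This turns the third term into $\lambda\int\ind_{\{u<A-\lambda\}}f(n+1,u+\lambda)\,dG_\ell(n+1,u)$. For the last term, $\P(\ell(n+1,Z)\ge A-\lambda)=1-G_\ell(n+1,(A-\lambda)-)=\bar G_\ell(n+1,(A-\lambda)\minus)$, and this gives \eqref{e:QalphaY}. The same left-limit bookkeeping was already used in the proof of Theorem~\ref{thm:mart}.

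The main obstacle is the domain issue in the first step: justifying that $Y$ is Markov with generator obtained by this restriction, given that Theorem~\ref{thm:hilleyosida} works on functions vanishing at infinity in $z$. I would handle it through the cutoff approximation together with the fact that $P_t$ commutes with translations in $z$. The latter follows from the translation invariance of $Q^\alpha$ in $z$ and the uniqueness of the semigroup. The change of variables itself is routine.
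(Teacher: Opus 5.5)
Your argument is correct and follows the paper's own route. The paper states this proposition without a separate proof; the formula is obtained exactly as you do, by inserting a $z$-independent test function into \eqref{e:Qalpha} and pushing $G$ forward under $y\mapsto \ell(n+1,y)$, which is the same bookkeeping (including the left limit $\bar G_\ell(n+1,(A-\lambda)\minus)$) used in the proof of Theorem~\ref{thm:mart}, so your domain discussion is extra rigour rather than a different proof. One wording fix: since $\tilde f\notin\widehat{C}(S)$, it cannot be a graph-norm limit of $\tilde f\chi_k$; what your cutoff argument actually yields, via bounded pointwise convergence in the Dynkin martingales of $\tilde f\chi_k$, is that $f(Y_t)-\int_0^t Q^\alpha f(Y_s)\,ds$ is a martingale, i.e.\ the formula identifies the extended generator of $Y$, and together with your translation-invariance argument for the Markov property of $Y$ this is what is needed.
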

We shall denote the restriction of $S$ to $\{\bN \cup\{0\}\}\times [\lambdao,A)$ by $S_Y$ and the corresponding $\sigma$-algebra by $\mathcal{S}_Y$. To ease the notation we shall still denote the associated semigroup by $(P_t)_{t\geq 0}$.


\
\begin{theorem}
  Suppose that $\P^{\,\alpha,y}(T_A<\infty)=1$ for all $y\in S_Y$. Then,  $Y$ is Harris recurrent with unique (up to scaling) invariant measure given by
  \[
    \Pi(B)= \E^{\alpha,y^*}\int_0^{T_A} \ind_B(Y_t)dt,
  \]
  where $y^*=\{0\}\times\{\lambdao\}$. Moreover, $\Pi$ is finite if and only if $\E^{\alpha,y^*}[T_A]<\infty$.
\end{theorem}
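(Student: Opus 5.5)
The plan is to exploit the regeneration structure at the reset state $y^*=(0,\lambdao)$ and invoke the Kaspi--Mandelbaum characterisation of Harris recurrence. That characterisation says a right process is Harris recurrent if there is a $\sigma$-finite measure $\nu$ such that $\nu(B)>0$ implies $\int_0^\infty \ind_B(Y_t)\,dt=\infty$ a.s. under every $\P^{\,\alpha,y}$. In that case the invariant measure is unique up to a constant.

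\textbf{Step 1: recurrence of the reset point.} Let $\tau_1=T_A<\tau_2<\cdots$ be the successive reset times. At each $\tau_k$ the process sits at $Y_{\tau_k}=y^*$. Because $T_A$ is a stopping time at which $Y$ is in a deterministic state, the strong Markov property of the Feller process from Theorem~\ref{thm:hilleyosida} gives two things: the cycles $(Y_{\tau_k+t})_{0\le t<\tau_{k+1}-\tau_k}$ are i.i.d. with the law of $(Y_t)_{t<T_A}$ under $\P^{\,\alpha,y^*}$, and they are independent of $\mathcal F_{\tau_k}$. By hypothesis $T_A<\infty$ a.s. from every starting point, including $y^*$, so $\tau_k<\infty$ for all $k$. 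Moreover $\tau_k\to\infty$, since the cycle lengths are i.i.d. and strictly positive.

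\textbf{Step 2: Harris recurrence with $\nu=\Pi$.} Take $\nu:=\Pi$ as defined in the statement. If $\Pi(B)>0$, then $\xi_k:=\int_{\tau_k}^{\tau_{k+1}}\ind_B(Y_t)\,dt$ are i.i.d. and satisfy $\E\xi_1=\Pi(B)>0$, so $\P(\xi_1>0)>0$. By Borel--Cantelli (second lemma, using independence), infinitely many $\xi_k$ are positive. Since the $\xi_k$ are i.i.d. nonnegative and not a.s. zero, the strong law of large numbers, which holds even when the mean is infinite, gives $\sum_k\xi_k=\infty$ a.s. This works from any initial point $y$, because Step 1 applies after $\tau_1$. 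We also need $\Pi$ to be $\sigma$-finite. I would cover $S_Y$ by the sets $\{n\}\times[\lambdao,A)$. Within one cycle, the time spent with $N=n$ is the holding time of the $n$th attack level. That holding time is dominated by an exponential with rate $\ge\lambdao$, because $\Lambda\ge\lambdao$ always. Hence $\Pi(\{n\}\times[\lambdao,A))\le 1/\lambdao<\infty$.

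\textbf{Step 3: invariance of $\Pi$.} This is the standard cycle argument. For bounded measurable $f$ and $s>0$, I would compute
\[
\int P_s f\,d\Pi=\E^{\alpha,y^*}\int_0^{T_A}f(Y_{t+s})\,dt ,
\]
using the Markov property at time $t$ on the event $\{t<T_A\}$, which lies in $\mathcal F_t$. I would then split the integral as $\int_s^{T_A+s}$. The piece $\int_{T_A}^{T_A+s}f(Y_u)\,du$ has the same expectation as $\int_0^s f(Y_u)\,du$ under $\P^{\,\alpha,y^*}$, by the strong Markov property at $T_A$, since $Y_{T_A}=y^*$. This recovers $\int f\,d\Pi$. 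When $\Pi$ is infinite, I would do the computation for $f$ supported on a set of finite $\Pi$-measure, or with $f\ge0$ and monotone limits, so that nothing becomes $\infty-\infty$. Uniqueness up to scaling then follows from Harris recurrence via Kaspi--Mandelbaum.

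\textbf{Step 4: finiteness.} Finally, $\Pi(S_Y)=\E^{\alpha,y^*}[T_A]$, so $\Pi$ is finite exactly when this expectation is finite.

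I expect the main obstacle to be the technical verification that $Y$ is a Borel right process, which the Kaspi--Mandelbaum framework requires. This should follow from the Feller semigroup of Theorem~\ref{thm:hilleyosida} together with right-continuous paths. A second delicate point is the boundary behaviour at $A$: I need $Y_{T_A}=y^*$ exactly, with no mass escaping to the point $A$ itself. That holds because the state space $S_Y$ excludes $A$ and the jump kernel sends every crossing of $A$ to $(0,\lambdao)$.
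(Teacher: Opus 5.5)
Your proposal is correct and follows the same route as the paper: both rest on the regeneration of $Y$ at the reset time $T_A$ (where $Y_{T_A}=y^*$) within the Kaspi--Mandelbaum framework. The paper simply invokes the regenerative example on p.~212 of \cite{KaspiMandelbaum1994}, after noting that $T_A$ is a terminal time with $\P^{\,\alpha,y^*}(T_A>0)=1$; you instead verify that example's conclusions by hand (Harris recurrence with $\nu=\Pi$ via i.i.d.\ cycles, $\sigma$-finiteness via the bound $\Pi(\{n\}\times[\lambdao,A))\le 1/\lambdao$, and invariance via the cycle computation), which fills in details the paper leaves implicit.
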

\begin{proof}
  The claim  follows from the example at the bottom of p. 212 in \cite{KaspiMandelbaum1994}. Indeed, $T_A$ is a terminal time and $\P^{\,\alpha,y^*}(T_A>0)=1$ by the definition of $T_A$. Moreover, if we denote the natural filtration of $Y$ by $(\cF^Y_t)_{t\geq 0}$ and set $\cF^Y_\infty:=\vee_{t\geq 0}\cF^Y_t$, we immediately have
  \[
    \E^{\alpha,y}[\mathcal{Z}\circ \theta_{T_A}|\cF^Y_{T_A-}]=\E^{\alpha,y^*}[\mathcal{Z}], \qquad \forall y\in S_Y, \,\mathcal{Z}\in \cF^Y_\infty,
  \]
  where $\theta$ is the shift operator since the process regenerates at $T_A$.
\end{proof}

We will next compute the invariant measure for  $Y$ by considering three cases separately: 
$(\alpha>0\ \&\  \beta>\lambdao)$, 
$(\alpha>0\ \&\ \beta=\lambdao)$, 
and $\alpha=0$.
Throughout this section the following assumption will be enforced:
\begin{assumption} \label{a:finitereset} $\P^{\,\alpha,y}(T_A<\infty) = 1$ for all $y\in S_Y$.
\end{assumption}
Recall that Theorems \ref{t:alph0finreset}, \ref{t:harris} and \ref{t:harris0} give necessary and sufficient conditions for the finiteness of $T_A$ and its expectation.


\begin{theorem}\label{thm:stat.dist.alpha>0.beta>lambda0}  Let $X=(N,\Lambda,C)$  be a self-exciting PDMP with resetting and $Y=(N,\Lambda)$. Under Assumption \ref{a:finitereset} holds and with $\alpha>0\ \&\  \beta>\lambdao$, $Y$ has invariant measure $\Pi$ on $\bN_0\times[\lambdao, A)$, given by the following:
  \be \label{e:stdistycase1}
    \Pi(n, d\lambda)=\pi(n,\lambda)d\lambda 
    \ee
    whose density is defined as
\be \label{e:stdistycase1.pi}
  \begin{split}
    \pi(0,\lambda)=&\ind_{\{\lambda <\beta\}}\frac{K_0}{\alpha  (\beta -\lambdao)} \left(\frac{\beta -\lambda }{\beta -\lambdao}\right)^{\frac{\beta -\alpha }{\alpha }} e^{\frac{\lambda - \lambdao}{\alpha}},\\
    \pi(n,\lambda)=&\ind_{\{\lambda<\beta\}}\frac{1}{F(\lambda)}\int_{\lambdao}^{\lambda} \frac{F(y)p(n-1,y)}{\alpha(\beta-y)}dy\\
    &+\ind_{\{\lambda\geq \beta\}}\frac{1}{F(\lambda)}\int_{\lambda}^{A} \frac{F(y)p(n-1,y)}{\alpha(y-\beta)}dy, \quad n\geq 1.
  \end{split}
  \ee
  For $n\ge 0$, 
      \be \label{e:p.n.lambda}
    p(n,\lambda) = \int \ind_{\{u+\lambdao<\lambda\}} (\lambda-u) \pi(n,\lambda-u)\,dG_{\ell}(n+1, u),
  \ee
    \be \label{e:F.lambda}
    F(\lambda)=\exp(-\frac{\lambda}{\alpha})|\lambda-\beta|^{\frac{\alpha-\beta}{\alpha}},
  \ee
  and $K_0=\sum_{n=1}^\infty \int_{[\lambdao,A)} \lambda \bar{G}_{\ell}(n, (A-\lambda) \minus) \Pi(n-1, d \lambda)$.
\end{theorem}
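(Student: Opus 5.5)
Since the Harris recurrence theorem above gives existence and uniqueness (up to scaling) of $\Pi$, it suffices to find a measure $\Pi$ on $S_Y$ with $\int Q^\alpha f\,d\Pi=0$ for all $f$ in a core of test functions, e.g.\ $f(n,\lambda)=\ind_{\{n=m\}}g(\lambda)$ with $g\in C^1_c$ or $C^1$ on $[\lambdao,A]$. We then check that the displayed densities solve the resulting adjoint equations. Uniqueness reduces the verification to sufficiency. Alternatively, we can identify $\Pi$ with the occupation measure $\E^{\alpha,y^*}\int_0^{T_A}\ind_B(Y_t)dt$ and derive the equations directly from it.

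\textbf{Step 1 (adjoint equations).} Write $\Pi(n,d\lambda)=\pi(n,\lambda)d\lambda$ for $n\ge1$. For $n=0$ we also have density $\pi(0,\cdot)$, but mass is injected at the point $\lambdao$. Plug $f=\ind_{\{n=m\}}g$ into \eqref{e:QalphaY} and integrate by parts in the drift term $\alpha(\beta-\lambda)g'(\lambda)$. Collecting coefficients of $g$ gives the stationary forward equations
\[
\frac{d}{d\lambda}\big[\alpha(\beta-\lambda)\pi(n,\lambda)\big]=-\lambda\pi(n,\lambda)+p(n-1,\lambda),\qquad n\ge1,
\]
with $p$ as in \eqref{e:p.n.lambda}. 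This term comes from the change of variables $\lambda\mapsto\lambda+u$ in the jump integral. The boundary terms read as follows. At $\lambda=\lambdao$ the flux $\alpha(\beta-\lambdao)\pi(0,\lambdao)$ must equal the total reset rate $K_0=\sum_n\int\lambda\bar G_\ell(n,(A-\lambda)\minus)\Pi(n-1,d\lambda)$, which is the coefficient of $g(\lambdao)$. At $\lambdao$ we need $\pi(n,\lambdao)=0$ for $n\ge1$. The flux at $A$ must vanish, since the drift points inward when $A>\beta$. For $n=0$ there is no incoming jump term, so $\frac{d}{d\lambda}[\alpha(\beta-\lambda)\pi(0,\lambda)]=-\lambda\pi(0,\lambda)$ on $(\lambdao,\beta)$. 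Since $\lambdao<\beta$ and $\Lambda$ moves towards $\beta$ between jumps without reaching it, $\pi(0,\cdot)$ vanishes on $[\beta,A)$.

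\textbf{Step 2 (solving the ODEs).} Set $h=\alpha(\beta-\lambda)\pi$. The equation becomes $h'=\frac{\lambda}{\alpha(\beta-\lambda)}h\cdot(-1)+p$, which is linear of first order. Its integrating factor is $\exp\!\big(\int\frac{\lambda}{\alpha(\beta-\lambda)}d\lambda\big)$. Since $\frac{\lambda}{\beta-\lambda}=-1+\frac{\beta}{\beta-\lambda}$, this factor equals $e^{-\lambda/\alpha}|\beta-\lambda|^{-\beta/\alpha}$. Combining it with the factor $|\beta-\lambda|$ relating $h$ and $\pi$ produces $F(\lambda)$ in \eqref{e:F.lambda}. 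For $n=0$, using the boundary flux $K_0$ recovers the stated closed form $\pi(0,\lambda)$. For $n\ge1$ we integrate separately on each side of the singular point $\beta$. On $[\lambdao,\beta)$ we integrate from $\lambdao$, where $\pi(n,\lambdao)=0$. On $[\beta,A)$ we integrate from $A$ down, using zero flux at $A$. This yields the two integrals in \eqref{e:stdistycase1.pi}. The recursion is well-founded because $p(n-1,\cdot)$ depends only on $\pi(n-1,\cdot)$.

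\textbf{Main obstacle.} The delicate part is the singular point $\lambda=\beta$ and the rigorous justification of the boundary conditions. We must show that the adjoint identity holds across $\beta$: no flux is generated there, and the solutions are locally integrable near $\beta$. Since $F(\lambda)\sim|\lambda-\beta|^{(\alpha-\beta)/\alpha}$, integrability needs a check when $\beta>\alpha$. To handle this we would use test functions supported away from $\beta$ together with a limiting argument. We would also use the fact that the deterministic flow never crosses $\beta$, so that $\{\lambda<\beta\}$ and $\{\lambda\ge\beta\}$ communicate only through jumps. We would also need $K_0<\infty$. This follows from Assumption \ref{a:finitereset}, which gives finiteness of the reset-rate functional with respect to the occupation measure; otherwise we can work with $\sigma$-finite $\Pi$, in line with the Harris theorem.
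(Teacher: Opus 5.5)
Your proposal is correct and is essentially the paper's proof. The paper likewise tests $\sum_n\int Q^\alpha f\,d\Pi=0$ level by level and integrates the drift term by parts, which gives $(\lambda-\alpha)\pi(n,\lambda)+\alpha(\beta-\lambda)\pi'(n,\lambda)=p(n-1,\lambda)$ with $\alpha(\beta-\lambdao)\pi(0,\lambdao)=K_0$, $\pi(n,\lambdao)=0$ for $n\ge1$ and $\pi(n,A)=0$. It then solves with the integrating factor $F$, from $\lambdao$ on $[\lambdao,\beta)$ and from $A$ on $[\beta,A)$, allowing a jump of $\pi(n,\cdot)$ at $\beta$ since the drift coefficient vanishes there. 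The step you flag at $\beta$ is settled in the paper by showing $(\beta-\lambda)\pi(n,\lambda)\to0$ as $\lambda\to\beta$, using de l'H\^opital when the integral in the formula diverges and induction on $n$ from the explicit $\pi(0,\cdot)$; this is exactly what removes the boundary terms at $\beta\pm\varepsilon$ in your limiting argument.
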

  \begin{proof} 
  The detailed proof is provided in Appendix \ref{app:thm:stat.dist.alpha>0.beta>lambda0}.
  \end{proof}

  \begin{theorem}\label{thm:stat.dist.alpha>0.beta=lambda0}
    \addcontentsline{toc}{subsection}{Theorem \thetheorem\ {case \texorpdfstring{$\beta=\lambda_o$}{Theorem β=λₒ}} and \texorpdfstring{$\alpha>0$}{α>0}}
    Let $X=(N,\Lambda,C)$  be a self-exciting PDMP with resetting and $Y=(N,\Lambda)$.  Under Assumption \ref{a:finitereset} and with $\alpha>0\ \&\ \beta=\lambdao$,
    the invariant measure $\Pi$ for $Y$ is given by the following:
    \be \label{e:stdistycase2}
    \begin{split}
      \Pi(0,\{\lambdao\}) &= K_0/\lambdao\\
      \Pi(n, d\lambda)&=\pi(n,\lambda)d\lambda, \quad n\geq 1, \lambda \in (\lambdao, A),
                \end{split}
    \ee
    where the density is given by 
          \be \label{e:stdistycase2.pi}
    \begin{split}
      \pi(1,\lambda)&= {\frac{K_0}{\alpha}\int\ind_{\{\lambda<y+\lambdao<A\}} \frac{F(\lambdao + y)}{yF(\lambda)} \,dG_{\ell}(1, y)}, \mbox{ and} \\
      \pi(n,\lambda)&=
      \frac{1}{\alpha}\int_{\lambda-\lambdao}^{A-\lambdao} \frac{F(\lambdao + y)}{yF(\lambda)} p(n-1,\lambdao + y) \, dy, \; \mbox{ for } n\geq 2
          \end{split}
    \ee
    with $F(\lambda)$ defined in~\eqref{e:F.lambda}, and for $n\ge2$,
\be \label{e:stdistycase2.p}
      p(n-1,\lambda) = \int\ind_{\{u + \lambdao< \lambda\}} (\lambda-u) \pi(n-1,\lambda-u) \,dG_{\ell}(n, u).
    \ee
    The constant $K_0$ is equal to 
    \be \label{e:stdistycase2.K0}
    K_0=\sum_{n=1}^\infty \int_{[\lambdao,A)} \lambda \bar{G}_{\ell}(n, (A-\lambda) \minus) \Pi(n-1, d \lambda) \ .
    \ee
  \end{theorem}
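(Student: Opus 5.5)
The plan is to identify $\Pi$ through the stationarity condition $\int Q^\alpha f\,d\Pi=0$ for all $f\in\widehat{C}_1(S_Y)$, using the generator representation \eqref{e:QalphaY}. By the Harris recurrence theorem above, under Assumption \ref{a:finitereset} the invariant measure exists and is unique up to scaling. It therefore suffices to find \emph{one} $\sigma$-finite measure satisfying this weak equation and to check that it has the stated form.

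\emph{Structure of $\Pi(0,\cdot)$.} Since $\lambdao=\beta$, the drift $\alpha(\beta-\lambda)$ vanishes at $\lambdao$. After each reset the process therefore sits at $(0,\lambdao)$ until the next attack, so $\Pi(0,\cdot)$ is a point mass at $\lambdao$. For $n\geq1$ we have $\Lambda>\beta$ on $(\lambdao,A)$, where the drift is strictly negative. The flow $\dot\lambda=\alpha(\beta-\lambda)$ approaches $\beta=\lambdao$ only asymptotically, so no mass reaches $\lambdao$ from above in finite time. Hence $\Pi(n,\cdot)$ for $n\ge1$ lives on $(\lambdao,A)$, and I will look for a density there.

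Testing with $f=\ind_{\{(0,\lambdao)\}}$, suitably approximated by elements of the domain, gives the flux balance at the atom. The outflow $\lambdao\,\Pi(0,\{\lambdao\})$ must equal the total reset inflow
\[
\sum_{n\ge1}\int\lambda\,\bar G_\ell(n,(A-\lambda)\minus)\,\Pi(n-1,d\lambda)=K_0 .
\]
This yields $\Pi(0,\{\lambdao\})=K_0/\lambdao$.

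\emph{Equations for $n\geq1$.} Testing with $f(m,\lambda)=\ind_{\{m=n\}}g(\lambda)$, with $g$ smooth and compactly supported in $(\lambdao,A)$, and integrating the drift term by parts gives the forward (Fokker--Planck) equation in the distributional sense:
\[
\frac{d}{d\lambda}\big[\alpha(\lambda-\beta)\pi(n,\lambda)\big]=\lambda\pi(n,\lambda)-p(n-1,\lambda).
\]
Here $p(n-1,\cdot)$ is the density of jumps from level $n-1$ landing at $\lambda$ without crossing $A$, as in \eqref{e:stdistycase2.p}. For $n=1$ the source comes from the atom, namely the measure $\lambdao\Pi(0,\{\lambdao\})\,dG_\ell(1,\lambda-\lambdao)=K_0\,dG_\ell(1,\lambda-\lambdao)$.

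With $F$ as in \eqref{e:F.lambda}, a direct computation shows that $F$ is exactly the integrating factor:
\[
\frac{d}{d\lambda}\big[\alpha F(\lambda)\pi(n,\lambda)\big]=-\frac{F(\lambda)\,p(n-1,\lambda)}{\lambda-\beta}.
\]
Since the drift at $A$ points inward, no mass enters through $A$, which gives the boundary condition $F\pi(n,\cdot)\to0$ as $\lambda\uparrow A$. Integrating from $\lambda$ to $A$ and substituting $y\mapsto\lambdao+y$ (recall $\beta=\lambdao$) gives \eqref{e:stdistycase2.pi} for $n\ge2$. For $n=1$ the same integration against the measure $K_0\,dG_\ell(1,\cdot)$ gives the displayed formula for $\pi(1,\lambda)$.

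\emph{Verification and the main obstacle.} To conclude, I reverse the computation. For the candidate $\Pi$ and arbitrary $f\in\widehat{C}_1(S_Y)$, I verify $\int Q^\alpha f\,d\Pi=0$ by splitting into the atom, the drift terms (integration by parts), the non-reset jump terms (Fubini, reproducing $p(n-1,\cdot)$) and the reset terms (reproducing $K_0 f(0,\lambdao)$). Uniqueness then follows from the Harris theorem.

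The main obstacle I expect is rigour at the boundaries. First, the integration by parts needs the boundary terms at $A$ and at $\lambdao$ to vanish. At $A$ this is the condition $F\pi\to0$. At $\lambdao$ it requires controlling $F(\lambda)^{-1}\sim(\lambda-\beta)^{(\beta-\alpha)/\alpha}$, which is integrable against the $1/y$ factor because the integrals run over $[\lambda-\lambdao,A-\lambdao]$. Second, the interchange of the sum over $n$ with the integrals must be justified. Here I would use that the total mass $\sum_n\Pi(n,\cdot)$ equals $\E^{\alpha,y^*}[T_A]$-type quantities (possibly infinite but $\sigma$-finite), so that $K_0$ is well defined as the expected number of resets per cycle, normalised to $1$ by regeneration. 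Monotone convergence then handles the nonnegative terms.
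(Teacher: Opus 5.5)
Your proposal is correct and follows essentially the same route as the paper. You test the stationarity identity $\sum_n\int Q^\alpha f\,d\Pi=0$ level by level, read off the atom $\Pi(0,\{\lambdao\})=K_0/\lambdao$, and solve the resulting first-order forward equation with the integrating factor $F$ and the boundary condition $\pi(n,A\minus)=0$; the differences are minor. You write the equation in the distributional sense, which absorbs the paper's explicit decomposition of $d\pi(1,\cdot)$ and $dG_{\ell}(1,\cdot)$ into absolutely continuous and singular parts, and you add a verification step with uniqueness via the Harris theorem.
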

    \begin{proof} 
  The detailed proof is provided in Appendix \ref{app:thm:stat.dist.alpha>0.beta=lambda0}.
  \end{proof}

    \begin{remark} With $n\ge1$, we have that,
    \be
    \pi(n,\lambda) = \pi(n,\lambdao + \ell_n)\left(\frac{\lambda-\lambdao}{\ell_n}\right)^{\frac{\lambdao-\alpha}{\alpha}} e^{-\frac{\lambdao+\ell_n-\lambda}{\alpha}},  \quad \lambda\in (\lambdao, \lambdao + \ell_n).
    \ee
    \end{remark}


  \begin{theorem}\label{thm:stat.dist.alpha=0}  Let $X=(N,\Lambda,C)$  be a self-exciting PDMP with resetting and $Y=(N,\Lambda)$. Under Assumption \ref{a:finitereset} holds and with $\alpha=0$, there exists an invariant measure $\Pi$ for $Y$, given by the following, for $\lambda \in (\lambdao, A)$:
    \be \label{e:stdistycase3}
    \begin{split}
      \Pi(0, d \lambda) &=\frac{K_0}{\lambdao} \delta_{\lambdao}(d\lambda)\\
      \Pi(1, d \lambda) &=\frac{K_0}{\lambda}\,dG_{\ell}(1, \lambda-\lambdao)\\
      \Pi(n, d \lambda) &=\frac{1}{\lambda}  d\overline G_{\ell}(n,\lambda)
    \end{split}
    \ee
    with $K_0=\sum_{n=1}^\infty\int_{[\lambdao,A)}\lambda \bar{G}_{\ell}(n, (A-\lambda) \minus)\Pi(n-1,d\lambda)$ and
    where $\overline \Pi(n,d\lambda) := \lambda \Pi(n,d\lambda)$
    and $\overline G_{\ell}^\star(n,\cdot) :=G_{\ell}(n,\cdot) \star \overline \Pi(n-1,\cdot)$.
  \end{theorem}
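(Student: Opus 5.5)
The plan is to compute $\Pi$ directly from the regenerative representation $\Pi(B)=\E^{y^*}\int_0^{T_A}\ind_B(Y_t)\,dt$ provided by the Harris recurrence theorem of this section, which applies under Assumption \ref{a:finitereset}. The formula claimed for $\Pi(n,\cdot)$ should be read as the recursion
\[
\lambda\,\Pi(n,d\lambda)=\ind_{\{\lambda<A\}}\int \overline\Pi(n-1,d\lambda')\,dG_\ell(n,\lambda-\lambda'),\qquad n\ge 1,
\]
that is, $\overline\Pi(n,\cdot)=\overline G^\star_\ell(n,\cdot)$ restricted to $[\lambdao,A)$.

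\textbf{Step 1: embedded jump chain.} Since $\alpha=0$, $\Lambda$ is constant between attacks. Under $\P^{y^*}$, before $T_A$ the process visits the states $(0,\lambdao),(1,\Lambda_{T_1}),(2,\Lambda_{T_2}),\dots$ in order. Each state is visited at most once, because $N$ strictly increases until the reset.

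Define the visit measures $\nu_n(B):=\P^{y^*}(N \text{ reaches } n \text{ before } T_A,\ \Lambda_{T_n}\in B)$. We have $\nu_0=\delta_{\lambdao}$. By the strong Markov property at $T_{n-1}$, the next jump adds an independent amount with law $G_\ell(n,\cdot)$, and the move is kept only if the new value is below $A$. Hence
\[
\nu_n(d\lambda)=\ind_{\{\lambda<A\}}\int\nu_{n-1}(d\lambda')\,dG_\ell(n,\lambda-\lambda').
\]
This convolution structure is exactly what appears in $\overline G^\star_\ell$.

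\textbf{Step 2: holding times.} Conditionally on the state $(n,\lambda)$ being visited, the sojourn there is exponential with rate $\lambda$: the generator \eqref{e:QalphaY} with $\alpha=0$ has total jump rate $\lambda$, whether the jump leads to $n+1$ or to a reset. Its mean is therefore $1/\lambda$. Applying the strong Markov property at $T_n$ and Fubini gives
\[
\Pi(n,d\lambda)=\frac{1}{\lambda}\,\nu_n(d\lambda).
\]
Setting $\overline\Pi(n,d\lambda)=\lambda\Pi(n,d\lambda)=\nu_n(d\lambda)$, Step 1 yields the recursion displayed above. The cases $n=0$ and $n=1$ then read $\Pi(0,\cdot)=\delta_{\lambdao}/\lambdao$ and $\Pi(1,d\lambda)=\lambda^{-1}dG_\ell(1,\lambda-\lambdao)$.

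\textbf{Step 3: normalisation and $K_0$.} Since $\Pi$ is unique only up to scaling, I will multiply every $\Pi(n,\cdot)$ by a constant $K_0$; this preserves the linear recursion. To identify $K_0$ with the flux expression in the statement, I will note that under $\P^{y^*}$ exactly one reset occurs per cycle. Decomposing the reset event according to the last pre-reset state gives
\[
1=\sum_{n\ge1}\int \bar G_\ell\big(n,(A-\lambda)\minus\big)\,\nu_{n-1}(d\lambda)=\sum_{n\ge1}\int \lambda\,\bar G_\ell\big(n,(A-\lambda)\minus\big)\,\Pi(n-1,d\lambda).
\]
After rescaling, the right-hand side equals $K_0$. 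This is consistent: it is the balance of flow into $(0,\lambdao)$, where the rate in from all resets equals the rate $\lambdao\Pi(0,\{\lambdao\})$ out.

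\textbf{Cross-check.} As an independent verification, I will check $\int Q f\,d\Pi=0$ for $f\in\widehat C_1(S_Y)$ using \eqref{e:QalphaY} with $\alpha=0$. The term $-\lambda f(n,\lambda)$ integrated against $\Pi(n,\cdot)$ cancels the inflow term from level $n-1$ by the recursion, and the reset terms cancel $-\lambdao f(0,\lambdao)\Pi(0,\{\lambdao\})$ by the definition of $K_0$.

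\textbf{Expected obstacle.} The main difficulty is rigour in Step 2: justifying the exchange of expectation with the random sum over visited states, and handling the restriction $\lambda<A$ with the left limit $(A-\lambda)\minus$ in the reset indicator. I would handle both by conditioning on $\cF_{T_n}$ and applying monotone convergence to the sum over $n$. Finiteness of the series follows from $T_A<\infty$ a.s.
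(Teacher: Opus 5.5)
Your proof is correct, but it takes a different route from the paper's.

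\textbf{The paper's route.} The paper argues analytically. It writes the stationarity condition $\sum_n\int Q^0f\,d\Pi=0$ for $\alpha=0$ and inserts test functions supported on a single level $n$. From this it reads off in turn
$\lambdao\,\Pi(0,\{\lambdao\})=K_0$, then $\lambda\,\Pi(1,d\lambda)=K_0\,dG_\ell(1,\lambda-\lambdao)$, then $\lambda\,\Pi(n,d\lambda)=d\overline G_\ell(n,\lambda)$.
This is exactly your ``cross-check'', used as a derivation rather than a verification; $K_0$ enters only as a self-consistent reset flux.

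\textbf{Your route.} You evaluate the regenerative cycle formula through the embedded jump chain. You identify $\overline\Pi(n,\cdot)$ with the sub-probability law $\nu_n$ of the post-jump intensity at the $n$-th attack of a cycle, and the factor $1/\lambda$ with the mean of an $\Exp(\lambda)$ sojourn. This is presumably the ``alternative proof based on the embedded Markov chain'' announced in the introduction but absent from the appendix.

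\textbf{What each buys.} Your route gets the formula as an identity for the occupation measure of a cycle. Invariance is therefore inherited directly from the Harris-recurrence theorem. You never need to justify that an invariant, possibly infinite, measure satisfies $\int Q^0 f\,d\Pi=0$ on the chosen test functions. It also gives the formula for $K_0$ a clear meaning: one reset per cycle, with total probability $\P^{y^*}(T_A<\infty)=1$. In particular this shows the series defining $K_0$ converges. The paper's route, by contrast, uses the same machinery that handles $\alpha>0$, where each level becomes an ODE. Yours depends essentially on $\alpha=0$: $\Lambda$ is frozen between attacks, so sojourns are exponential with rate equal to the current state.

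\textbf{One bookkeeping point.} The paper defines $T_A=\inf\{t>0:\Lambda_t=\lambdao,\ \Lambda_{t-}\in(\lambdao,A)\}$. When $\alpha=0$, this does not count a reset triggered by the very first attack from $(0,\lambdao)$, since such a reset leaves $Y$ unchanged.
\begin{itemize}
\item Your Steps 2--3 (an $\Exp(\lambdao)$ sojourn at level $0$, exactly one reset per cycle) instead take $T_A$ to be the first reset time. This is the first passage of $\Lambda^0$ to $[A,\infty)$, as in the paper's own remark. That time is a regeneration time for the attack process, but not a stopping time of the natural filtration of $Y$ alone.
\item With the literal definition, the level-$0$ sojourn is $\Exp(\lambdao p)$ with $p=G_\ell(1,(A-\lambdao)\minus)$. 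Then $\Pi(0,\cdot)$ and every $\nu_n$, $n\ge1$ (hence every $\Pi(n,\cdot)$), acquire the factor $1/p$. The expected number of resets per cycle becomes $1/p=K_0$.
\end{itemize}
The conclusion is unaffected up to the scaling you already allow, but you should state which regeneration time you use.
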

      \begin{proof} 
  The detailed proof is provided in Appendix \ref{app:thm:stat.dist.alpha=0} .
  \end{proof}

  \section{An explicit control problem} \label{sec:exponential}
  
  In this section we are going to analyze some specific examples that allow for more explicit calculations. This enables to have some concrete ideas of the dynamics of the process and its stationary distribution, and in few cases to implement and solve, at least numerically, the associated control problem.

We work under the following:
\begin{assumption}\label{ass_ex}
    \begin{itemize}
    \item[i)] $\alpha>0$;
    \item[ii)] $\lambdao=\beta < A <\infty$;
    \item[iii)] The jumps' sizes $\ell(n,Z_n)\sim\Exp(\theta)$ for every $n \ge 1$.
    \end{itemize}
\end{assumption}  

  \begin{theorem}\label{thm:stat.dist.alpha>0.beta=lambda0.exp}
    \addcontentsline{toc}{subsection}{Theorem \thetheorem\ {case \texorpdfstring{$\beta=\lambda_o$}{Theorem β=λₒ}} and \texorpdfstring{$\alpha>0$}{α>0}  and exponential jumps}
    Under Assumption \ref{ass_ex}, the invariant measure $\Pi$ for $\Lambda$ is given by the following:
    \be \label{e:stdistycase.example}
    \Pi(d\lambda) = \frac{K_0}{\beta} \delta_{\beta}(d\lambda) + \pi(\lambda) d\lambda
    \ee
    where the density is given by 
    \be \label{e:stdistycase.example.pi}
    \pi(\lambda)
    = \frac{K_0}{\beta} \frac{\beta\theta}{\alpha(\lambda-\beta)} \int_\lambda^A \left(\frac{u-\beta}{\lambda-\beta}\right)^{-\frac{\beta}{\alpha}} e^{-(\frac{1}{\alpha}-\theta)(u-\lambda)} du \ ,
    \ee
    with $K_0/\beta = \frac{\alpha}{\alpha+ \beta \theta K_3}$ and 
  \be\label{eq:K3}
  K_3 =
  \begin{cases}
    -{\Kummer}^{(1,0,0)}\left(0,\frac{\beta }{\alpha },\frac{(\alpha  \theta -1) (A-\beta )}{\alpha }\right), & \alpha \not= 1/\theta \\
    \frac{\alpha}{\beta}  \left( A -\beta\right) & \alpha = 1/\theta
  \end{cases}
  \ee
  where ${\Kummer}^{(1,0,0)}$ is the Kummer’s confluent hypergeometric function.
  \end{theorem}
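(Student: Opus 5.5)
The approach is to work directly with the one-dimensional process $\Lambda$ rather than summing the formulas of Theorem \ref{thm:stat.dist.alpha>0.beta=lambda0} over $n$. Under Assumption \ref{ass_ex} the jump law $G_\ell(n,\cdot)$ is $\Exp(\theta)$ for every $n$, so in \eqref{e:QalphaY} the dependence on $n$ drops out. Hence $\Lambda$ is itself Markov, with generator
\[
Qf(\lambda)=-\lambda f(\lambda)+\alpha(\beta-\lambda)f'(\lambda)+\lambda\int_0^{A-\lambda}f(\lambda+u)\theta e^{-\theta u}du+\lambda f(\beta)e^{-\theta(A-\lambda)} .
\]
Its invariant measure is the $\Lambda$-marginal of the $\Pi$ given by the Harris-recurrence theorem of Section \ref{sec:stationary}. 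Since $\lambdao=\beta$, the flow started at $\beta$ stays at $\beta$ until the next jump, and for $\lambda>\beta$ the drift points down toward $\beta$. So I expect an atom at $\beta$ plus a density on $(\beta,A)$, which is the form of \eqref{e:stdistycase.example}.

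\textbf{Atom.} I will write $\Pi(d\lambda)=m\,\delta_\beta(d\lambda)+\pi(\lambda)d\lambda$. The mass at $\beta$ leaves at rate $\beta$ and is re-entered only through resets, so
\[
\beta m=K_0:=\int_{[\beta,A)}\lambda e^{-\theta(A-\lambda)}\Pi(d\lambda).
\]
This is the same $K_0$ as in \eqref{e:stdistycase2.K0}, now with exponential tails.

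\textbf{Density via level crossing.} For the density I will use a level-crossing (rate conservation) identity at each $\lambda\in(\beta,A)$, equivalently $\int Qf\,d\Pi=0$ for $f=\ind_{(\lambda,A)}$ suitably smoothed.
\begin{itemize}
\item Upward crossings are jumps from $y\le\lambda$ landing in $(\lambda,A)$. They occur at rate $\int_{[\beta,\lambda]}y\big(e^{-\theta(\lambda-y)}-e^{-\theta(A-y)}\big)\Pi(dy)$.
\item Downward crossings have two sources. One is the deterministic flow, at rate $\alpha(\lambda-\beta)\pi(\lambda)$. The other is resets from $y>\lambda$, at rate $\int_{(\lambda,A)}ye^{-\theta(A-y)}\Pi(dy)$.
\end{itemize}
Set $H(\lambda):=\int_{[\beta,\lambda]}ye^{\theta y}\Pi(dy)$. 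The balance then collapses to
\[
\alpha(\lambda-\beta)\pi(\lambda)=e^{-\theta\lambda}H(\lambda)-K_0,
\]
with $H'(\lambda)=\lambda e^{\theta\lambda}\pi(\lambda)$, $H(\beta)=K_0e^{\theta\beta}$ and $H(A)=K_0e^{\theta A}$. This gives a linear first-order ODE for $H$, or equivalently for $\psi:=e^{-\theta\lambda}H-K_0$:
\[
\psi'(\lambda)=\Big(\frac{\lambda}{\alpha(\lambda-\beta)}-\theta\Big)\psi(\lambda)-\theta K_0 .
\]
The point $\lambda=\beta$ is singular, so I will integrate from the regular endpoint $A$, where $\psi(A)=0$. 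The integrating factor is $(\lambda-\beta)^{-\beta/\alpha}e^{-(1/\alpha-\theta)\lambda}$, i.e.\ essentially $F$ from \eqref{e:F.lambda} times $e^{\theta\lambda}$. This yields
\[
\psi(\lambda)=\theta K_0\int_\lambda^A\Big(\frac{u-\beta}{\lambda-\beta}\Big)^{-\beta/\alpha}e^{-(1/\alpha-\theta)(u-\lambda)}du ,
\]
and dividing by $\alpha(\lambda-\beta)$ gives exactly \eqref{e:stdistycase.example.pi}. I will also check that this solution is consistent with the condition $\psi(\beta)=0$ and that $\pi$ is integrable near $\beta$. As a cross-check, the result should agree with $\sum_n\pi(n,\cdot)$ from Theorem \ref{thm:stat.dist.alpha>0.beta=lambda0}.

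\textbf{Normalisation and the main obstacle.} Finally I will normalise to a probability measure: $K_0/\beta+\int_\beta^A\pi=1$. The step I expect to be hardest is identifying the resulting double integral with the Kummer expression for $K_3$. The plan is as follows.
\begin{enumerate}
\item Apply Fubini to write $\int_\beta^A\pi$ as $\frac{K_0}{\beta}\frac{\beta\theta}{\alpha}\int_\beta^A\!\int_\beta^u\cdots$.
\item Substitute $u-\beta=s(A-\beta)$ and $\lambda-\beta=ts$, so that the inner integral becomes an incomplete-beta type integral in $t$.
\item Recognise the result as $\partial_a\,\Kummer(a,\beta/\alpha,z)\big|_{a=0}$ with $z=(\alpha\theta-1)(A-\beta)/\alpha$. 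This uses the integral representation of $\Kummer$ differentiated in $a$ at $a=0$, or equivalently the series $\partial_a\Kummer(0,b,z)=\sum_{k\ge1}\frac{(k-1)!}{(b)_k}\frac{z^k}{k!}$ matched term by term after expanding the exponential.
\end{enumerate}
The degenerate case $\alpha\theta=1$, where $z=0$, is computed directly and should give $K_3=\alpha(A-\beta)/\beta$. Solving $K_0/\beta\,(1+\beta\theta K_3/\alpha)=1$ then gives $K_0/\beta=\alpha/(\alpha+\beta\theta K_3)$.
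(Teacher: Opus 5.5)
Your derivation of the atom and of the density is correct, and it follows a different route from the paper's. The paper never works with $\Lambda$ alone. It starts from the $n$-resolved densities of Theorem~\ref{thm:stat.dist.alpha>0.beta=lambda0} and sums the recursion over $n$ to get an integral equation for $\pi$. It differentiates this into a second-order ODE for $h(y)=\widehat F(y)\pi(\beta+y)$ and reduces it to a first-order equation via $h=d\,g$. It then fixes the two constants by $\pi(A)=0$ and by the limit $\theta K_0=\lim_{\lambda\downarrow\beta}\big((\lambda-\alpha)\pi(\lambda)-\alpha(\lambda-\beta)\pi'(\lambda)\big)$.

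Your level-crossing identity is exactly $\int Qf\,d\Pi=0$ for $f=\ind_{(\lambda,A)}$, and its three terms check out. It is a first integral of the same stationary equation. So you land directly on a first-order ODE for $\psi=\alpha(\lambda-\beta)\pi$, with the single condition $\psi(A)=0$ built into the definition of $K_0$. Every homogeneous solution $(\lambda-\beta)^{\beta/\alpha}e^{(1/\alpha-\theta)\lambda}$ vanishes at $\beta$. Hence the condition at $\beta$, equivalently the atom balance $\beta m=K_0$, holds automatically rather than through a delicate limit.

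Your route is shorter and self-contained. The paper's route reuses the $n$-resolved machinery, which is what is needed when the jump law depends on $n$. To normalise you also need $\Pi$ to be finite. Here this is immediate: jumps occur at rate at least $\beta$ and each resets with probability at least $e^{-\theta(A-\beta)}$, so $\E[T_A]\le e^{\theta(A-\beta)}/\beta$.

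The step that will not go through as planned is the last one. After Fubini and the substitution (the inner one should read $\lambda-\beta=t(u-\beta)$), the normalisation is $1-K_0/\beta=K_0\frac{\theta}{\alpha}K_3$ with
\[
K_3=\int_0^1 t^{b-1}\,\frac{1-e^{-c(A-\beta)(1-t)}}{c\,(1-t)}\,dt,\qquad b=\frac{\beta}{\alpha},\quad c=\frac1\alpha-\theta .
\]
Set $z=(\alpha\theta-1)(A-\beta)/\alpha=-c(A-\beta)$. Expanding $(1-e^{-x})/x$ and using $\int_0^1t^{b-1}(1-t)^{k-1}dt=(k-1)!/(b)_k$ gives
\[
K_3=(A-\beta)\sum_{k\ge1}\frac{z^{k-1}}{k\,(b)_k}=\frac{\alpha}{\alpha\theta-1}\,{}_1F_1^{(1,0,0)}\Big(0,\frac{\beta}{\alpha},z\Big),\qquad \alpha\theta\neq1 ,
\]
not $-{}_1F_1^{(1,0,0)}(0,\beta/\alpha,z)$.

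The printed expression is what you get by dropping the factor $1/c$. This is precisely what happens in the paper between the last line of its normalisation computation and its definition of $K_3$. Two checks show the printed formula cannot be right:
\begin{itemize}
\item It tends to $0$ as $\alpha\theta\to1$, while the stated value at $\alpha\theta=1$ is $\alpha(A-\beta)/\beta$.
\item For $\alpha\theta>1$ it is negative, so $\alpha/(\alpha+\beta\theta K_3)$ cannot be a probability.
\end{itemize}
Your term-by-term matching will therefore produce the extra factor $\alpha/(\alpha\theta-1)$. You should state and prove the corrected expression above, which is continuous at $\alpha\theta=1$ with value $\alpha(A-\beta)/\beta$ there. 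The density formula is correct as stated, and $K_0/\beta=\alpha/(\alpha+\beta\theta K_3)$ is correct with this $K_3$.
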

  \begin{proof} 
  The detailed proof is provided in Appendix \ref{app:thm:stat.dist.alpha>0.beta=lambda0.exp}.
  \end{proof}

  \begin{figure}
    \includegraphics{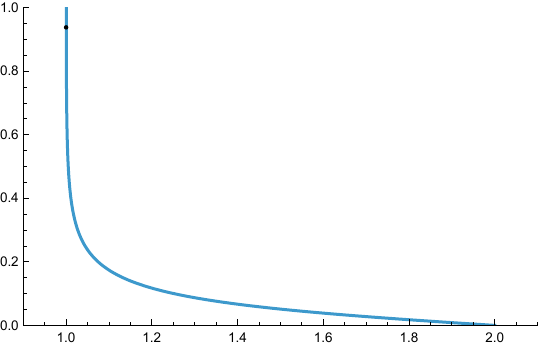}
    \caption{Plot of the density $\pi(\lambda)$ given in~\eqref{e:stdistycase.example.pi} with parameters $\beta=1$, $A=2$, $\theta=0.1$ and $\alpha=1.1$.\label{fig.explicit.pi} }
  \end{figure}
In the Figure~\ref{fig.explicit.pi} it is shown an explicit plot of the stationary distribution for $\Lambda$ for some explicit values of the parameters.

Let us now return to our control problem~\eqref{eq:problem}, and assume that the cost of the technology is $\eta(A)=c/A$ for some $c>0$.
    Thanks to Corollary~\ref{c:Climit}, 
    \begin{equation*}
     J(A) = \eta(A) + \mu_Z \lim_{t \rightarrow +\infty}  \frac{\int_0^t \Lambda_s ds}{t} .
    \end{equation*}
    Now, since $(N,\Lambda)$ is ergodic, it follows from, e.g., Th\'eor\`eme on p.30 in Az\'ema, Duflo and Revuz \cite{ADR} that
    \be
    \lim_{t \rightarrow +\infty} \frac{1}{t}\int_0^t \Lambda_s ds = \E_A [\Lambda_\infty] := \sum_{n=0}^\infty\int \lambda \Pi_A(n,d\lambda),
    \ee
    where $\Pi_A$ is the invariant distribution when the intensity is reset at $T_A$. Thus, our problem reads:
    \begin{equation}
      V = \inf_{A \in \R_+} \left( \frac{c}{A} + \mu_Z \E_A [\Lambda_\infty] \right) \ .
    \end{equation}

    \begin{proposition}Under Assumption~\ref{ass_ex}, with  $\alpha = 1/\theta$ and $\eta(A) = c/A$, the value function is equal to 
    \be\label{explicit.V}
    V = \frac{ \alpha \beta (2 \alpha^2 + A_*^2 +2 \alpha A_* - \beta^2)}{2 (\alpha + \beta)(\alpha + A_* - \beta)} + \frac{c}{A_*}
    \ee
    where
    \be\label{eq:optA.alpha=1/theta}
      A_* =\frac{\alpha ^2 \beta ^2+4 c^2 \left(\alpha ^2-\beta ^2\right)^2-4 \alpha  \beta  c \left(\alpha ^2-\beta ^2\right)-4 c \rho_1 \left(\alpha ^2-\beta ^2\right)+\rho_1^2-\alpha  \beta  \rho_1}{6 c (\alpha +\beta ) \rho_1}
    \ee
    where
    \begin{align*}
      \rho_1^3 =&-\alpha ^3 \beta ^3+8 c^3 \left(\alpha ^2-\beta ^2\right)^3+6 \alpha  \beta  c^2 (\alpha +\beta )^2 \left(7 \alpha ^2+4 \alpha  \beta -2 \beta ^2\right)\\
      &+6 \alpha ^2 \beta ^2 c (\alpha -\beta ) (\alpha +\beta )
      +6 \sqrt{3} \sqrt{\alpha ^3 \beta  c^2 (\alpha +\beta )^2 \rho_2}\\
      \rho_2 =& -\alpha ^3 \beta ^3+8 c^3 \left(\alpha ^2-\beta ^2\right)^3+3 \alpha  \beta  c^2 (\alpha +\beta )^2 (\alpha +2 \beta ) (5 \alpha -2 \beta )\\
      &+6 \alpha ^2 \beta ^2 c (\alpha -\beta ) (\alpha +\beta ).
    \end{align*}
    \end{proposition}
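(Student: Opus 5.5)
The plan is to make $J(A)=c/A+\mu_Z\,\E_A[\Lambda_\infty]$ explicit using Theorem~\ref{thm:stat.dist.alpha>0.beta=lambda0.exp}, and then minimise a one-variable rational function of $A$.

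\emph{Step 1: specialise the invariant measure.} With $\alpha=1/\theta$, the exponential factor $e^{-(\frac1\alpha-\theta)(u-\lambda)}$ in \eqref{e:stdistycase.example.pi} equals $1$. By \eqref{eq:K3}, $\beta\theta K_3=A-\beta$, so $K_0/\beta=\alpha/(\alpha+A-\beta)$. Put $x=\lambda-\beta$, $a=A-\beta$ and $\gamma=\beta/\alpha$. The density then becomes
\[
\pi(\lambda)=\frac{K_0}{\beta}\,\frac{\gamma}{\alpha}\,x^{\gamma-1}\int_x^a y^{-\gamma}\,dy ,
\]
which is elementary. No case split at $\gamma=1$ will be needed, because the order of integration will be exchanged before anything is evaluated.

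\emph{Step 2: compute the stationary mean.} Write $\E_A[\Lambda_\infty]=\beta\cdot\frac{K_0}{\beta}+\int_\beta^A\lambda\,\pi(\lambda)\,d\lambda$. By Fubini,
\[
\int_0^a(\beta+x)\,x^{\gamma-1}\int_x^a y^{-\gamma}\,dy\,dx=\int_0^a y^{-\gamma}\Big(\frac{\beta y^\gamma}{\gamma}+\frac{y^{\gamma+1}}{\gamma+1}\Big)dy=\frac{\beta a}{\gamma}+\frac{a^2}{2(\gamma+1)} .
\]
Hence
\[
\E_A[\Lambda_\infty]=\frac{\alpha}{\alpha+a}\Big(\beta+a+\frac{\beta a^2}{2\alpha(\alpha+\beta)}\Big).
\]
Over a common denominator this is a rational function of $A$ of the form shown in \eqref{explicit.V}: numerator quadratic in $A$, denominator $2(\alpha+\beta)(\alpha+A-\beta)$. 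As a consistency check, the same normalisation should give total mass $1$ for $\Pi$. The displayed $V$ carries no factor $\mu_Z$, so I would state that the units are normalised so that $\mu_Z=1$ (otherwise multiply the first term by $\mu_Z$). I would also double-check the numerator against the expansion above, since algebra slips are easy at this point.

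\emph{Step 3: optimise in $A$.} The objective $g(A)=c/A+R(A)$ tends to $+\infty$ as $A\downarrow\beta$ is not forced, so I restrict to $A>\beta$ (Assumption~\ref{ass_ex}) and observe that $g(A)\to\infty$ as $A\to\infty$. Setting $g'(A)=0$ and clearing denominators $A^2(\alpha+A-\beta)^2$ gives a polynomial equation that reduces to a cubic in $A$. Solving it by Cardano's formula gives the expression \eqref{eq:optA.alpha=1/theta}, with $\rho_1$ the real cube root and $\rho_2$ the discriminant-type quantity under the square root.

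\textbf{Main obstacle.} The hard part is justifying that this particular Cardano root is the relevant real root exceeding $\beta$ and that it is the global minimiser. I would try first to show that $g$ is convex on $(\beta,\infty)$: $c/A$ is convex, and $R(A)=\text{(linear)}+\kappa/(\alpha+A-\beta)$ with $\kappa>0$ is convex. The critical point is then unique, and the Cardano branch with real $\rho_1$ (using $\rho_2\ge0$) must coincide with it. Substituting $A_*$ back into $g$ yields $V$.
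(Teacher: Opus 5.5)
Your Steps 1--2 follow the paper's route, and your Fubini computation is correct. The display that follows it has a slip, though. The $\beta$-part contributes $\frac{K_0}{\beta}\cdot\frac{\gamma}{\alpha}\cdot\frac{\beta a}{\gamma}=\frac{K_0}{\beta}\cdot\frac{\beta a}{\alpha}$, not $\frac{K_0}{\beta}\,a$. Corrected,
\[
\E_A[\Lambda_\infty]=\beta+\frac{\beta a^2}{2(\alpha+\beta)(\alpha+a)}=\frac{\beta(2\alpha^2+A^2+2\alpha A-\beta^2)}{2(\alpha+\beta)(\alpha+A-\beta)},
\]
which is exactly the paper's expression.

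Also, \eqref{explicit.V} does carry $\mu_Z$: its leading factor $\alpha$ is $\mu_Z=1/\theta$. The paper implicitly takes $\ell(n,z)=z$, so that $Z_n\sim\Exp(\theta)$, and writes $J(A)=\alpha\,\E_A[\Lambda_\infty]+c/A$. Normalising $\mu_Z=1$ would not reproduce the displayed $V$ unless $\alpha=1$.

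The genuine gap is Step 3, and it cannot be closed as stated. With $a=A-\beta$ one has $\frac{d}{da}\frac{a^2}{\alpha+a}=1-\frac{\alpha^2}{(\alpha+a)^2}$. Hence $J'(A)=0$ reads
\[
\alpha\beta\,A^2\bigl[(A+\alpha-\beta)^2-\alpha^2\bigr]=2c(\alpha+\beta)(A+\alpha-\beta)^2,
\]
which is a genuine quartic in $A$, not a cubic.

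Read off the Cardano data in \eqref{eq:optA.alpha=1/theta}: the denominator is $3a_3\rho_1$ with $a_3=2c(\alpha+\beta)$, $\Delta_0=(2c(\alpha^2-\beta^2)-\alpha\beta)^2$, and the constant term is recovered from $\rho_1^3$. This shows that the displayed $A_*$ is a root of
\[
2c(\alpha+\beta)A(A+\alpha-\beta)^2+\alpha\beta\bigl[(A+\alpha-\beta)^2-\alpha^2\bigr]=0,
\]
that is, of $\frac{d}{dA}\bigl(\alpha\,\E_A[\Lambda_\infty]\bigr)=-cA$. That is the first-order condition for a cost with $\eta'(A)=cA$, not for $\eta(A)=c/A$.

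Concretely, take $\alpha=\beta=1$ and $c=2$. Then $J(A)=\frac{(A+1)^2+8}{4A}$, which is minimised at $A=\sqrt{1+4c}=3$ with $V=2$. By contrast, \eqref{eq:optA.alpha=1/theta} gives $\rho_1\approx 12$ and $A_*\approx 0.46<\beta$, which is the root of $8A^3+A^2-1=0$. So no check that ``this Cardano branch is the relevant root'' can succeed. The paper's proof has the same hole: it just asserts that $J$ is minimised at $A_*$.

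What your convexity argument does give is the following. Add $J'(\beta+)=-c/\beta^2<0$ (this replaces your garbled sentence about $A\downarrow\beta$) and $J'(A)\to\alpha\beta/(2(\alpha+\beta))>0$ as $A\to\infty$. Then the minimiser is the unique root in $(\beta,\infty)$ of the quartic above, and $V$ is \eqref{explicit.V} evaluated at that root. A closed form for that root would need Ferrari's formula, not the displayed expression.
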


    \begin{proof}
    Since $\Pi_A(d \lambda)= \frac{K_0}{\beta} \delta_{\beta}(d \lambda) + \pi(\lambda) d\lambda$, we first exploit  Eqs. \eqref{eq:K0} and \eqref{eq:K3} to find
    $$
    K_0 = \frac{\alpha \beta}{ \alpha  + \beta \theta K_3} = \frac{\alpha \beta}{ \alpha  + A - \beta }
    $$
    and then, recalling Eq. \eqref{eq:pi.explicit.final}, we obtain
    \begin{align*}
      \int_{\beta}^A \frac{\lambda}{\lambda  -\beta}\left(  \int_\lambda ^A \left(\frac{u-\beta}{\lambda -\beta}\right)^{-\frac{\beta}{\alpha}}  du \right) d\lambda
      &= \frac{\alpha (A-\beta)(A + 2\alpha +\beta)}{2(\alpha + \beta)}
    \end{align*}
    and so
    \be
    \begin{split}
      \E_A[\Lambda_\infty] &= K_0 + K_0 \frac{\theta }{\alpha}\int_{\beta}^A\frac{\lambda}{\lambda -\beta}  \left(\int_\lambda^A \left(\frac{u-\beta}{\lambda-\beta}\right)^{-\frac{\beta}{\alpha}}  du \right)d\lambda\\
      &= \frac{\alpha \beta}{ \alpha  + A - \beta }
      + \frac{\beta}{ \alpha  + A - \beta } \frac{(A-\beta)(A + 2\alpha +\beta)}{2(\alpha + \beta)}\\
      &= \frac{\beta (2 \alpha^2 + A^2 +2 \alpha A - \beta^2)}{2 (\alpha + \beta)(\alpha + A - \beta)}.
    \end{split}
    \ee
    It follows that the cost function is given by
    $$
    J(A) = \alpha \E_A [\Lambda_\infty] + \frac{c}{A} = \frac{ \alpha \beta (2 \alpha^2 + A^2 +2 \alpha A - \beta^2)}{2 (\alpha + \beta)(\alpha + A - \beta)} + \frac{c}{A}
    $$
    and it is minimized 
    at $A_*$ given in~\eqref{eq:optA.alpha=1/theta}.
    \end{proof}

    \begin{figure}
      \includegraphics{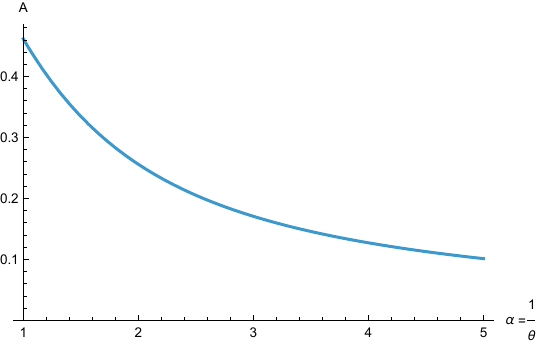}
      \caption{Optimal threshold $A_*$ as a function of $\alpha=1/\theta$,  given in~\eqref{eq:optA.alpha=1/theta} with parameters $\beta=1$, $\eta(A)=c/A$, $c=2$.\label{fig.explicit.opt.A}}
    \end{figure}
    In the Figure~\ref{fig.explicit.opt.A} it is shown an explicit plot of the optimal threshold $A_*$ as a function of $\alpha=1/\theta$. 
    
    \bibliographystyle{abbrvnat}
    \bibliography{main}
    \newpage

\begin{appendix}
\section{Proofs}
\subsection{Proof of Theorem \ref{thm:hilleyosida}\label{app:thm:hilleyosida}}
The result will follow from the Hille-Yosida theorem (see, e.g., Theorem 1.2.12 in Ethier and Kurtz \cite{ethier2009markov}). 

It is clear that the domain of $Q^\alpha$ is dense in $\widehat{C}({S})$.
To see that it is dissipative let $f$ be in its domain. Note that $f$ is the restriction of some function in $\widehat{C}(\bar{S})$. We will denote this function by $f$ as well. Since it vanishes at infinity, there exists a point $x^\star \in \bar{S}$ such that $\|f\|=f(x^\star)$. Without loss of generality we can suppose that $f(x^\star)\geq 0$.
If $x^\star$ is in the interior of $S$, $f_\lambda(x^\star)=0$, which in turn yields that $Q^\alpha f(x^\star)\leq 0$.
If $x^\star=(n^\star, A, z^\star)$, then it is clear that $f_\lambda(x^\star)\geq 0$.
Then, since~$\beta<A$, we deduce $Q^\alpha f(x^\star):=\lim_{x\to x^\star} Q^\alpha f(x) \leq 0$.
Similar argument proves that $Q^\alpha f(x^\star)\leq 0$ in the other boundary case as well.
Next, consider $r>0$ and note that
\[
\|rf-Q^\alpha f\|\geq rf(x^\star)-Q^\alpha f(x^\star)\geq r\|f\|,
\]
which demonstrates that $Q^\alpha$ is dissipative.

Now, it remains to show that the range of $r I - Q^\alpha$ is dense in $\widehat{C}(S)$ for some $r>0$, where $I$ is the identity operator. We claim that for any $g \in C_1(\bar{S})$ there exists a solution to the the equation
\be \label{e:fullrange}
rf -Q^\alpha f =g.
\ee
We shall provide the proof for $\alpha>0$. The case of $\alpha=0$ follows from easier arguments and is left to the reader.

To this end, for a given $f \in \widehat{C}(\bar{S})$, let $Tf(n,z)$ be the solution of the following ODE with a suitable boundary condition for fixed $n$ and $z$:
\be \label{e:defT}
(r+\lambda)u -\alpha(\beta-\lambda)u_\lambda
= g + \lambda \, Rf(n,\lambda,z),
\ee
where 
\[\begin{split}
Rf(n,\lambda,z)
:= &\int\ind_{\{\ell(n+1,y)+\lambda < A\}}f(n+1,\lambda +\ell(n+1,y), z+y) \,dG(y)\\
&+ \int\ind_{\{\ell(n+1,y)+\lambda \ge A\}}f(0,\lambdao, z+y) \,dG(y) \ .
\end{split}
\]
By finiteness and continuity at $\lambda=\beta$, we have $Tf\in \widehat{C}(\bar{S})$ 
given by 
\[
Tf(n,\lambda,z) = 
e^{-F(\lambda)} \int_\beta^\lambda e^{F(y)}\frac{g(y)+y Rf(n,y,z)}{\alpha (y-\beta)}dy ,
\]
with $Tf(n,\beta,z):=0$ and 
\[
F(\lambda) := \frac{r+\beta}{\alpha}\log |\lambda-\beta|+\frac{\lambda}{\alpha}.
\]

We claim that this operator is a contraction on $\widehat{C}(\bar{S})$ for a large enough $r$.  Indeed, for $\{f_1,f_2\}\subset  \widehat{C}(\bar{S})$, and $\lambda \in [\lambdao,A]$
\[\begin{split}
|Tf_1(n,\lambda,z)- Tf_2(n,\lambda,z)|
&\leq 
\int_\beta^\lambda \frac{y e^{F(y)-F(\lambda)}}{\alpha (y-\beta)} \|R f_1- R f_2\| dy\\
&\leq 
\left(\int_\beta^\lambda \frac{y e^{F(y)-F(\lambda)}}{\alpha (y-\beta)} dy \right)  \|f_1-f_2\|.
\end{split}
\]

Note that, since 
for $0<y<|\lambda-\beta|$ and $r>1$, we have 
\[
\frac{1 }{y}e^{F(y+\beta)-F(\lambda)}
=  \frac{1}{|\lambda-\beta|} \left(\frac{y}{|\lambda-\beta|}\right)^{\frac{r-1+\beta}{\alpha}} e^{\frac{y}{\alpha} + \frac{\beta-\lambda}{\alpha}}
\leq \frac{e^{ \frac{2|\lambda-\beta|}{\alpha}}}{|\lambda-\beta|}  \left(\frac{y}{|\lambda-\beta|}\right)^{\frac{r-1+\beta}{\alpha}}
\]
we have for $\lambda\in[\lambdao,A]$,
\[\begin{split}
\int_\beta^\lambda \frac{y e^{F(y)-F(\lambda)} }{\alpha (y-\beta)}dy
&=\int_0^{|\lambda-\beta|} \frac{(y+\beta) e^{F(y+\beta)-F(\lambda)} }{\alpha y}dy\\
&\leq \frac{\lambda\vee\beta}{\alpha} e^{ \frac{2|\lambda-\beta|}{\alpha}} 
\int_0^{|\lambda-\beta|}  \left(\frac{y}{|\lambda-\beta|}\right)^{\frac{r-1+\beta}{\alpha}} d\frac{y}{|\lambda-\beta|} 
=\frac{\lambda\vee\beta}{r+\beta} e^{\frac{2|\lambda-\beta|}{\alpha}  \, 
},
\end{split}\]
where $a \vee b = \max\{a,b\}.$
Therefore, for large enough $r$,
\[
\|Tf_1-Tf_2\|\leq \frac{1}{2}\|f_1-f_2\|.
\]
That is, $T$ is a contraction. Hence, it admits a fixed point, which demonstrates that \eqref{e:fullrange} has a solution. This completes the proof.

\subsection{Proof of Theorem \ref{thm:stat.dist.alpha>0.beta>lambda0} \label{app:thm:stat.dist.alpha>0.beta>lambda0}}
  For a test function $f$, by definition $\Pi$ must satisfy
  \[
    \sum_{n=0}^\infty \int_{[\lambdao,A)}Q^\alpha f(n,\lambda) \Pi(n,d\lambda)=0.
  \]
  Thus, using Equation \eqref{e:QalphaY}, we find
  \be\label{e:Qeqforsd}
  \begin{split}
    \sum_{n=0}^\infty \int_{[\lambdao,A)} &\lambda f(n,\lambda) \Pi(n,d\lambda) \\
    =&f(0,\lambdao)K_0 + \sum_{n=0}^\infty \int_{[\lambdao,A)}\alpha (\beta-\lambda) f_\lambda(n,\lambda)\Pi(n,d\lambda)\\
    &+\sum_{n=1}^\infty \int_{[\lambdao,A)}\int\ind_{\{u+\lambda<A\}}\lambda f(n,u+\lambda)\,dG_{\ell}(n, u)\Pi(n-1,d\lambda),
  \end{split}
  \ee
  where $K_0=\sum_{n=1}^\infty\int_{[\lambdao,A)}\lambda \bar{G}_{\ell}(n, (A-\lambda) \minus)\Pi(n-1,d\lambda)$.


  We start by choosing a test function $f$ such that $f(n,\lambda)=0$, for $n>0$, and assuming
  $$
  \Pi(0,d\lambda)=\Pi(0,\{\lambdao\}) \delta_{\lambdao}(d\lambda) + \pi(0,\lambda)d\lambda.
  $$
  Moreover, $\pi(0,\lambda)$ is of finite variation and therefore we write its decomposition as
  $$
  d\pi(0,\lambda)=\pi'(0,\lambda)d\lambda + \widehat \pi'(0,d\lambda)
  $$
  where the (signed) measure $\widehat \pi'(0,d\lambda)$ is singular with respect to the Lebesgue measure and with support in $(\lambda_0,A)$.

  Under these assumptions and applying integration by parts' formula,  \eqref{e:Qeqforsd} reduces to
  \be\label{e:Qeqforsd.n=0}
  \begin{split}
    0 =
    & f(0,\lambdao)\left(
      \lambdao\Pi(0,\{\lambdao\}) - K_0 + \alpha (\beta-\lambdao) \pi(0,\lambdao)
    \right)\\
    &-f_\lambda(0,\lambdao) \alpha (\beta-\lambdao) \Pi(0,\{\lambdao\})
    - f(0,A) \alpha (\beta-A) \pi(0,A) \\
    &+ \int_{(\lambdao,A)} f(0,\lambda) \alpha (\beta-\lambda)  \widehat \pi'(0,d\lambda)\\
    &+\left(\int_\lambdao^\beta +\int_\beta^A \right) f(0,\lambda) \left((\lambda - \alpha) \pi(0,\lambda) + \alpha (\beta-\lambda)  \pi'(0,\lambda) \right) d\lambda .
  \end{split}
  \ee
  Since $f(0,\lambda)$ is arbitrarily chosen in a set of functions that separates measures, we get that the support of the measure $\widehat\pi'(0,d\lambda)$ is $\{\beta\}$, $\Pi(0,\{\lambdao\}) = 0$
  and $\pi(0, \lambda)$ solves the following ODE in $[\lambdao,A]\setminus\{\beta\}$,
  \be
  (\lambda - \alpha) \pi(0,\lambda) + \alpha (\beta-\lambda)  \pi'(0,\lambda) = 0
  \ee
  with boundary conditions
  \be\label{e:pi0ode}
  \begin{split}
    \pi(0,\lambdao) &= K_0/(\alpha (\beta-\lambdao)) \\
    \pi(0,A) &= 0
  \end{split}
  \ee

  It follows that, for $\lambda\in[\lambdao,\beta)$,
  \be \label{e:pi0case1}
  \pi(0,\lambda)=\frac{K_0}{\alpha  (\beta -\lambdao)} \left(\frac{\beta -\lambda }{\beta -\lambdao}\right)^{\frac{\beta -\alpha }{\alpha }} e^{\frac{\lambda - \lambdao}{\alpha}} ,
  \ee
  and $\pi(0,\lambda)=0$ outside that interval. In particular, $\widehat\pi'(0,\{\beta\})=-\pi(0,\beta-)$.
  
  We again look at \eqref{e:Qeqforsd}, now for $n>0$, assuming that $f(m,\lambda)=0$ for $m\not=n$ and the knowledge of $\Pi(n-1,\lambda)$. Since $\Pi(0,\lambda)$ does not charge any mass to any point, we do not expect any point mass of $\Pi(n,\lambda)$, so we assume
  $$
  d\Pi(n,\lambda) = \pi(n,\lambda)d\lambda, \quad d\pi(n,\lambda)=\pi'(n,\lambda)d\lambda + \widehat \pi'(n,d\lambda)
  $$
  where the (signed) measure $\widehat \pi'(n,d\lambda)$ is singular with respect to the Lebesgue measure and with support inside $(\lambda_0, A)$.

  Under these assumptions and applying integration by parts' formula,  \eqref{e:Qeqforsd} reduces to
  \be\label{e:Qeqforsd.n}
  \begin{split}
    0 =
    & f(n,\lambdao)\left(\alpha (\beta-\lambdao) \pi(n,\lambdao)
    \right)
    - f(n,A) \alpha (\beta-A)  \pi(n,A) \\
    &+ \int_{(\lambda^o,A)} f(n,\lambda) \alpha (\beta-\lambda)  \widehat \pi'(n,d\lambda)\\
    &+\left(\int_\lambdao^\beta +\int_\beta^A \right)  f(n,\lambda) \left((\lambda - \alpha) \pi(n,\lambda) + \alpha (\beta-\lambda)  \pi'(n,\lambda) \right) d\lambda \\
    &-\int_\lambdao^A f(n,\lambda)  p(n-1,\lambda) d\lambda
  \end{split}
  \ee
  where
  \be \label{e:monster_p}
  p(n-1,\lambda) = \int\ind_{\{u+\lambdao<\lambda\}} (\lambda-u) \pi(n-1,\lambda-u)\,dG_{\ell}(n, u).
  \ee

  By analyzing the expression \eqref{e:Qeqforsd.n}, we infer that $\pi(n,\lambdao)=\pi(n,A)=0$ and that the support of the measure $\widehat\pi'(n,d\lambda)$ is $\{\beta\}$, again because $f(0,\lambda)$ is arbitrarily chosen in a set of functions that separates measures.

  Moreover,the density function $\pi(n,\lambda)$, satisfies the following ODE, in the regions $[\lambdao, \beta)$ and $[\beta, A]$
  \be\label{diff.eq.pi.n.1}
  \begin{split}
    (\lambda - \alpha) \pi(n,\lambda) + \alpha (\beta-\lambda)  \pi'(n,\lambda) &= p(n-1,\lambda)   \\
    \pi(n,\lambdao)=\pi(n,A)&=0.
  \end{split}
  \ee
  As before, we deduce $\widehat\pi'(n,\{\beta\})=\pi(n,\beta)-\pi(n,\beta \minus)$.

  To solve the ODE with the given boundary behavior, use  the integrating factor $F$ defined by \eqref{e:F.lambda}, which 
  has the property that, for $\lambda\not=\beta$, $F'(\lambda)/F(\lambda)=(\alpha-\lambda)/(\alpha(\lambda-\beta))$.
  Then, computing the derivative of $F(\lambda)\pi(n,\lambda)$ and using \eqref{diff.eq.pi.n.1}, we get
  \[
    \partial_\lambda (F(\lambda)\pi(n,\lambda)) = \frac{F(\lambda)p(n-1,\lambda)}{\alpha(\beta-\lambda)}.
  \]
  By integrating on~$[\lambdao,\lambda)$ and using the boundary condition at $\lambdao$, we get on~$(\lambdao,\beta)$
  \be \label{e:pincase1a}
  \pi(n,\lambda)=\frac{1}{F(\lambda)}\int_{\lambdao}^{\lambda} \frac{F(y)p(n-1,y)}{\alpha(\beta-y)}dy,
  \ee
  that is null on $[\lambdao,(\lambdao+\Ell_n) \wedge \beta]$.
  Using the boundary condition at $A$, we also get on $[\beta,A)$
  \be \label{e:pincase1b}
  \pi(n,\lambda) = \frac{1}{F(\lambda)}\int_{\lambda}^{A} \frac{F(y)p(n-1,y)}{\alpha(y-\beta)}dy.
  \ee
  We shall next show that $\pi(n,\cdot)$ is integrable on $(\lambda^o, A)$ that allows to construct the stationary distribution after re-normalizing by the constant $K_0$, (note that the normalizing constant always exists  because $\pi(0,\cdot)$ does not vanish everywhere, so the sum $\sum_{n=0}^\infty \pi(n,\cdot)$ is not identically null). Since it is continuous except possibly at $\beta$, the integrability will follow from $\lim_{\lambda \to \beta}(\beta-\lambda)\pi(n,\lambda)=0$.  To this end, consider
  \[
    \lim_{\lambda\uparrow \beta}\frac{\beta-\lambda}{F(\lambda)}=\exp(\beta/\alpha)\lim_{\lambda\uparrow \beta}(\beta-\lambda)^{\frac{\beta}{\alpha}}=0.
  \]
  The above  implies that $\lim_{\lambda\to \beta} (\beta-\lambda) \pi(n,\lambda)=0$ provided $\int_{\lambdao}^{\lambda} \frac{F(y)p(n-1,y)}{\alpha(\beta-y)}dy<\infty$. If it is not finite, an application of de l'H\^opital's rule yields that
  \begin{align*}
    \lim_{\lambda\uparrow  \beta} (\beta-\lambda) \pi(n,\lambda)&=\lim_{\lambda\uparrow  \beta}\Big(\frac{d}{d\lambda}\log (F(\lambda)/(\beta-\lambda)\Big)^{-1}\frac{p(n-1,\lambda)}{\alpha}  \\
      &=\frac{1}{\beta}\lim_{\lambda\uparrow  \beta}(\beta-\lambda)p(n-1,\lambda).
    \end{align*}
    On the other hand, it follows by an induction argument that $\lim_{\lambda\uparrow  \beta}(\beta-\lambda)p(n-1,\lambda)=0$ since $\lim_{\lambda \uparrow  \beta}(\beta-\lambda)p(0,\lambda)=0$ using the explicit form of $\pi(0,\cdot)$. Thus, $\lim_{\lambda \uparrow \beta} (\beta-\lambda) \pi(n,\lambda)=0$ for all $n\geq 0$.
    
\subsection{Proof of Theorem \ref{thm:stat.dist.alpha>0.beta=lambda0} \label{app:thm:stat.dist.alpha>0.beta=lambda0}}
    As for the proof of Theorem~\ref {thm:stat.dist.alpha>0.beta>lambda0}, we look at the equality in~\eqref{e:Qeqforsd} by choosing a test function $f$ such that $f(n,\lambda)=0$, for $n>0$, and assuming $\Pi(0,d\lambda)=\Pi(0,\{\lambdao\}) \delta_{\lambdao}(\lambda) + \pi(0,\lambda)d\lambda$. We assume $\pi(0,\lambda)$ is of finite variation and therefore we write its decomposition as $d\pi(0,\lambda)=\pi'(0,\lambda)d\lambda + \widehat \pi'(0,d\lambda)$ where the (signed) measure $\widehat \pi'(0,d\lambda)$ is singular with respect to the Lebesgue measure.

    Under these assumptions \eqref{e:Qeqforsd} reduces to~\eqref{e:Qeqforsd.n=0}, that, with $\beta=\lambdao$, simplifies to
    \be\label{e:Qeqforsd.n=0.beta=lamba0}
    \begin{split}
      0 =
      & f(0,\lambdao)\left(\lambdao \Pi(0,\{\lambdao\}) - K_0 \right) + f(0,A) \alpha (A-\lambdao) \pi(0,A) \\
      &- \int_{(\lambdao,A)} f(0,\lambda) \alpha (\lambda-\lambdao)  \widehat \pi'(0,d\lambda)\\
      &+\int_\lambdao^A f(0,\lambda) \left((\lambda - \alpha) \pi(0,\lambda) - \alpha (\lambda-\lambdao)  \pi'(0,\lambda) \right) d\lambda .
    \end{split}
    \ee
    By the arbitrariness of $f(0,\cdot)$, we get $\Pi(0,\{\lambdao\}) = K_0/\lambdao$, while $\pi(0,\lambda)$ is null on the whole interval $(\lambdao,A)$.


    Now we assume that $f(n,\lambda)=0$ for $n\not=1$. We assume that $d\Pi(1,\lambda) = \pi(1,\lambda)d\lambda$ and $d\pi(1,\lambda)=\pi'(1,\lambda)d\lambda + \widehat \pi'(1,d\lambda)$ where the (signed) measure $\widehat \pi'(1,d\lambda)$ is singular with respect to the Lebesgue measure, and  $\widehat \pi'(1,\{\lambdao\})=0$ by convention.
    We also assume
    \begin{equation}\label{G.decomp}
    dG_{\ell}(1, \lambda-\lambdao) = G'_{\ell}(1, \lambda-\lambdao) d\lambda + \widehat G'_{\ell}(1, d(\lambda-\lambdao))
    \end{equation}
    with $\widehat G'_{\ell}$ singular with respect to the Lebesgue measure.

    So, applying once more integration by parts' formula as done to obtain Equation \eqref{e:Qeqforsd.n}, and using that $\Pi(0,\cdot)$ is non-null only at $\{\lambdao \}$, with $\Pi(0,\{\lambdao\}) = K_0/\lambdao$, Equation \eqref{e:Qeqforsd} reduces to:
    \be\label{e:Qeqforsd.n=1}
    \begin{split}
      0=& \int_{[\lambdao, \lambdao + \ell_1)} f(1,\lambda) \left((\lambda-\alpha) \pi(1,\lambda) - \alpha (\lambda-\lambdao) \pi'(1,\lambda)\right)d\lambda \\
      &-\int_ {(\lambdao, \lambdao + \ell_1)} f(1,\lambda) \alpha (\lambda-\lambdao) \widehat \pi'(1,d\lambda)\\
      &-\int_{[\lambdao + \ell_1,A)} f(1,\lambda) \left(-(\lambda-\alpha) \pi(1,\lambda) + \alpha (\lambda-\lambdao) \pi'(1,\lambda) + K_0 G'_{\ell}(1, \lambda-\lambdao)\right) d\lambda \\
      &-\int_{[\lambdao + \ell_1, A)} f(1,\lambda) \left(\alpha (\lambda-\lambdao) \widehat \pi'(1,d\lambda) +K_0 \widehat G'_{\ell}(1, d(\lambda-\lambdao))\right) \\
      &-\alpha (\lambdao-A) f(1,A) \pi(1,A) + f(1,\lambdao)\alpha\lim_{\lambda \to \lambdao}(\lambda-\lambdao)\pi(1,\lambda).
    \end{split}
    \ee

    It follows that $\pi(1,A)=0$, $\lim_{\lambda \to \lambdao}(\lambda-\lambdao)\pi(1,\lambda)=0$ and $\widehat \pi'(1,d\lambda)$ has support in $[\lambdao + \ell_1, A)$ where
    $$
    \widehat \pi'(1,d\lambda)= -\frac{K_0}{\alpha(\lambda-\lambdao)} \widehat G'_{\ell}(1, d(\lambda-\lambdao)).
    $$
    We now focus separately on the two sub-intervals:
    \begin{itemize}
      \item On $[\lambdao + \ell_1,A)$:
        \be \label{e:odepi1start}
        (\lambda-\alpha) \pi(1,\lambda) - \alpha (\lambda - \lambdao) \pi'(1,\lambda) = K_0 G'_{\ell}(1, \lambda-\lambdao)
        \ee
        and, since $\pi(1,\lambda)=  -\int_\lambda^A \pi'(1,u) du - \int_{(\lambda,A)} \widehat\pi'(1,du)$,
        we have
        \be
        \begin{split}
          &  -(\lambda-\alpha)\int_\lambda^A \pi'(1,u) du - \alpha (\lambda - \lambdao) \pi'(1,\lambda)
          = p(0,\lambda)\label{e:odepi1}
        \end{split}
        \ee
        where
        \begin{equation*}
        \begin{split}
          p(0,\lambda)
          =K_0 \Big(G'_{\ell}(1, \lambda-\lambdao) & - \int_{(\lambda,A)}\frac{(\lambda- \alpha)}{\alpha (u-\lambdao)} \widehat G'_{\ell}(1, d(u-\lambdao))\Big)\\
          \end{split}
        \end{equation*}
        Define  $\pi_0(1,\lambda):=-\int_\lambda^A \pi'(1,u) du$, so that \eqref{e:odepi1} reads
        \be
        (\lambda-\alpha)\pi_0(1,\lambda)- \alpha (\lambda - \lambdao) \pi'_0(1,\lambda) = p(0,\lambda).
        \ee
        Using the boundary condition at $A$ we find for $\lambda \in [\lambdao + \ell_1,A)$
        \[
          \pi_0(1,\lambda)=\frac{1}{F(\lambda)}\int_{\lambda}^{A} \frac{F(y)p(0,y)}{\alpha(y-\lambdao)}dy,
        \]
        where $F$ is defined in \eqref{e:F.lambda}. Consequently, for $\lambda \in [\lambdao + \ell_1,A)$,
        \begin{equation*}
        \begin{split}
          \pi(1,\lambda) 
          &= \frac{1}{F(\lambda)}\int_{\lambda}^{A} \frac{F(y)p(0,y)}{\alpha(y-\lambdao)}dy + \int_{(\lambda,A)} \frac{K_0}{\alpha(u-\lambdao)} \widehat G'_{\ell}(1, d(u-\lambdao)) \\
          &= \frac{K_0}{F(\lambda)}\int_{\lambda}^{A} \frac{F(y) }{\alpha(y-\lambdao)} G'_{\ell}(1, y-\lambdao) dy \\
           &-\frac{K_0}{F(\lambda)} \int_{(\lambda,A)} \frac{1}{\alpha (u-\lambdao)} \left(\int_{\lambda}^{u} F(y) \frac{(y- \alpha)}{\alpha(y-\lambdao)}  dy \right)\widehat G'_{\ell}(1, d(u-\lambdao))  \\
          &+K_0\int_{(\lambda,A)} \frac{1}{\alpha(u-\lambdao)} \widehat G'_{\ell}(1, d(u-\lambdao))
         \end{split}
        \end{equation*}
        and now recalling that being $\lambda\not=\lambdao$, $F'(y)/F(y)=(\alpha-y)/(\alpha(y-\lambdao))$, we find 
        \begin{equation*}
        \begin{split}
          \pi(1,\lambda) 
          &= \frac{K_0}{F(\lambda)}\int_{\lambda}^{A} \frac{F(u)}{\alpha(u-\lambdao)} G'_{\ell}(1, u-\lambdao) du \\       
          &+ \frac{K_0}{F(\lambda)} \int_{(\lambda,A)} \Big(F(u)-F(\lambda)\Big)\frac{1}{\alpha (u-\lambdao)} \widehat G'_{\ell}(1, d(u-\lambdao))  \\
        &+K_0\int_{(\lambda,A)} \frac{1}{\alpha(u-\lambdao)} \widehat G'_{\ell}(1, d(u-\lambdao))\\
          &=  \frac{K_0}{F(\lambda)}\int_{\lambda}^{A} \frac{F(u)}{\alpha(u-\lambdao)} \Big(G'_{\ell}(1, u-\lambdao) du + \widehat G'_{\ell}(1, d(u-\lambdao)) \Big)\\
          &
        =\frac{K_0}{\alpha}\int_{\lambda-\lambdao}^{A-\lambdao} \frac{F(\lambdao + y)}{yF(\lambda)}\,dG_{\ell}(1, y).\\
          \end{split}
        \end{equation*}

      \item On $(\lambdao,\ell_1+\lambdao)$: we have that
        \be
        (\lambda-\alpha) \pi(1,\lambda) - \alpha (\lambda-\lambdao) \pi'(1,\lambda) = 0
        \ee
        with the  boundary condition $\pi(1,\ell_1+\lambdao\minus) = \pi(1,\ell_1+\lambdao)-\frac{K_0}{\alpha \ell_1}  \Delta G_{\ell}(1, \ell_1)=\pi(1,\ell_1+\lambdao)-\frac{K_0}{\alpha \ell_1}  G_{\ell}(1, \ell_1)$ since the support of $G_\ell(1,\cdot)$ is in $[\ell_1, \infty)$ and $  G_{\ell}(1, \ell_1\minus)=0$.

        It follows that, for $\lambda\in(\lambdao,\ell_1+\lambdao)$,
        \be
        \pi(1,\lambda)=\pi(1,\ell_1+\lambdao)\left(\frac{\lambda-\lambdao}{\ell_1}\right)^{\frac{\lambdao-\alpha}{\alpha}} e^{-\frac{\lambdao+\ell_1-\lambda}{\alpha}} .
        \ee
        Note that $\lim_{\lambda \to \lambdao}(\lambda-\lambdao)\pi(1,\lambda)=0$ as required before in this proof.
    \end{itemize}


    Next, to handle the case  for $n>1$, we assume that $f(m,\lambda)=0$ for $m\not=n$ and thereby deduce that  $\Pi(n-1,d\lambda)$ is absolutely continuous  with respect to the Lebesgue measure.
    Therefore, we assume $d\Pi(n,\lambda) = \pi(n,\lambda)d\lambda$ and consider $d\pi(n,\lambda)=\pi'(n,\lambda)d\lambda + \widehat \pi'(n,d\lambda)$ where the (signed) measure $\widehat \pi'(n,d\lambda)$ is singular with respect to the Lebesgue measure and  $\widehat \pi'(n,\{\lambdao\})=0$ by convention as before.

    Applying the integration by parts formula, Equation \eqref{e:Qeqforsd} now reduces to
      \be\label{e:Qeqforsd.n.beta=lambdao}
      \begin{split}
        0 =
        &\int_{(\ell_n,A)} f(n,\lambda) \alpha (\lambdao-\lambda)  \widehat \pi'(n,d\lambda)\\
        &+\int_\lambdao^A f(n,\lambda) \left((\lambda - \alpha) \pi(n,\lambda) - \alpha (\lambda-\lambdao)  \pi'(n,\lambda) - p(n-1,\lambda)\right) d\lambda \\
        &-\alpha (\lambdao-A) f(n,A) \pi(n,A\minus) + f(n,\lambdao)\alpha\lim_{\lambda \to \lambdao}(\lambda-\lambdao)\pi(n,\lambda).
      \end{split}
      \ee
    where
    \be\label{def:p(n-1,lambda).beta=lambdao}
    p(n-1,\lambda) := \int \ind_{\{u+\lambdao<\lambda\}} (\lambda-u) \pi(n-1,\lambda-u)\,dG_{\ell}(n, u).
    \ee

    By analyzing the expression \eqref {e:Qeqforsd.n.beta=lambdao}, we infer that $\pi(n,A\minus)=\lim_{\lambda \to \lambdao}(\lambda-\lambdao)\pi(n,\lambda)=0$ and  the measure $\widehat\pi'(n,d\lambda)$ is null.

    The density function $\pi(n,\lambda)$, satisfies the following ODE on $(\lambdao, A)$
      \be\label{diff.eq.pi.n.2}
      (\lambda - \alpha) \pi(n,\lambda) - \alpha (\lambda-\lambdao)  \pi'(n,\lambda) = p(n-1,\lambda)
      \ee
    with boundary condition $\pi(n,A)=0$.
    Reasoning as before, on $(\lambdao,A)$, we have
    \[
      \pi(n,\lambda)=\frac{1}{F(\lambda)}\int_{\lambda}^{A} \frac{F(y)p(n-1,y)}{\alpha(y-\lambdao)}dy.
    \]
\subsection{Proof of Theorem \ref{thm:stat.dist.alpha=0}\label{app:thm:stat.dist.alpha=0}}
    First of all, when $\alpha=0$ the generator of $Y$ reads:
    \be
    \begin{split}
      Q^0 f(n, \lambda)&= -\lambda f(n, \lambda) 
      +\lambda \int\ind_{\{u+\lambda<A\}}f(n+1,u+\lambda)\,dG_{\ell}(n+1, u)\\
      &+\lambda f(0,\lambdao) \bar{G}_{\ell}(n+1, (A-\lambda) \minus),
    \end{split}
    \ee
    so that, using Equation \eqref{e:QalphaY}, we find that the stationary distribution satisfies
    \be\label{eq:Stat_d_alpha0}
    \begin{split}
      \sum_{n=0}^\infty \int_{[\lambdao,A)} &\lambda f(n,\lambda) \Pi(n,d\lambda) \\
      =&f(0,\lambdao)K_0 +\sum_{n=1}^\infty \int_{[\lambdao,A)}\int_{[\ell_n, A-\lambda)}\lambda f(n,u+\lambda)\,dG_{\ell}(n, u)\Pi(n-1,d\lambda),
    \end{split}
    \ee
    where $K_0=\sum_{n=1}^\infty\int_{[\lambdao,A)}\lambda \bar{G}_{\ell}(n, (A-\lambda) \minus)\Pi(n-1,d\lambda)$.

    Choosing a test function $f$ such that $f(n,\lambda)=0$, for $n>0$, we have
    \be\label{eq:Stat_d_alpha0.0}
    \begin{split}
      \int_{[\lambdao,A)} &\lambda f(0,\lambda) \Pi(0,d\lambda)
      = f(0,\lambdao)K_0
    \end{split}
    \ee
    and, by arbitrariness of $f(0,\cdot)$, we get
    $
    \Pi(0, d \lambda)=\frac{K_0}{\lambdao} \delta_{\lambdao}(d\lambda).
    $

    Now we assume that $f(n,\lambda)=0$ for $n\not=1$ and we look for $\Pi(1,d\lambda) = \pi(1,\lambda)d\lambda$. So, Eq. \eqref{eq:Stat_d_alpha0} becomes:
    \be
    \begin{split}
      \int_{[\lambdao,A)} \lambda f(1,\lambda) \Pi(1,d\lambda)
      = & \int_{[\lambdao,A)}\int\ind_{\{u+\lambda<A\}}\lambda f(1,u+\lambda)\,dG_{\ell}(1, u)\Pi(0,d\lambda)\\
      =&  K_0  \int\ind_{\{u+\lambdao<A\}}f(1,u+\lambdao)\,dG_{\ell}(1, u)\\
      =&  K_0  \int\ind_{\{\lambda<A\}}f(1,\lambda)\,dG_{\ell}(1, \lambda -\lambdao).
    \end{split}
    \ee
    The support of $\Pi(1,\cdot)$ is $[\lambdao +\ell_1, A)$ and it holds
    $$
    \int_{[\lambdao + \ell_1, A)}  f(1,\lambda) \left[  \lambda \Pi(1, d \lambda) - K_0\,dG_{\ell}(1, \lambda-\lambdao) \right] =0
    $$
    so, by the arbitrariness of $f(1,\cdot)$
    $$
    \Pi(1, d \lambda) =\frac{K_0}{\lambda}\,dG_{\ell}(1, \lambda-\lambdao).
    $$

    To handle the case $n>1$, we assume that $f(m,\lambda)=0$, for $m\not=n$. So, Eq. \eqref{eq:Stat_d_alpha0} reads:
    \begin{equation*}
      \begin{split}
        \int_{[\lambdao,A)} \lambda f(n,\lambda) \Pi(n,d\lambda)
        = & \int_{[\lambdao,A)}\int\ind_{\{u+\lambda<A\}}\lambda f(n,u+\lambda)\,dG_{\ell}(n, u)\Pi(n-1,d\lambda)\\
        = & \int_{[\lambdao,A)}\int\ind_{\{u+\lambda<A\}}f(n,u+\lambda)\,dG_{\ell}(n, u) \bar\Pi(n-1,d\lambda) \\
        = & \int_{[\lambdao,A)} f(n,\lambda) d\bar G_{\ell}(n, \lambda)
      \end{split}
    \end{equation*}
    where $\overline \Pi(n,d\lambda) := \lambda \Pi(n,d\lambda)$
    and $\overline G_{\ell}(n,\cdot) :=G_{\ell}(n,\cdot) \star \overline \Pi(n-1,\cdot)$.
    The support of $\Pi(n,\cdot)$ is $[\lambdao +\ell_n, A)$,
    and by arbitrariness of $f(n,\cdot)$ we find
    $$
    \Pi(n, d \lambda) =\frac{1}{\lambda}  d\overline G_{\ell}(n,\lambda) .
    $$
  
\subsection{Proof of Theorem \ref{thm:stat.dist.alpha>0.beta=lambda0.exp}}\label{app:thm:stat.dist.alpha>0.beta=lambda0.exp}
We exploit Theorem \ref{thm:stat.dist.alpha>0.beta=lambda0} and we immediately find
  $\Pi(0,\{\beta\}) = K_0/\beta$, and for $\lambda\in(\beta,A)$,
  $\Pi(n, d\lambda) = \pi(n,\lambda) d\lambda, n \ge 1$.

By~\eqref{e:stdistycase2.pi}, for $n\ge1$, we have
  \be\label{eq:rec}
  \begin{split}
    \pi(n,\lambda)
    &= \frac{1}{\widehat F(\lambda-\beta)} \int_{\lambda-\beta}^{A-\beta} \frac{1}{\alpha y} \widehat F(y) \widehat p(n-1,y) g(y) dy ,
  \end{split}
  \ee
  with $g(y)=\theta e^{-\theta y}$, and
  \begin{align}
    \widehat F(y) &:= F(y+\beta) e^{\beta/\alpha} = y^{1-\frac{\beta}{\alpha}} e^{-y/\alpha} \label{eq:F}\\
    \widehat p(n-1,y) &:= \frac{1}{g(y)} p(n-1,y+\beta)
    = \int_{[\beta,y+\beta)} u e^{\theta (u-\beta)} \Pi(n-1,du) \ .\label{eq:p}
  \end{align}

  In particular, since by~\eqref{def:p(n-1,lambda).beta=lambdao}, we have, for $\lambda\in[\beta,A)$ and $n>1$ that
  \be
  \begin{split}
    p(n,\lambda)
    &=  \int_{[\beta,\lambda)} u g(\lambda-u) \Pi(n,du) ,
  \end{split}
  \ee
  it follows that
  \begin{align}
    F(\lambda)   &= \widehat F(\lambda-\beta) e^{-\beta/\alpha} \\
    p(n,\lambda) &= \widehat p(n,\lambda-\beta) g(\lambda-\beta) \label{eq:p_hat_p}
  \end{align}

  Summing~\eqref{eq:rec} over $n\ge1$,
  with $\pi(\lambda)=\sum_{n\ge1} \pi(n,\lambda)$
  and $\widehat p(y)=\sum_{n\ge0} \widehat p(n,y)$, we have
  \begin{align}
    \pi(\lambda) &= \frac{1}{\widehat F(\lambda-\beta)} \int_{\lambda-\beta}^{A-\beta} \frac{1}{\alpha y} \widehat F(y)\widehat p(y)g(y)dy ,\label{eq:sum.rec}\\
    \widehat p(y) &=  \int_{[\beta,y+\beta)} u e^{\theta (u-\beta)} \Pi(du) \label{eq:sum.p}
  \end{align}
  and, since $\Pi(d\lambda) = \frac{K_0}{\beta} \delta_{\beta}(d\lambda) + \pi(\lambda) d\lambda$, we have
  \be\label{eq:p.hat}
  \begin{split}
    \widehat p(y)
    &= K_0 + \int_\beta^{y+\beta} u e^{\theta (u-\beta)}  \pi(u) du
  \end{split}
  \ee

  It follows that the continuous density part satisfies the integral equation
  \be\label{eq:pi.integral.eq}
  \begin{split}
    \pi(\lambda)
    =& \frac{K_0}{\widehat F(\lambda-\beta)} \int_{\lambda-\beta}^{A-\beta} \frac{1}{\alpha y} \widehat F(y) g(y) dy\\
    &+ \frac{1}{\widehat F(\lambda-\beta)} \int_{\lambda-\beta}^{A-\beta} \int_\beta^{y+\beta}
    \frac{1}{\alpha y} \widehat F(y)g(y) u e^{\theta (u-\beta)}  \pi(u) du  dy \ .
  \end{split}
  \ee

  Then, by~\eqref{e:odepi1start} with $\widehat p(0,\lambda) = K_0$ (recall Eq. \eqref{eq:p}) and by~\eqref{diff.eq.pi.n.2} for $n\ge2$,  we get, for $n\ge1$, that with $\lambda\in[\beta,A)$ and using Eq. \eqref{eq:p_hat_p},
  \be
  (\lambda - \alpha) \pi(n,\lambda) - \alpha (\lambda-\beta)  \pi'(n,\lambda) = g(\lambda-\beta) \widehat p(n-1,\lambda-\beta)
  \ee

  Summing over $n\ge1$, with $\pi(\lambda)=\sum_{n\ge1} \pi(n,\lambda)$ and $\widehat p(\lambda)=\sum_{n\ge0} \widehat p(n,\lambda)$, we have
  \be\label{eq:pi.diff.eq}
  (\lambda - \alpha) \pi(\lambda) - \alpha (\lambda-\beta) \pi'(\lambda) = g(\lambda-\beta) \widehat p(\lambda-\beta) \ .
  \ee

  Let $h(y) = \widehat F(y) \pi(y+\beta)$, since the factor $\widehat F(y)$ defined in~\eqref{eq:F} satisfies the relation  $\alpha y \widehat F'(y) = -(y+\beta-\alpha) \widehat F(y)$, we have
  \be\label{eq:Fpi}
  \begin{split}
    -\alpha y h'(y)
    &= \widehat F(y) g(y)  \widehat p(y)
    \ .
  \end{split}
  \ee

  By equation~\eqref{eq:p.hat}, we have
  \begin{align}
    \widehat p'(y)
    &= \frac{\theta(y+\beta)}{g(y)}\pi(y+\beta)
    = \frac{\theta (y+\beta) h(y)}{g(y)\widehat F(y)}  \ .
  \end{align}

  By differentiating~\eqref{eq:Fpi} with respect to $y$, we get
  \be
  \begin{split}
    -\alpha y h''(y)
    - \alpha h'(y)
    =&\left(\beta-\alpha+(1+\theta\alpha)y\right) h'(y)
    + \theta(y+\beta) h(y) \ .
  \end{split}
  \ee
  where in the last equality we again used~\eqref{eq:Fpi}.

  After rearranging, we get the second order linear differential equation
  \be\label{eq:second.order.diff.eq}
  \alpha y h''(y)
  +(\beta+(1+\theta\alpha) y) h'(y)
  + \theta(y+\beta) h(y)=0
  \ee

  Let $ h(y)=d(y) g(y)$, we have
  \be
  \begin{split}
    h'(y)
    &=  g(y)(d'(y) - \theta d(y)) \\
    h''(y)
    &=  g(y)(d''(y) - 2\theta d'(y) + \theta^2 d(y)) \ .
  \end{split}
  \ee

  After substituting the expressions for $h'(y)$ and $h''(y)$ in~\eqref{eq:second.order.diff.eq} and divinding by~$g(y)$, we get
  \be\label{eq:second.order.diff.eq.d.1}
  \alpha y d''(y) + (\beta+ (1- \theta \alpha) y ) d'(y) =0
  \ee
  that gives
  \be\label{eq:second.order.diff.eq.d.2}
  \ln'(d'(y)) = - \frac{\beta}{\alpha}\frac{1}{y} - \left(\frac{1}{\alpha}- \theta \right)
  \ee
  with solution
  \be\label{eq:d'.sol}
  \begin{split}
    d'(y)
    &= K_1 \frac{\widehat F(y)}{y g(y)}  \ .
  \end{split}
  \ee
  and finally
  \be\label{eq:d.sol}
  d(y) = K_2 + K_1 \int_0^y \frac{\widehat F(z)}{z g(z)} dz.
  \ee

  Eventually, substituting back, we have
  \be\label{eq:pi.explicit.with.constants}
  \begin{split}
    \pi(\beta+y)
    &= \frac{h(y)}{\widehat F(y)}
    =\frac{g(y)d(y)}{\widehat F(y)}
    = \frac{g(y)}{\widehat F(y)} \left( K_2 + K_1 \int_0^y \frac{\widehat F(z)}{z g(z)} dz \right)  \ .
  \end{split}
  \ee

  Since $\pi(A)=0$, we have that $K_2=-K_1 \int_0^{A-\beta} \frac{\widehat F(z)}{z g(z)} dz$, and we can rewrite~\eqref{eq:pi.explicit.with.constants} as
  \be\label{eq:pi.explicit.one.constant}
  \begin{split}
    \pi(\lambda) &= - K_1\frac{g(\lambda-\beta)}{\widehat F(\lambda-\beta)} \int_{\lambda-\beta}^{A-\beta} \frac{\widehat F(z)}{z g(z)} dz
    \ .
  \end{split}
  \ee

  To determine the constant $K_1$, by~\eqref{eq:pi.diff.eq}, we have
  \be
  \begin{split}
    \theta K_0
    =&  \lim_{\lambda \to \beta^+} \left((\lambda-\alpha)  \pi(\lambda)
    -\alpha (\lambda-\beta)  \pi'(\lambda)\right)
    = -\alpha K_1 
  \end{split}
  \ee
  that gives $K_1= - \theta K_0 /\alpha$. Substituting back in~\eqref{eq:pi.explicit.one.constant}, we have
  \be\label{eq:pi.explicit.final}
  \begin{split}
    \pi(\lambda)
    &= K_0 \frac{\theta }{\alpha} \frac{g(\lambda-\beta)}{\widehat F(\lambda-\beta)} \int_{\lambda-\beta}^{A-\beta} \frac{\widehat F(z)}{z g(z)} dz  \\
    &= K_0 \frac{\theta }{\alpha} \frac{1}{\lambda-\beta} \int_\lambda^A \left(\frac{u-\beta}{\lambda-\beta}\right)^{-\frac{\beta}{\alpha}} e^{-(\frac{1}{\alpha}-\theta)(u-\lambda)} du
    \ .
  \end{split}
  \ee

  From~\eqref{eq:pi.explicit.final} we have, as $\lambda \to A^{\minus}$ that
  \be
  \begin{split}
    \pi(\lambda)
    \approx -(A-\lambda) \pi'(A)
    =
    K_0 \frac{\theta }{\alpha} \frac{A-\lambda}{A-\beta}
  \end{split}
  \ee
  while as  $\lambda \to \beta^+$ that
  \be
  \pi(\lambda)
  \approx K_0 \frac{\theta}{\alpha} \cdot
  \begin{cases}
    K_2  \left(\lambda-\beta\right)^{-\left(1-\frac{\beta}{\alpha}\right)}    &  \alpha > \beta \\
    \frac{\alpha}{\beta }  \log \left(\frac{A-\beta }{\lambda -\beta }\right)      &  \alpha=\beta\\
    \frac{\alpha}{\beta-\alpha} &  \alpha \in (0,\beta)
  \end{cases}
  \ee
  where $K_2=\left(\Gamma\left(1-\frac{\beta}{\alpha}\right)-\Gamma \left(1-\frac{\beta}{\alpha},\left(\frac{1}{\alpha}-\theta\right)(A-\beta)\right)\right)/\left(\frac{1}{\alpha}-\theta\right)^{1-\frac{\beta}{\alpha}}$
  and with $\Gamma(a,z)=\int _z^{\infty} t^{a-1} e^{-t} \, dt$ being the incomplete Gamma function.
  It follows that $\pi(\lambda)$ is integrable on $[\beta,A]$ for all values of $\alpha>0$.

  Now we try to solve for $K_0$ using the normalization condition. We know that $\int_\beta^A \pi(\lambda) d\lambda = 1 - K_0/\beta$, so that
  \be
  \begin{split}
    1 - \frac{K_0}{\beta}
    &= K_0 \frac{\theta }{\alpha} \int_\beta^A  \frac{1}{\lambda-\beta} \int_\lambda^A \left(\frac{u-\beta}{\lambda-\beta}\right)^{-\frac{\beta}{\alpha}} e^{-(\frac{1}{\alpha}-\theta)(u-\lambda)} du \, d\lambda \\
    &= K_0 \frac{\theta }{\alpha} \int_\beta^A \int_\beta^u  \frac{1}{\lambda-\beta} \left(\frac{u-\beta}{\lambda-\beta}\right)^{-\frac{\beta}{\alpha}} e^{-(\frac{1}{\alpha}-\theta)(u-\lambda)} d\lambda \, du
  \end{split}
  \ee
  substitute $\lambda-\beta = (u-\beta) z$, it follows that
  $u-\lambda= (u-\beta) (1-z)$ and therefore $d\lambda = (u-\beta) dz$, so we have, for $\alpha\not=\theta$
  \be
  \begin{split}
    1 - \frac{K_0}{\beta}
    &= K_0 \frac{\theta }{\alpha} \int_\beta^A \int_0^1 z^{\frac{\beta}{\alpha}-1} e^{-(\frac{1}{\alpha}-\theta)(u-\beta)(1-z)} dz \, du \\
    &= K_0 \frac{\theta }{\alpha} \int_0^1 z^{\frac{\beta}{\alpha}-1} \int_\beta^A e^{-(\frac{1}{\alpha}-\theta)(u-\beta)(1-z)} du \, dz \\
    &= K_0 \frac{\theta }{\alpha} \int_0^1 z^{\frac{\beta}{\alpha}-1} \frac{1 -e^{-(\frac{1}{\alpha}-\theta)(A-\beta)(1-z)}}{(\frac{1}{\alpha}-\theta)(1-z)}   dz \\
    &=  K_0 \frac{\theta }{\alpha} K_3
  \end{split}
  \ee
  where
  \be\label{eq:K3.def}
  K_3=\int_0^1  \frac{z^{\frac{\beta}{\alpha}-1}}{1-z} \left(1 -e^{-(\frac{1}{\alpha}-\theta)(A-\beta)(1-z)}\right) dz \ .
  \ee

  It follows that
  \be\label{eq:K0}
  \Pi(\{\beta\})=\frac{K_0}{\beta} = \frac{\alpha}{\alpha+ \beta \theta K_3} \ .
  \ee

  Now,
  \be\label{eq:K3.rep}
  K_3 =
  \begin{cases}
    -{\Kummer}^{(1,0,0)}\left(0,\frac{\beta }{\alpha },\frac{(\alpha  \theta -1) (A-\beta )}{\alpha }\right), & \alpha \not= 1/\theta \\
    \frac{\alpha}{\beta}  \left( A -\beta\right) & \alpha = 1/\theta
  \end{cases}
  \ee
  where ${\Kummer}^{(1,0,0)}$ is the Kummer’s confluent hypergeometric function.

\end{appendix}

\end{document}